\numberwithin{equation}{section}
\newtheoremstyle{thmlemcorr}{10pt}{10pt}{\itshape}{}{\bfseries}{.}{10pt}{{\thmname{#1}\thmnumber{ #2}\thmnote{ (#3)}}}
\newtheoremstyle{thmlemcorr*}{10pt}{10pt}{\itshape}{}{\bfseries}{.}\newline{{\thmname{#1}\thmnumber{ #2}\thmnote{ (#3)}}}
\newtheoremstyle{defi}{10pt}{10pt}{\itshape}{}{\bfseries}{.}{10pt}{{\thmname{#1}\thmnumber{ #2}\thmnote{ (#3)}}}
\newtheoremstyle{remexample}{10pt}{10pt}{}{}{\bfseries}{.}{10pt}{{\thmname{#1}\thmnumber{ #2}\thmnote{ (#3)}}}
\newtheoremstyle{ass}{10pt}{10pt}{}{}{\bfseries}{.}{10pt}{{\thmname{#1}\thmnumber{ A#2}\thmnote{ (#3)}}}
\theoremstyle{thmlemcorr}
\newtheorem{theorem}{Theorem}
\numberwithin{theorem}{section}
\newtheorem{lemma}[theorem]{Lemma}
\newtheorem{corollary}[theorem]{Corollary}
\newtheorem{proposition}[theorem]{Proposition}
\newtheorem{problem}[theorem]{Problem}
\theoremstyle{thmlemcorr*}
\newtheorem{theorem*}{Theorem}
\newtheorem{lemma*}[theorem]{Lemma}
\newtheorem{corollary*}[theorem]{Corollary}
\newtheorem{proposition*}[theorem]{Proposition}
\newtheorem{problem*}[theorem]{Problem}
\newtheorem{conjecture*}[theorem]{Conjecture}
\theoremstyle{defi}
\newtheorem{definition}[theorem]{Definition}
\theoremstyle{remexample}
\newtheorem{remark}[theorem]{Remark}
\newtheorem{example}[theorem]{Example}
\theoremstyle{ass}
\newcommand{\Crm}{\mathrm{C}}
\newcommand{\Lrm}{\mathrm{L}}
\newcommand{\Wrm}{\mathrm{W}}
\newcommand{\Acal}{\mathcal{A}}
\newcommand{\Fcal}{\mathcal{F}}
\newcommand{\Hcal}{\mathcal{H}}
\newcommand{\Jcal}{\mathcal{J}}
\newcommand{\Lcal}{\mathcal{L}}
\newcommand{\Mcal}{\mathcal{M}}
\newcommand{\Scal}{\mathcal{S}}
\newcommand{\Fbf}{\mathbf{F}}
\newcommand{\Mbf}{\mathbf{M}}
\newcommand{\Rbf}{\mathbf{R}}
\newcommand{\Ybf}{\mathbf{Y}}
\newcommand{\Abb}{\mathbb{A}}
\newcommand{\Cbb}{\mathbb{C}}
\newcommand{\Gbb}{\mathbb{G}}
\newcommand{\Hbb}{\mathbb{H}}
\newcommand{\Jbb}{\mathbb{J}}
\newcommand{\Sbb}{\mathbb{S}}
\DeclareMathOperator{\id}{id}
\DeclareMathOperator{\img}{img}
\DeclareMathOperator*{\wlim}{w-lim}
\DeclareMathOperator*{\wslim}{w*-lim}
\DeclareMathOperator{\supmod}{sup}
\DeclareMathOperator{\diverg}{div}
\DeclareMathOperator{\curl}{curl}
\DeclareMathOperator{\dist}{dist}
\DeclareMathOperator{\rank}{rank}
\DeclareMathOperator{\spn}{span}
\DeclareMathOperator{\supp}{supp}
\newcommand{\ee}{\mathrm{e}}
\newcommand{\ii}{\mathrm{i}}
\newcommand{\set}[2]{\left\{\, #1 \ \ \textup{\textbf{:}}\ \ #2 \,\right\}}
\newcommand{\setb}[2]{\bigl\{\, #1 \ \ \textup{\textbf{:}}\ \ #2 \,\bigr\}}
\newcommand{\setB}[2]{\Bigl\{\, #1 \ \ \textup{\textbf{:}}\ \ #2 \,\Bigr\}}
\newcommand{\setBB}[2]{\biggl\{\, #1 \ \ \textup{\textbf{:}}\ \ #2 \,\biggr\}}
\newcommand{\norm}[1]{\|#1\|}
\newcommand{\normb}[1]{\bigl\|#1\bigr\|}
\newcommand{\abs}[1]{|#1|}
\newcommand{\absb}[1]{\bigl|#1\bigr|}
\newcommand{\absBB}[1]{\biggl|#1\biggr|}
\newcommand{\floor}[1]{\lfloor #1 \rfloor}
\newcommand{\dpr}[1]{\langle #1 \rangle}
\newcommand{\dprb}[1]{\bigl\langle #1 \bigr\rangle}
\newcommand{\dprBB}[1]{\biggl\langle #1 \biggr\rangle}
\newcommand{\ddpr}[1]{\langle\!\langle #1 \rangle\!\rangle}
\newcommand{\ddprb}[1]{\bigl\langle\hspace{-2.5pt}\bigl\langle #1 \bigr\rangle\hspace{-2.5pt}\bigr\rangle}
\newcommand{\cl}[1]{\overline{#1}}
\newcommand{\di}{\mathrm{d}}
\newcommand{\dd}{\;\mathrm{d}}
\newcommand{\N}{\mathbb{N}}
\newcommand{\B}{\mathbb{B}}
\newcommand{\R}{\mathbb{R}}
\newcommand{\C}{\mathbb{C}}
\newcommand{\loc}{\mathrm{loc}}
\newcommand{\sym}{\mathrm{sym}}
\newcommand{\ONE}{\mathbbm{1}}
\newcommand{\toweak}{\rightharpoonup}
\newcommand{\toweakstar}{\overset{*}\rightharpoondown}
\newcommand{\toup}{\uparrow}
\newcommand{\todown}{\downarrow}
\newcommand{\embed}{\hookrightarrow}
\newcommand{\conv}{\star}
\newcommand{\BigO}{\mathrm{\textup{O}}}
\newcommand{\SmallO}{\mathrm{\textup{o}}}
\newcommand{\sbullet}{\begin{picture}(1,1)(-0.5,-2.5)\circle*{2}\end{picture}}
\newcommand{\frarg}{\,\sbullet\,}
\newcommand{\MCF}{\mathbf{MCF}}
\newcommand{\WF}{\mathrm{WF}}
\newcommand{\eps}{\epsilon}
\newcommand{\term}[1]{\textbf{#1}}
\newcommand{\proofstep}[1]{\textit{#1}}
\def\Xint#1{\mathchoice 
{\XXint\displaystyle\textstyle{#1}}%
{\XXint\textstyle\scriptstyle{#1}}%
{\XXint\scriptstyle\scriptscriptstyle{#1}}%
{\XXint\scriptscriptstyle\scriptscriptstyle{#1}}%
\!\int} 
\def\XXint#1#2#3{{\setbox0=\hbox{$#1{#2#3}{\int}$} 
\vcenter{\hbox{$#2#3$}}\kern-.5\wd0}} 
\def\dashint{\,\Xint-}
\newcommand{\restrict}{\begin{picture}(10,8)\put(2,0){\line(0,1){7}}\put(1.8,0){\line(1,0){7}}\end{picture}}
\renewcommand{\eps}{\varepsilon}
\renewcommand{\epsilon}{\varepsilon}
\renewcommand{\phi}{\varphi}
\begin{document}


\title[Microlocal compactness forms]{Directional oscillations, concentrations, and\\compensated compactness via\\microlocal compactness forms}

\author{Filip Rindler}
\address{Mathematics Institute, University of Warwick, Coventry CV4 7AL, United Kingdom, and University of Cambridge (on leave), Gonville \& Caius College, Trinity Street, Cambridge CB2 1TA, United Kingdom.}
\email{F.Rindler@warwick.ac.uk}

\begin{abstract}
This work introduces microlocal compactness forms (MCFs) as a new tool to study oscillations and concentrations in $\mathrm{L}^p$-bounded sequences of functions. Decisively, MCFs retain information about the location, value distribution, and direction of oscillations and concentrations, thus extending at the same time the theories of (generalized) Young measures and H-measures. In $\mathrm{L}^p$-spaces oscillations and concentrations precisely discriminate between weak and strong compactness, and thus MCFs allow one to quantify the difference in compactness. The definition  of MCFs involves a Fourier variable, whereby also differential constraints on the functions in the sequence can be investigated easily---a distinct advantage over Young measure theory. Furthermore, pointwise restrictions are reflected in the MCF as well, paving the way for applications to Tartar's framework of compensated compactness; consequently, we establish a new weak-to-strong compactness theorem in a ``geometric'' way. After developing several aspects of the abstract theory, we consider three applications: For lamination microstructures, the hierarchy of oscillations is reflected in the MCF. The directional information retained in an MCF is harnessed in the relaxation theory for anisotropic integral functionals. Finally, we indicate how the theory pertains to the study of propagation of singularities in certain systems of PDEs. The proofs combine measure theory, Young measures, and harmonic analysis.
\vspace{4pt}

\noindent\textsc{MSC (2010): 28B05 (primary); 35B05, 35A27, 49J45, 35L67.} 

\noindent\textsc{Keywords:} Microlocal compactness form, MCF, oscillation, concentration, microlocal analysis, relaxation, microstructure, laminate, compensated compactness, hyperbolic system, propagation of singularities.

\vspace{4pt}

\noindent\textsc{Date:} \today{} (version 3.0).
\end{abstract}




\maketitle



\section{Introduction}

One of the major challenges in the analysis of nonlinear PDEs is to understand oscillations and concentrations in weakly converging sequences of functions since these phenomena distinguish weak from strong convergence in $\Lrm^p$-spaces. In its most simple form, this observation is already expressed in Vitali's classical convergence theorem, by which a norm-bounded, non-oscillating (contained in the requirement of convergence in measure), non-concentrating (equiintegrable) sequence is seen to converge strongly. This connection between \enquote{abstract} compactness and \enquote{concrete} oscillation and concentration effects also supports the view that questions of compactness have a \enquote{physical} meaning and should be studied for their own sake, cf.~\cite{Tart79CCAP,Tart09GTH}.

Another motivation for such a study is that often sequences of interest are constrained by a linear differential constraint (curl-freeness for a sequence of gradients for instance) and/or pointwise constraints. Then, compensated compactness theory, see for example~\cite{Mura78CPC,Mura79CPC2,Tart79CCAP,Gera88CCRD,Gera91MDM,JoMeRa95TCCN,Tart09GTH}, considers whether these additional restrictions allow one to improve weak to strong compactness; in fact, in this vein, the term \enquote{compensated compactness} should probably be replaced by \enquote{compactness by compensation} (which is also closer to the French original \enquote{compacit\'{e} par compensation}), but we here stick to the more conventional terminology. At the heart of this investigation is an analysis of \emph{compatibility} between these constraints and different shapes of oscillations and concentrations. If together the constraints rule out any such phenomena, then we indeed have the sought improvement of weak to strong compactness.

Starting with simple scalar defect measures, more and more refined tools have been developed to study these questions. The basic philosophy here is to retain as much information as possible about the sequence in an \enquote{extended limit} because weak convergence always involves a lossy averaging operation. Perhaps the most well-known objects of this kind are Young measures, introduced by Young in~\cite{Youn42GSCV_1,Youn42GSCV_2}. They represent the asymptotic value distribution of oscillations in sequences: A bounded sequence $(u_j) \subset \Lrm^p(\Omega;\C^N)$ ($1 \leq p \leq \infty$) allows for the selection of a subsequence (not relabeled) that generates a Young measure $(\nu_x)_{x\in\Omega}$, defined to be a parametrized family of probability measures on $\C^N$, with the property that
\[
  \int_\Omega F(x,u_j(x)) \dd x  \to
  \int_\Omega \int F(x,z) \dd \nu_x(z) \dd x
\]
for any Carath\'{e}odory integrand $F \colon \Omega \times \C^N \to \R$ such that the sequence $(F(x,u_j(x)))_j$ is equiintegrable. Intuitively, the family $(\nu_x)_x$ contains the asymptotic value distribution of the sequence $(u_j)$ and thus reflects oscillations.

As an important extension, DiPerna and Majda~\cite{DiPMaj87OCWS} compactified the target space to describe concentration effects as well. The generalized Young measure (or DiPerna--Majda measure) generated by $(u_j)$ now adds to the classical Young measure $(\nu_x)_{x\in\Omega}$ a positive Radon measure $\lambda_\nu$ on $\cl{\Omega}$ and another parametrized family $(\nu_x^\infty)_{x\in \cl{\Omega}}$ of probability measures on the unit sphere $\partial \C\B^N$ of $\C^N$ ($\C\B^N$ always denotes the unit ball in $\C^N$). It allows us to compute limits of a larger class of nonlinear quantities with $p$-growth:
\[
  \int_\Omega F(x,u_j(x)) \dd x  \to
  \int_\Omega \int F(x,\frarg) \dd \nu_x \dd x + \int_{\cl{\Omega}} \int F^\infty(x,\frarg) \dd \nu_x^\infty \dd \lambda_\nu(x)
\]
whenever $F$ has at most growth of order $p$ and the $p$-recession function $F^\infty$ (describing the behavior of $F$ at infinity) exists in a sufficiently strong sense:
\[
  F^\infty(x,z) := \lim_{\substack{\!\!\!\! x' \to x \\ \!\!\!\! z' \to z \\ \; t \to \infty}} \frac{F(x',tz')}{t^p},
  \qquad x \in \cl{\Omega}, \; z \in \C^N.
\]
Here, $\lambda_\nu$ and $(\nu_x^\infty)$ respectively describe the location and value distribution of concentration effects in the sequence. Recent works on this topic include~\cite{AliBou97NUIG,KruRou97MDM,Sych00CHGY,KriRin10CGGY,Rind14LPCY,Rind12LSYM}.

A different set of ideas, which could be considered an \enquote{\mbox{$\Lrm^2$-microlocal} analysis}, is expressed in Tartar's theory of H-measures~\cite{Tart90HMNA} or, equivalently, G\'{e}rard's microlocal defect measures~\cite{Gera91MDM}. Switching to a phase-space or microlocal perspective, these objects enable us to quantify not only the location but also the \emph{direction} of oscillations and concentrations. Unfortunately, H-measures are not able to retain the asymptotic value distribution expressed in the Young measure, and are furthermore restricted to the \mbox{$\Lrm^2$-setting}; however, some extensions to an $\Lrm^p$-setting are possible, see~\cite{AntMit11HDEH}.

The question of whether it is possible to combine the complementary approaches of Young measures and H-measures and to find a tool allowing one to study \emph{at the same time} the location, value distribution, and direction of oscillations and concentration effects in $\Lrm^p$-bounded sequences of functions, has remained open for some time (cf.\ Chapters~38,~39 in~\cite{Tart06INSE} and~\cite{Tart95BYM} for some remarks on this problem).

The principal aim of this work is to introduce precisely such a tool, called the \term{microlocal compactness form} (MCF) generated by an $\Lrm^p$-bounded sequence, where $p \in (1,\infty)$. The following are some of its main features:
\begin{itemize}
  \item MCFs allow one to represent limits for a large class of nonlinear integral functionals; in particular, the Young measure is (essentially) contained.
  \item The directional information about oscillations and concentrations in the generating sequence is preserved and the H-measure can be reconstructed (in an $\Lrm^2$-setting).
  \item Differential and pointwise constraints on the sequence are reflected in the generated MCF and in fact hold for \emph{any} generating sequence (contrary to the situation for Young measures, for which generating sequences can have very different properties). Consequently, MCFs provide new \enquote{geometric} proofs in compensated compactness theory.
  \item MCFs can be defined for all $\Lrm^p$-spaces, $p \in (1,\infty)$.
  \item They enjoy the same compactness properties as the weak convergence, i.e.\ $\Lrm^p$-boundedness is enough for the extraction of a generating subsequence.
  \item The $p$-compactness wavefront set of an MCF is defined as the analogue of the wavefront set in classical microlocal analysis, but with respect to weak--strong compactness and not $\Crm^\infty$-regularity like in the classical notion.
  \item When considering a laminate (nested microstructure), its hierarchy is reflected in the generated MCF.
  \item MCFs contain enough information to serve as the foundation of a relaxation theory for integral functionals in the presence of anisotropy.
  \item For some hyperbolic systems, one can derive equations for MCFs that express the propagation of singularities.
\end{itemize}

Since we are interested in directional properties of oscillations and concentrations, the definition of microlocal compactness forms involves the Fourier transform. In fact, it is well-known that by switching to a Fourier point of view we can understand the intuition behind weak convergence phenomena much better: Fourier-transforming a sequence of functions $u_j \toweak u$ in $\Lrm^2$ that oscillate with increasingly higher frequencies (e.g.\ $u_j(x) = \sin(jx)$), one observes that the Fourier transforms $\hat{u}_j$ push mass out to infinity. Applying Parseval's theorem, the duality pairings with a test function $\phi \in \Lrm^2$ can be written as
\[
  \int \phi \cdot \overline{u_j} \dd x = \int \hat{\phi} \cdot \overline{\hat{u}_j} \dd \xi.
\]
Since $\hat{\phi}$ decays at infinity, integrating $\hat{u}_j$ against $\hat{\phi}$ corresponds to \enquote{band-limiting} the sequence $(u_j)$ and consequently, infinitely high frequencies are lost from the weak limit $u$. These considerations also persist for concentration effects, which are highly localized in space, and hence, by the uncertainty principle, are strongly smeared out in Fourier space. The band-limiting effect of a test function then implies that this Fourier mass is again lost in the limit. We conclude from this informal discussion that a tool aiming to improve the study of oscillations and concentrations must look at \enquote{infinite frequencies} only---microlocal compactness forms will do just that.

Another conclusion that can be drawn from this heuristic point of view on weak convergence is that for \emph{infinite} frequencies it is possible to \enquote{circumvent} the uncertainty principle for the Fourier transform: Localizing in real space corresponds to smearing out the Fourier transform, but when a packet of Fourier mass wanders out to infinity, it does not matter how much smeared out it is (we always look at conical neighborhoods around directions). This simple observation will enable us to construct a theory fulfilling all the aforementioned properties.

After these preliminary considerations, we can now state the main idea of microlocal compactness forms: Consider for a bounded sequence $(u_j) \subset \Lrm^p(\Omega;\C^N)$ the double limit
\begin{equation} \label{eq:omega_lim}
  \lim_{R\to\infty} \lim_{j\to\infty} \int_\Omega h(\frarg,u_j) \cdot \overline{T_{(1-\eta_R)\Psi}[u_j]} \dd x.
\end{equation}
Here, $h \colon \cl{\Omega} \times \C^N \to \C^N$, is a test function with sufficient regularity and we further require that its recession function $h^\infty$ exists in the sense
\[
  h^\infty(x,z) := \lim_{\substack{\!\!\!\! x' \to x \\ \!\!\!\! z' \to z \\ \; t \to \infty}} \frac{h(x',tz')}{t^{p-1}},
  \qquad x \in \cl{\Omega}, \; z \in \C^N.
\]
This choice for the class of test functions will capture the location and value distribution of oscillations and concentrations. Most often, we will not refer to $h$ directly, but only through $f(x,z,q) = h(x,z) \cdot q$ for reasons of consistency with the notation for generalized Young measures. Next, $\Psi \colon \R^d \setminus \{0\} \to \C^{N \times N}$ is a positively $0$-homogeneous Fourier multiplier with sufficient smoothness, serving to represent directional information as well as differential constraints. The family $\{\eta_R\}_{R > 0}$ consists of smooth functions with $\eta_R \equiv 1$ on the ball $B(0,R)$ and support inside $B(0,2R)$, whose purpose it is to cut off all finite frequencies. Finally, by $T_{(1-\eta_R)\Psi}$ we denote the Fourier multiplier operator with symbol $\xi \mapsto (1-\eta_R(\xi)) \Psi(\xi/\abs{\xi})$.

The fundamental existence theorem in Section~\ref{ssc:def} will establish that for all admissible $f,\Psi$ the limit in~\eqref{eq:omega_lim} can be expressed as
\[
 \int_\Omega \dprb{h(x,\frarg) \otimes \overline{\Psi},\omega_x} \dd x + \int_{\cl{\Omega}} \dprb{h^\infty(x,\frarg) \otimes \overline{\Psi},\omega_x^\infty} \dd \lambda_\omega(x) =: \ddprb{f \otimes \overline{\Psi},\omega},
\]
which is a sesquilinear form in $f$ and $\Psi$. Here, $(\omega_x)_{x\in\Omega}$ and $(\omega_x^\infty)_{x\in\cl{\Omega}}$ are parametrized sesquilinear forms and $\lambda_\omega$ is a finite Borel measure on $\cl{\Omega}$. The first part corresponds to oscillations, the second part to concentrations in the generating sequence $(u_j)$. In the course of this paper we will show that this object indeed quantifies the difference between weak and strong compactness of the sequence $(u_j)$ and has all the properties alluded to before. In particular, we will consider applications to compensated compactness theory (Section~\ref{ssc:comp_compact}), laminates (Section~\ref{ssc:laminates}), relaxation (Section~\ref{ssc:relax}), and to the propagation of singularities in hyperbolic systems (Section~\ref{ssc:propagation}).

Besides the theories of Young measures and H-measures, also classical microlocal analysis has served as inspiration for the present work. Following the groundbreaking work by H\"{ormander}, Kohn, Nirenberg (see Chapter~VIII of~\cite{Horm90ALPD1} for historical references), this theory has been very successfully applied to linear PDEs~\cite{Horm94ALPD3,Tayl11PDE2} and to a degree also to nonlinear PDEs~\cite{Tayl91PONP,Tayl11PDE3}. However, for nonlinear equations, microlocal analysis suffers from the defect that it measures $\Crm^\infty$-regularity, which is often not well adapted to nonlinear PDEs.

This paper is organized as follows: After recalling some preliminaries and fixing notation in Section~\ref{sc:setup}, we move to the heart of the matter in Section~\ref{sc:MCF} and prove the main existence theorem as well as many basic properties of MCFs, including a result on how the information of both the Young measure and the H-measure can be extracted from an MCF. We consider concrete oscillations and concentrations in Section~\ref{sc:osc_conc} and give a few basic examples. Then, Section~\ref{sc:diff_constr} examines how differential constraints on a sequence are reflected in the generated MCF and establishes a general compensated compactness theorem. Finally, Section~\ref{sc:appl} concludes the paper by considering the applications mentioned above.

\section*{Acknowledgements}

The author wishes to thank, among others, Giovanni Alberti, Camillo De Lellis, Sebastian Heinz, Dorothee Knees, Jan Kristensen, Alexander Mielke, Ayman Moussa, Stefan M\"{u}ller, and Richard Nickl for stimulating discussions related to the topic of the paper, partly during the Oberwolfach workshops on \enquote{Variational Methods for Evolution} in December 2011 and on the \enquote{Calculus of Variations} in July 2012.

\section{Setup and preliminaries} \label{sc:setup}

\subsection{General notation}

By $B(x_0,r)$ we denote the open ball around a point $x_0$ with radius $r > 0$; the special notation $\C\B^N$ will be used for the unit ball in $\C^N$. The unit sphere in $\C^N$ is $\partial \C\B^N$. We reserve the notation $\Sbb^{d-1}$ for the (real) unit sphere in $\R^d$. The scalar product $a \cdot b$ between two vectors $a,b \in \C^N$ will always be defined as $a \cdot b := \sum_k a^k b^k$ and we write out any complex conjugation in the second variable.

Often, we use the space $\C^{m \times d}$ of all $(m \times d)$-matrices (or restricted to $\R^{m \times d}$) in place of $\C^N$. We equip this matrix space with the Hilbert structure generated by the scalar product $A : B := \sum_{i,j} A^i_j B^i_j$, whereby we obtain the Frobenius norm $\abs{A} = (\sum_{i,j} \abs{A^i_j})^{1/2}$. The open unit ball and unit sphere with respect to this norm are denoted by $\C\B^{m \times d}$ and $\partial \C\B^{m \times d}$, respectively. As usual, $A^* := \overline{A^T}$.

The space of all $\C^N$-valued (vector) Radon measures on a Borel set $U \subset \R^d$ is denoted by $\Mbf(U;\C^N)$, the sets of positive Radon measures and probability measures are $\Mbf^+(U)$ and $\Mbf^1(U)$, respectively. We use the Lebesgue spaces $\Lrm^p(\Omega)$, $\Lrm^p(\Omega;\C^N)$, $\Lrm^1(U,\mu;\C^N)$, \ldots (where $\Omega \subset \R^d$ and $\mu \in \Mbf^+(\Omega)$ is a positive measure on $\Omega$), and the Sobolev spaces $\Wrm^{1,p}(\Omega),\Wrm^{1,p}(\Omega;\R^m)$, \ldots with their usual meanings; the corresponding norms are $\norm{\frarg}_p$, $\norm{\frarg}_{1,p}$, \ldots. Continuous and continuously differentiable functions are contained in the spaces $\Crm(\Omega),\Crm^1(\Omega), \Crm^\infty(\Omega)$ with norms $\norm{\frarg}_\infty, \norm{\frarg}_{1,\infty}$. The spaces \enquote{$\Crm_0$} and \enquote{$\Crm_c$} additionally entail that the contained functions vanish at the boundary/infinity or have compact support.

\subsection{Sphere compactifications} \label{ssc:sphere_compact}

In order to capture the behavior of functions defined on $\C^N$ at infinity we need to consider a suitable compactification of $\C^N$. The sphere compactification to be defined shortly, has turned out to be a good compromise between simplicity and generality and is used widely, see for example Chapter~32 of~\cite{Tart09GTH} or~\cite{FerGer03LZFN} in an H-measure context and~\cite{DiPMaj87OCWS,AliBou97NUIG} in the context of (generalized) Young measures. In contrast to the one-point compactification $\C^N \uplus \{\infty\}$ (\enquote{$\uplus$} refers to the disjoint union) it retains information about the \enquote{directions of infinity}. On the other hand, it is still sufficiently concrete to be useful in our analysis (but more general compactifications up to the Stone--\v{C}ech compactification $\beta \C^N$ could in principle also be of interest).

Formally, the \term{sphere compactification} $\sigma\C^N$ of $\C^N$ is the set
\[
  \sigma\C^N := \C^N \uplus \infty\partial \C\B^N,
  \qquad\text{where}\qquad
  \infty\partial \C\B^N := \setb{ \infty e }{ e \in \partial \C\B^N },
\]
with the topology such that $\sigma \C^N$ is homeomorphic to the closed unit ball $\cl{\C\B^N}$ under 
\[
  z \in \sigma \C^N \mapsto w(z) := \begin{cases}
    \displaystyle\frac{z}{1+\abs{z}}  &\text{if $z \in \C^N$,} \\
    e                                 &\text{if $z = \infty e \in \infty\partial \C\B^N$.}
  \end{cases}
\]

\subsection{Fourier transforms} \label{ssc:Fourier}

We follow~\cite{Graf08CFA,Stei93HA}, where also proofs of the following facts can be found, and define the Fourier transform and the inverse Fourier transform of functions $u,v \in \Lrm^1(\R^d)$ via
\begin{align*}
  \hat{u}(\xi) &= \Fcal[u](\xi) := \int u(x) \ee^{-2 \pi\ii x \cdot \xi} \dd x,
    \qquad \xi \in \R^d, \\
  \check{v}(x) &= \Fcal^{-1}[v](x) := \int v(\xi) \ee^{2 \pi\ii x \cdot \xi} \dd \xi,
    \qquad x \in \R^d.
\end{align*}

We observe the following fact, which will be needed in the proof of Lemma~\ref{lem:osc}: If $w \in \Scal'(\R)$ is a tempered distribution on $\R$, define the \term{standing wave} $v \in \Scal'(\R^d)$ with profile $w$ and direction $n_0 \in \Sbb^{d-1}$ by
\[
  \dprb{f,v} := \int_{\{x^\perp \perp n_0\}} \dprb{w,s \mapsto f(sn_0 + x^\perp)} \dd x^\perp,
  \qquad f \in \Scal(\R^d).
\]
For its Fourier transform $\hat{v} \in \Scal'(\R^d)$ we get $\hat{v}(\xi) = \hat{w}(s) \otimes \delta_0(\xi^\perp)$ for $\xi = sn_0 \oplus \xi^\perp$ ($\xi^\perp \perp n_0$), that is,
\[
  \dprb{f,\hat{v}} = \dprb{\hat{w},s \mapsto f(sn_0)},
  \qquad f \in \Scal(\R^d).
\]
As a particular case we have that if $\hat{w} \in \Lrm_\loc^1(\R)$, then $\hat{v} = \hat{w} \, \Hcal^1 \restrict \R n_0$, where $\R n_0$ is the line through the origin in direction $n_0$. For the proof consider without loss of generality the case $n_0 = e_1$ and choose $f$ to be a tensor product of test functions. Then use the fact that the Fourier transform of a tensor product is the tensor product of the Fourier transforms and conclude by virtue of $\Fcal[\Lcal^d] = \delta_0$.

\subsection{Multipliers} \label{ssc:multipliers}

Let $\psi \in \Crm^{\floor{d/2} + 1}(\R^d \setminus \{0\};\C)$ satisfy Mihlin's condition
\begin{equation} \label{eq:psi_multiplier_est}
  \abs{\partial^\alpha \psi(\xi)} \leq M \abs{\xi}^{-\abs{\alpha}},
  \qquad \xi \in \R^d \setminus \{0\},
\end{equation}
for all multi-indices $\alpha \in \N_0^d$ with $\abs{\alpha} := \abs{\alpha_1} + \cdots + \abs{\alpha_d} \leq \floor{d/2} + 1$ and a constant $M \geq 0$. Define $\norm{\psi}_\Mcal$ to be the smallest $M \geq 0$ for which~\eqref{eq:psi_multiplier_est} holds. It follows from the Mihlin multiplier theorem, see for instance Theorem~5.2.7 in~\cite{Graf08CFA}, that the corresponding multiplier operator
\[
  T_\psi[u] := \Fcal^{-1}[\psi\Fcal[u]],  \qquad u \in \Scal(\R^d),
\]
has a bounded extension from $\Lrm^p(\R^d)$ to itself, where here and in the following $p \in (1,\infty)$, and
\begin{equation} \label{eq:T_psi_est}
  \norm{T_\psi u}_p \leq C_d \max\{p,(p-1)^{-1}\} \norm{\psi}_\Mcal \norm{u}_p,
  \qquad u \in \Lrm^p(\R^d),
\end{equation}
with a dimensional constant $C_d > 0$.

Define the set $\Mcal(\Sbb^{d-1})$ to contain all $\psi \in \Crm^{\floor{d/2} + 1}(\R^d \setminus \{0\};\C)$ that additionally are positively $0$-homogeneous. Owing to this homogeneity, all such $\psi$ automatically satisfy the estimate~\eqref{eq:psi_multiplier_est} with $M = \sup_{\abs{\alpha} \leq \floor{d/2} + 1} \norm{\partial^\alpha \psi}_\infty$. Also, $\norm{\psi}_\Mcal$ is a norm on the space $\Mcal(\Sbb^{d-1})$.

We will also continually use the space $\Mcal(\Sbb^{d-1};\C^{N \times N})$ of $\C^{N \times N}$-valued multipliers, which we usually abbreviate by $\Mcal$. The associated multiplier operators $T_\Psi$, $\Psi \in \Mcal$, are defined as before, but with $\Psi(\xi)$ acting on $\hat{u}(\xi)$ via matrix-vector multiplication (if $u$ is a rapidly decaying function and by duality otherwise). These matrix-multipliers satisfy the same boundedness assertions as before. We consider scalar-valued $\psi \in \Mcal(\Sbb^{d-1})$ to be part of $\Mcal(\Sbb^{d-1};\C^{N \times N})$, identifying $\psi$ with $\Psi := \psi I_{N \times N}$, where $I_{N \times N}$ the $(N \times N)$-identity matrix.

\begin{remark}
Alternatively, one can view $T_\psi$ as the convolution (or singular integral) operator with the tempered distribution $\check{\psi}$. Since $\psi(\frarg/\abs{\frarg})$ is positively $0$-homogeneous, we may appeal to Proposition~2.4.7 in~\cite{Graf08CFA} to represent
\[
  T_\psi = \gamma f + W_\Omega \conv f,
\]
where $\gamma \in \C$ and $W_\Omega$ is given as the tempered distribution
\[
  \dprb{W_\Omega,\phi} := \lim_{\eps \to 0} \int_{\abs{x}\geq \eps} \frac{\Omega(x/\abs{x})}{\abs{x}^d} \phi(x) \dd x
\]
for a $\Crm^\infty$-function $\Omega$ on $\Sbb^{d-1}$ with vanishing integral.
\end{remark}

The following result will be used countless times in the sequel:

\begin{lemma}[Commutation] \label{lem:commutation}
Let $\phi \in \Crm_c(\R^d)$ and $\Psi \in \Mcal$. Define
\[
  M_\phi v := \phi \cdot v,  \qquad
  T_\Psi v := \Fcal^{-1}\bigl[ \Psi(\frarg/\abs{\frarg})\Fcal[v] \bigr],
  \qquad v \in \Lrm^2(\R^d;\C^N),
\]
to be the multiplication and Fourier multiplier operators, respectively. Then, the commutator
\[
  [M_\phi,T_\Psi] := M_\phi T_\Psi - T_\Psi M_\phi,
\]
is smoothing of order 1, that is, $[M_\phi,T_\Psi]$ extends to a bounded operator from $\Wrm^{-1,p}(\R^d;\C^N)$ to $\Lrm^p(\R^d;\C^N)$ for all $p \in (1,\infty)$. In particular, $[M_\phi,T_\Psi]$ is compact as an operator from $\Lrm^p(\R^d;\C^N)$ to itself.
\end{lemma}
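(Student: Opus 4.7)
The plan is to exploit the cancellation $\phi(x)-\phi(y)=O(|x-y|)$ inside the kernel of the commutator, which reduces the singularity of the Calder\'on--Zygmund kernel of $T_\Psi$ by one order and so produces smoothing. First I would represent $T_\Psi$ by its Schwartz kernel $K_\Psi \in \Scal'(\R^d;\C^{N\times N})$, a tempered distribution homogeneous of degree $-d$ away from the origin with angular profile determined by $\Psi$ (as in the remark preceding the lemma). Evaluating both $M_\phi T_\Psi u$ and $T_\Psi M_\phi u$ on a Schwartz function $u$ and using Fubini, one finds
\[
  ([M_\phi,T_\Psi]u)(x) = \mathrm{p.v.}\!\int \bigl(\phi(x)-\phi(y)\bigr)K_\Psi(x-y)\,u(y)\,\di y,
\]
where the local $\delta$-contribution at $y=x$ drops out because $\phi(x)-\phi(y)$ vanishes on the diagonal.

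Next, assuming $\phi$ has Lipschitz regularity (which is what is genuinely needed for the gain of a derivative; approximation handles merely continuous $\phi$ at the end for the compactness statement), I would establish the algebraic identity
\[
  [M_\phi,T_\Psi]\partial_i v = T_\Psi\bigl(M_{\partial_i \phi}\, v\bigr) + C_i[\phi]\,v,\qquad i=1,\ldots,d,
\]
where $C_i[\phi]$ is the integral operator with kernel $(\phi(x)-\phi(y))\,\partial_{x_i} K_\Psi(x-y)$. A one-line Fourier check confirms both sides evaluate at frequency $\xi$ to $\int \hat\phi(\xi-\eta)\bigl(\Psi(\eta/|\eta|)-\Psi(\xi/|\xi|)\bigr)\cdot 2\pi\ii\,\eta_i\,\hat v(\eta)\,\di\eta$. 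The kernel of $C_i[\phi]$ has size $O(|x-y|^{-d})$ because the Lipschitz bound on $\phi$ cancels one order of the $(-d-1)$-homogeneity of $\partial_i K_\Psi$, and it inherits the angular cancellation of $\Psi$ (via the Mihlin regularity in $\Mcal$), so $C_i[\phi]$ is a Calder\'on--Zygmund operator and therefore bounded on every $\Lrm^p$, $p\in(1,\infty)$. Decomposing $u\in\Wrm^{-1,p}$ as $u = u_0 + \sum_i \partial_i u_i$ with $\|u_0\|_p + \sum_i \|u_i\|_p \leq C\|u\|_{\Wrm^{-1,p}}$, and combining the identity with Mihlin's bound \eqref{eq:T_psi_est} and the $\Lrm^p$-boundedness of $C_i[\phi]$, one gets
\[
  \| [M_\phi,T_\Psi]u \|_{\Lrm^p} \leq C\bigl(\|\phi\|_\infty+\|\nabla\phi\|_\infty\bigr)\,\|\Psi\|_\Mcal\,\|u\|_{\Wrm^{-1,p}},
\]
which is the claimed smoothing of order $1$.

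For the compactness statement, I would note that the smoothing bound together with its dual analogue (the adjoint of $[M_\phi,T_\Psi]$ is again a commutator of the same form with $\Psi$ replaced by a related symbol in $\Mcal$) shows that $[M_\phi,T_\Psi]$ maps $\Lrm^p$ boundedly into $\Wrm^{1,p}$. Bounded sequences in $\Lrm^p$ are thus mapped to $\Wrm^{1,p}$-bounded, hence translation-equicontinuous, families in $\Lrm^p$. Moreover, because $\phi$ has compact support, only $T_\Psi(\phi u)$ contributes outside a fixed ball, and the kernel bound $|K_\Psi(x-y)|\leq C|x|^{-d}$ for $|x|\gg\diam\supp\phi$, $y\in\supp\phi$, gives $|T_\Psi(\phi u)(x)|\leq C\|u\|_p|x|^{-d}$ uniformly in $u$ on bounded sets. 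The Kolmogorov--Riesz--Fr\'echet criterion then yields compactness of $[M_\phi,T_\Psi]$ on $\Lrm^p$. Finally, to cover merely continuous $\phi\in\Crm_c(\R^d)$, approximate uniformly by $\phi_n\in\Crm_c^\infty$: since $\|[M_{\phi_n},T_\Psi]-[M_\phi,T_\Psi]\|_{\Lrm^p\to\Lrm^p}\leq 2\|\phi_n-\phi\|_\infty \|T_\Psi\|_{\Lrm^p\to\Lrm^p}\to 0$ and compact operators form a closed subspace, $[M_\phi,T_\Psi]$ is compact as an $\Lrm^p$-operator.

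The main obstacle is the rigorous handling of the principal-value singular integral in the kernel formula and of the integration-by-parts step leading to the key identity, both of which are delicate because $K_\Psi$ is only a tempered distribution. Technically, one carries out the identity first on a diagonal-avoiding truncation $\{|x-y|>\eps\}$, checks that all arising kernels satisfy the standard Calder\'on--Zygmund size and H\"ormander conditions (using Mihlin regularity of $\Psi$ to control the angular part of $\partial_i K_\Psi$), and passes $\eps\to 0$ in the $\Lrm^p$ topology to absorb the principal value. Once this is done, the rest of the argument is a routine combination of $\Lrm^p$-multiplier theory and Kolmogorov--Riesz--Fr\'echet.
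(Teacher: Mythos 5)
The paper's ``proof'' of this lemma is essentially a citation (to Tartar's Lemma~1.7 and to the pseudodifferential calculus for smooth symbols in Stein), so what you propose is an independent argument and it goes by a different, kernel-based route. Your kernel representation of the commutator, the integration-by-parts identity $[M_\phi,T_\Psi]\partial_i = T_\Psi M_{\partial_i\phi} + C_i[\phi]$, the Fourier-side check, the duality argument upgrading $\Wrm^{-1,p}\to\Lrm^p$ boundedness to $\Lrm^p\to\Wrm^{1,p}$ boundedness, and the tail estimate feeding into Kolmogorov--Riesz--Fr\'echet are all correct. Your observation that the $\Wrm^{-1,p}\to\Lrm^p$ bound genuinely requires Lipschitz $\phi$ while only the compactness survives uniform approximation is a good catch; the lemma as printed, with $\phi\in\Crm_c(\R^d)$, glosses over this.

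The one genuine gap is in the step where you conclude that $C_i[\phi]$, the operator with kernel $(\phi(x)-\phi(y))\,\partial_i K_\Psi(x-y)$, ``is a Calder\'on--Zygmund operator and therefore bounded on every $\Lrm^p$.'' The size bound $\absn{k(x,y)}\lesssim \norm{\nabla\phi}_\infty \absn{x-y}^{-d}$ and the H\"ormander regularity inherited from $\Psi\in\Mcal$ give $\Lrm^p$-boundedness \emph{only once $\Lrm^2$-boundedness is known}; that $\Lrm^2$-boundedness is part of the definition of a Calder\'on--Zygmund operator and does not follow from the kernel estimates. Indeed $C_i[\phi]$ is exactly a first Calder\'on commutator, and its $\Lrm^2$-boundedness for Lipschitz $\phi$ is Calder\'on's 1965 theorem (Coifman--Meyer in higher dimensions) --- a deep result you are silently invoking. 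Either cite it, or note that for the compactness conclusion you may as well take $\phi\in\Crm_c^\infty$ (approximating uniformly at the end, as you already do), in which case the $\Lrm^2$-boundedness of $[M_\phi,T_\Psi]\partial_i$ is elementary: on the Fourier side the kernel is $\hat\phi(\xi-\eta)\bigl(\Psi(\eta/\absn{\eta})-\Psi(\xi/\absn{\xi})\bigr)\,2\pi\ii\,\eta_i$, and the pointwise bound $\absn{\xi}\,\absb{\Psi(\eta/\absn{\eta})-\Psi(\xi/\absn{\xi})}\leq C\norm{\Psi}_\Mcal\,\absn{\xi-\eta}$ together with Young's convolution inequality gives an $\Lrm^2$ bound with constant controlled by $\int\absn{\hat\phi(\zeta)}(1+\absn{\zeta})\dd\zeta$, which is finite for smooth compactly supported $\phi$. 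The cleaner estimate in terms of $\norm{\phi}_\infty+\norm{\nabla\phi}_\infty$ that you write down does, however, require Calder\'on's theorem.
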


\begin{proof}
A direct proof can be found in Lemma~1.7 of~\cite{Tart90HMNA} (also see~\cite{CoRoWe76FTHS} for a more advanced version with $\phi \in \mathrm{VMO}$). For smooth $\phi, \Psi$, the result is in fact well known in the theory of Fourier multipliers and pseudo-differential operators, cf.~\textsection 7.3 in Chapter~VI of~\cite{Stei93HA}. The present result can be reduced to that result by a smoothing argument.
\end{proof}

Another useful compactness result is the following:

\begin{lemma}[Band-limiting] \label{lem:bandlim_compact}
Let $\eta \in \Crm_c^\infty(\R^d)$ and let $\Omega \subset \R^d$ be a bounded open set. Then, the Fourier multiplier operator $T_{\eta}$ is compact from $\Lrm^p(\Omega;\C^N)$ to $\Lrm^p_\loc(\R^d;\C^N)$, that is,
\[
  T_\eta[u_j] \to T_\eta[u]  \quad\text{in $\Lrm^p_\loc(\R^d;\C^N)$}
  \qquad\text{whenever}\qquad
  u_j \toweak u \quad\text{in $\Lrm^p(\Omega;\C^N)$.}
\]
\end{lemma}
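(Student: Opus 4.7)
The plan is to exploit the fact that since $\eta \in \Crm_c^\infty(\R^d)$, its inverse Fourier transform $\check{\eta}$ is a Schwartz function, so that $T_\eta[u] = \check{\eta} \conv u$ is a smoothing convolution. Because $\Omega$ is bounded, the convolution picks up good quantitative estimates despite the fact that $u$ only lives in $\Lrm^p(\Omega;\C^N)$.

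First I would extend every $u_j$ by zero outside $\Omega$, so that $T_\eta[u_j](x) = \int_\Omega \check{\eta}(x-y) u_j(y) \dd y$ makes sense pointwise. Since $u_j \toweak u$ in $\Lrm^p(\Omega;\C^N)$ and the test function $y \mapsto \check{\eta}(x-y) \ONE_\Omega(y)$ lies in $\Lrm^{p'}(\R^d)$ for every fixed $x \in \R^d$, we obtain the pointwise convergence $T_\eta[u_j](x) \to T_\eta[u](x)$ for every $x$. This identifies the only possible limit.

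Next I would upgrade this pointwise convergence to $\Lrm^p_\loc$-convergence by showing that the family $\{T_\eta[u_j]\}_j$ is precompact in $\Lrm^p(K;\C^N)$ for every compact set $K \subset \R^d$. For any multi-index $\alpha$, one computes
\[
  \abs{\partial^\alpha T_\eta[u_j](x)} = \absBB{\int_\Omega (\partial^\alpha \check{\eta})(x-y) u_j(y) \dd y} \leq \norm{\partial^\alpha \check{\eta}}_{\infty}\, \Lcal^d(\Omega)^{1/p'} \norm{u_j}_p,
\]
so $T_\eta[u_j]$ is uniformly bounded in $\Crm^k(K)$ for every $k$ by the uniform bound on $\norm{u_j}_p$. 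Arzel\`{a}--Ascoli then yields relative compactness of $\{T_\eta[u_j]\}_j$ in $\Crm(K)$, and hence in $\Lrm^p(K;\C^N)$. Combining this compactness with the already-established pointwise limit $T_\eta[u]$, uniqueness of limits forces $T_\eta[u_j] \to T_\eta[u]$ in $\Lrm^p(K;\C^N)$ along the full sequence. Since $K$ was an arbitrary compact subset of $\R^d$, this is precisely convergence in $\Lrm^p_\loc(\R^d;\C^N)$.

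I do not expect any genuine obstacle here; the only delicate point is that $u_j$ is only defined on $\Omega$ while the conclusion is on $\R^d$, but this is handled cleanly by zero-extension together with the rapid decay of $\check{\eta}$, which ensures all integrals converge uniformly on compact sets in $x$.
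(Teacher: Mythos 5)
Your proof is correct. It shares the paper's starting point (zero-extension, the convolution representation $T_\eta[u_j] = \check{\eta} \conv u_j$ with $\check{\eta} \in \Scal$, and pointwise convergence obtained by testing the weak convergence against $y \mapsto \check{\eta}(x-y)\ONE_\Omega(y) \in \Lrm^{p'}$), but the mechanism by which you upgrade pointwise convergence to $\Lrm^p_\loc$-convergence is genuinely different. The paper only records a uniform $\Lrm^\infty$-bound, invokes the Dunford--Schwartz theorem (a.e.\ limit and weak limit coincide $\Rightarrow$ strong convergence in $\Lrm^1_\loc$), and then passes from $\Lrm^1_\loc$ to $\Lrm^p_\loc$ using the uniform sup-bound. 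You instead exploit the full smoothing of the convolution: differentiating under the integral gives $\abs{\partial^\alpha T_\eta[u_j]} \leq \norm{\partial^\alpha \check{\eta}}_\infty \Lcal^d(\Omega)^{1/p'} \supmod_j \norm{u_j}_p$, so the family is bounded in $\Crm^k(K)$ for every $k$ and compact $K$, and Arzel\`{a}--Ascoli plus the subsequence principle (the pointwise limit $T_\eta[u]$ identifies every cluster point) yields convergence of the whole sequence, even locally uniformly, hence in $\Lrm^p(K)$. Your route is slightly longer but self-contained (no appeal to the Dunford--Schwartz criterion) and delivers the stronger conclusion of local uniform convergence together with all derivatives being under control; the paper's route is shorter at the price of citing an external convergence theorem. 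One implicit point you should make explicit: the uniform bound $\supmod_j \norm{u_j}_p < \infty$ that you use in the derivative estimates follows from the weak convergence via the uniform boundedness principle.
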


\begin{proof}
We consider functions in $\Lrm^p(\Omega;\C^N)$ to be implicitly extended by zero outside $\Omega$. The operator $T_\eta$ can be written as a convolution, i.e.\
\[
  T_\eta[u](x) = \bigl[\check{\eta} \conv u \bigr](x) = \int \check{\eta}(y) u(x-y) \dd y,
  \qquad u \in \Lrm^p(\Omega;\C^N),
\]
with $\check{\eta} \in \Scal(\R^d;\C^N) \subset \Lrm^{p/(p-1)}(\R^d;\C^N)$. From $u_j \toweak u$ in $\Lrm^p(\Omega)$ it then immediately follows that $T_\eta[u_j] \to T_\eta[u]$ pointwise and also that $\norm{T_\eta[u_j]}_\infty \leq \norm{\check{\eta}}_\infty \cdot \norm{u_j}_1 \leq C < \infty$ by Minkowski's inequality. Standard theory further yields $T_\eta[u_j] \toweak T_\eta[u]$ in $\Lrm^p(\R^d;\C^N)$ and hence in $\Lrm^1_\loc$. As the pointwise and the weak limit agree, it follows that $T_\eta[u_j] \to T_\eta[u]$ strongly in $\Lrm^1_\loc$, see for instance Theorem~IV.8.12 in~\cite{DunSch58LO1} (p.~295). By the uniform boundedness of the sequence, this implies convergence in $\Lrm^p_\loc(\R^d;\C^N)$ as well.
\end{proof}

We will also need:

\begin{lemma} \label{lem:TetaR_conv}
Let $\eta \in \Crm^\infty(\R^d;[0,1])$ with $\ONE_{B(0,1)} \leq \eta \leq \ONE_{B(0,2)}$ and define $\eta_R(\xi) := \eta(\xi/R)$ for $\xi \in \R^d$ and any $R > 1$. Then, the Fourier multipliers $T_{\eta_R}$ are uniformly $(\Lrm^p\to\Lrm^p)$-bounded and $T_{\eta_R}[u] \to u$ (strongly) in $\Lrm^p(\Omega;\C^N)$ for all $u \in \Lrm^p(\R^d;\C^N)$ as $R\to\infty$.
\end{lemma}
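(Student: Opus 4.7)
The plan is to treat the two assertions separately: first verify uniform Mihlin bounds, then recognize $T_{\eta_R}$ as convolution with an approximate identity.

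\textbf{Uniform boundedness.} I would compute, for any multi-index $\alpha$ with $|\alpha|\leq\floor{d/2}+1$,
\[
  \partial^\alpha\eta_R(\xi)=R^{-|\alpha|}(\partial^\alpha\eta)(\xi/R),
\]
and show that $|\xi|^{|\alpha|}\,|\partial^\alpha\eta_R(\xi)|\leq 2^{|\alpha|}\|\partial^\alpha\eta\|_\infty$, since the right-hand side is supported in $|\xi|\leq 2R$, where $|\xi|/R\leq 2$. Hence $\norm{\eta_R}_{\Mcal}$ is bounded independently of $R$, and the Mihlin multiplier estimate~\eqref{eq:T_psi_est} gives $\sup_{R>1}\|T_{\eta_R}\|_{\Lrm^p\to\Lrm^p}<\infty$.

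\textbf{Strong convergence.} Using the scaling property of the Fourier transform, I would write
\[
  T_{\eta_R}[u]=\check{\eta}_R\conv u,\qquad \check{\eta}_R(x)=R^d\check{\eta}(Rx),
\]
where $\check{\eta}\in\Scal(\R^d)\subset\Lrm^1(\R^d)$ because $\eta\in\Crm_c^\infty(\R^d)$. Fourier inversion at $\xi=0$ gives $\int\check{\eta}_R\dd x=\eta_R(0)=\eta(0)=1$, since $0\in B(0,1)$ and $\eta\equiv 1$ there. Thus $\{\check{\eta}_R\}_{R>1}$ is a standard approximate identity obtained by parabolic rescaling of the fixed $\Lrm^1$-kernel $\check{\eta}$, so by the classical approximate-identity theorem (see e.g.\ Theorem~1.2.19 in~\cite{Graf08CFA}),
\[
  \check{\eta}_R\conv u\tolong u\qquad\text{in }\Lrm^p(\R^d;\C^N)\text{ for every }u\in\Lrm^p(\R^d;\C^N),
\]
as $R\to\infty$. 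Restricting to the bounded set $\Omega$ yields the claim. For vector-valued $u$ one simply applies the scalar statement componentwise.

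\textbf{Main obstacle.} The argument is essentially routine; the only slightly subtle point is confirming that the Mihlin constants are genuinely bounded uniformly in $R$ (the support of $\eta_R$ grows, but the decay rate $|\xi|^{-|\alpha|}$ is exactly compensated by the factor $R^{-|\alpha|}$ from the chain rule, which is the reason one rescales via the variable $\xi/R$ rather than $R\xi$). Once that is in place, convergence is immediate from the approximate-identity formulation, and no intricate analysis is needed.
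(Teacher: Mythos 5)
Your proof is correct and follows essentially the same route as the paper: recognizing $T_{\eta_R}[u]=\check{\eta}_R\conv u$ with $\check{\eta}_R(x)=R^d\check{\eta}(Rx)$ an approximate identity (note the rescaling is the standard isotropic dilation, not ``parabolic''), which immediately gives the strong convergence. Your Mihlin computation for the uniform bound is fine, though Young's inequality with $\norm{\check{\eta}_R}_1=\norm{\check{\eta}}_1$ would do the same job even more directly.
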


\begin{proof}
This lemma follows immediately from $T_\eta[u] = \check{\eta}_R \conv u$ and the fact that $\check{\eta}_R(x) = R^d \check{\eta}(Rx)$ is an approximation of the identity as $R\to\infty$, see for example Section~1.2.4 in~\cite{Graf08CFA}.
\end{proof}

We remark that multipliers can also be defined for less smooth (but not merely continuous) symbols, see~\cite{Stef10PORS} for a detailed study.

\subsection{Equiintegrability}

Recall that a family $\{u_j\} \subset \Lrm^p(\R^d;\C^N)$, where $p \in [1,\infty)$ is called \term{$p$-equiintegrable} if it is $\Lrm^p$-bounded and
\[
  \supmod_j \int_A \abs{u_j}^p \dd x  \quad\to\quad  0 \qquad\text{as $\abs{A} \to 0$.}
\]
There are several other equivalent ways of characterizing $p$-equiintegrability, see for example Theorem~2.29 in~\cite{FonLeo07MMCV}. We here only record the following simple fact, which follows from H\"{o}lder's inequality:

\begin{lemma} \label{eq:equiint_product}
Let $\{u_j\} \subset \Lrm^p(\R^d;\C^N)$, $p \in [1,\infty)$, be a $p$-equiintegrable family and $\{v_j\} \subset \Lrm^{p'}(\R^d;\C^N)$ be uniformly $\Lrm^{p'}$-bounded, where $1/p + 1/p' = 1$. Then, the family $\{u_j \cdot v_j\}$ is ($1$-)equiintegrable.
\end{lemma}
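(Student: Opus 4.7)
The plan is to reduce the claim directly to H\"{o}lder's inequality on the set $A$ and then invoke $p$-equiintegrability of $\{u_j\}$ together with $\Lrm^{p'}$-boundedness of $\{v_j\}$.

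First, I would observe that the family $\{u_j \cdot v_j\}$ is uniformly bounded in $\Lrm^1(\R^d)$: by H\"{o}lder's inequality (applied pointwise in $\C^N$ to bound $\abs{u_j \cdot v_j} \leq \abs{u_j}\abs{v_j}$, and then in $\Lrm^p$/$\Lrm^{p'}$),
\[
  \norm{u_j \cdot v_j}_1 \;\leq\; \norm{u_j}_p \, \norm{v_j}_{p'} \;\leq\; C,
\]
where the right-hand side is finite because $p$-equiintegrability of $\{u_j\}$ entails $\Lrm^p$-boundedness and $\{v_j\}$ is $\Lrm^{p'}$-bounded by assumption.

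Next, for any Borel set $A \subset \R^d$, I would apply H\"{o}lder on $A$:
\[
  \int_A \abs{u_j \cdot v_j} \dd x \;\leq\; \biggl(\int_A \abs{u_j}^p \dd x\biggr)^{1/p} \biggl(\int_A \abs{v_j}^{p'} \dd x\biggr)^{1/p'} \;\leq\; \biggl(\int_A \abs{u_j}^p \dd x\biggr)^{1/p} \norm{v_j}_{p'}.
\]
Taking the supremum over $j$ and using that $\sup_j \norm{v_j}_{p'} \leq C$, we obtain
\[
  \supmod_j \int_A \abs{u_j \cdot v_j} \dd x \;\leq\; C \biggl(\supmod_j \int_A \abs{u_j}^p \dd x\biggr)^{1/p}.
\]
The right-hand side tends to zero as $\abs{A} \to 0$ by the assumed $p$-equiintegrability of $\{u_j\}$, which establishes $1$-equiintegrability of $\{u_j \cdot v_j\}$.

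There is no real obstacle here; the only tiny subtlety is to remember that $p$-equiintegrability in the definition used above includes $\Lrm^p$-boundedness, which is what guarantees that the H\"{o}lder bound above is uniform in $j$. Everything else is an immediate application of H\"{o}lder's inequality.
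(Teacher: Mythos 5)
Your proof is correct and is exactly the argument the paper has in mind: the lemma is stated there as an immediate consequence of H\"{o}lder's inequality, applied on the set $A$ together with the $\Lrm^{p'}$-bound on $\{v_j\}$ and the $p$-equiintegrability (including $\Lrm^p$-boundedness) of $\{u_j\}$. No differences worth noting.
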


\subsection{Test functions} \label{ssc:vertical_test_funct}
Besides \enquote{horizontal} test functions that are used to localize in space, we also need our test functions to depend on the \enquote{vertical} argument, so that they allow us to localize in the target space as well. Spaces of such test functions necessarily depend on the exponent of the $\Lrm^p$-space, in which we are working.

First, for $q \in (0,\infty)$ and functions $h \in \Crm(\C^N)$, $g \in \Crm(\C\B^N)$ (recall that $\C\B^N$ denotes the unit ball in $\C^N$) define
\begin{align}
  S^q h(w) &:= (1-\abs{w})^q h \biggl( \frac{w}{1-\abs{w}} \biggr),  \qquad w \in \C\B^N,   \label{eq:Sp} \\
  S^{-q} g(z) &:= (1+\abs{z})^q g \biggl( \frac{z}{1+\abs{z}} \biggr),  \qquad z \in \C^N.  \notag
\end{align}
It is easy to check that $S^q S^{-q} = \id$ and $S^{-q} S^q = \id$.

Now let $p \in (1,\infty)$ and define the space $\Fbf^p(\Omega;\C^N)$ of \term{test functions}, where $\Omega \subset \R^d$ is open, as follows (it will become clear in Section~\ref{ssc:def} why the definition is chosen as such):
\begin{align*}
  \Fbf^p(\Omega;\C^N) := \setb{ f \in \Crm(\cl{\Omega} \times \C^N \times \C^N;\C) }{ &\text{$f(x,z,q) = h(x,z) \cdot q$ and}\\
  &\text{$S^{p-1}h \in \Crm(\cl{\Omega \times \C\B^N};\C^N)$} }.
\end{align*}
The norm of an element $f \in \Fbf^p(\Omega;\C^N)$ with $f(x,z,q) = h(x,z) \cdot q$ is
\[
  \norm{f}_{\Fbf^p} := \sup \, \setb{ \abs{S^{p-1}h(x,w)} }{(x,w) \in \cl{\Omega \times \C\B^N}}.
\]
Clearly, for every $f(x,z,q) = h(x,z) \cdot q \in \Fbf^p(\Omega;\C^N)$, the associated $h$ has a \term{(strong) $(p-1)$-recession function}
\begin{equation} \label{eq:h_infty}
  h^\infty(x,z) := \lim_{\substack{\!\!\!\! x' \to x \\ \!\!\!\! z' \to z \\ \; t \to \infty}} \frac{h(x',tz')}{t^{p-1}},
  \qquad x \in \cl{\Omega}, \; z \in \C^N.
\end{equation}
Notice that $S^{p-1}h(x,w) = h^\infty(x,w)$ for all $(x,\omega) \in \cl{\Omega} \times \partial \C\B^N$. Furthermore,
\begin{equation} \label{eq:h_est}
  \abs{h(x,z)} \leq \norm{f}_{\Fbf^p}(1 + \abs{z})^{p-1} \leq 2^{p-1} \norm{f}_{\Fbf^p}(1 + \abs{z}^{p-1})
\end{equation}
and
\[
  \abs{h^\infty} \leq \norm{f}_{\Fbf^p}.
\]

Finally, we remark that for $f(x,z,q) = h(x,z) \cdot q \in \Fbf^p(\Omega;\C^N)$ we can consider $h$ also as a function on the extended compactified space $\cl{\Omega} \times \sigma\C^N$. Define this extended function $Eh$ as follows:
\begin{equation} \label{eq:Eh}
\begin{aligned}
  &Eh \colon \cl{\Omega} \times \sigma\C^N \to \C^N, \\
  &Eh(x,z) := (1+\abs{z})^{-(p-1)} \, h(x,z),  \qquad (x,z) \in \cl{\Omega} \times \sigma\C^N.
\end{aligned}
\end{equation}
One can then check that the assumptions on $f$ entail that $Eh$ is continuous (with respect to the relevant topologies) and $\abs{Eh} \leq \norm{f}_{\Fbf^p}$.

\section{Microlocal compactness forms} \label{sc:MCF}

\subsection{Definition and existence of MCFs} \label{ssc:def}

In all of the following let $p \in (1,\infty)$ and $\Omega \subset \R^d$ an open bounded set (for unbounded sets $\Omega$ a largely equivalent theory is available as long as all spaces are replaced by their respective local versions). We will work with parametrized sesquilinear forms
\[
  \omega_x \colon \Crm(\C^N;\C^N) \times \Mcal \to \C,
  \qquad x \in \cl{\Omega},
\]
where as before $\Mcal = \Mcal(\Sbb^{d-1};\C^{N \times N})$. We usually write the application of this sesquilinear form to the pair $(g,\Psi)$ in the \enquote{tensor} notation
\[
  \dprb{g \otimes \overline{\Psi},\omega_x} := \omega_x(g,\Psi).
\]
Let now $\lambda \in \Mbf^+(\cl{\Omega})$ be a positive finite Borel measure. Then, we say that the family $(\omega_x)_{x \in \cl{\Omega}}$ is \term{weakly* measurable} with respect to $\lambda$ if for every fixed $\Psi \in \Mcal$ and every fixed Carath\'{e}odory-type function $g \colon \cl{\Omega} \times \C^N \to \C^N$ with the properties that $x \mapsto g(x,z)$ is $\lambda$-measurable for all fixed $z$ and $z \mapsto g(x,z)$ is continuous for all fixed $x$, the compound function
\[
  x \mapsto \dprb{g(x,\frarg) \otimes \overline{\Psi},\omega_x}
\]
is $\lambda$-measurable. For a family
\[
  \omega_x^\infty \colon \Crm(\partial \C\B^N;\C^N) \times \Mcal \to \C,
  \qquad x \in \cl{\Omega},
\]
we define weak*-measurability in a completely analogous manner.

\begin{definition} \label{def:MCF}
Let $\omega = (\omega_x,\lambda_\omega,\omega_x^\infty)$ be a triple consisting of
\begin{itemize}
\item[(i)] a parametrized family $(\omega_x)_{x \in \Omega}$ of continuous sesquilinear forms
\[
  \qquad \omega_x \colon \Crm(\C^N;\C^N) \times \Mcal \to \C,
\]
\item[(ii)] a positive and finite measure $\lambda_\omega \in \Mbf(\cl{\Omega})$, and
\item[(iii)] a parametrized family $(\omega_x^\infty)_{x \in \cl{\Omega}}$ of continuous sesquilinear forms
\[
  \qquad \omega_x^\infty \colon \Crm(\partial \C\B^N;\C^N) \times \Mcal \to \C.
\]
\end{itemize}
Also assume that $\omega$ satisfies the following properties:
\begin{itemize}
\item[(iv)] The family $(\omega_x)_x$ is weakly* measurable with respect to $\Lcal^d \restrict \Omega$ and the family $(\omega_x^\infty)_x$ is weakly* measurable with respect to $\lambda_\omega$.
\item[(v)] For every fixed $\Psi \in \Mcal$, the function $x \mapsto \dpr{\abs{z}^{p-2} z \otimes \overline{\Psi},\omega_x}$ lies in $\Lrm^1(\Omega;\C)$ and the function $x \mapsto \dpr{z \otimes \overline{\Psi},\omega_x^\infty}$ (with the function $\partial \C\B^N \ni z \mapsto z$) lies in $\Lrm^1(\cl{\Omega},\lambda_\omega;\C)$.
\end{itemize}
Then, $\omega$ is called a \term{$p$-microlocal compactness form (MCF)} on $\Omega$ with target space $\C^N$. The set of all such microlocal compactness forms is denoted by $\MCF^p(\Omega;\C^N)$.
\end{definition}

\begin{definition}
Let $\omega \in \MCF^p(\Omega;\C^N)$, $f \in \Fbf^p(\Omega;\C^N)$ with $f(x,z,q) = h(x,z) \cdot q$, and $\Psi \in \Mcal$. Then, we define the \term{duality product}
\begin{equation} \label{eq:omega_ddpr}
  \ddprb{f \otimes \overline{\Psi},\omega} := \int_\Omega \dprb{h(x,\frarg) \otimes \overline{\Psi},\omega_x} \dd x + \int_{\cl{\Omega}} \dprb{h^\infty(x,\frarg) \otimes \overline{\Psi},\omega_x^\infty} \dd \lambda_\omega(x).
\end{equation}
If we want to stress the domain $\Omega$, we write $\ddpr{\frarg,\frarg}_\Omega$.
\end{definition}

A justification for the above tensor-product notation will be provided in Section~\ref{ssc:ext_repr_distrib} below. Notice that according to the properties of $\omega$, we may also consider $\omega = \omega(\frarg,\frarg)$ as a sesquilinear form on $\Fbf^p(\Omega;\C^N) \times \Mcal$ and then
\[
  \ddprb{f \otimes \overline{\Psi},\omega} = \omega(f,\Psi).
\]

To state the main existence theorem for microlocal compactness forms, we first fix $\eta \in \Crm_c^\infty(\R^d)$ with $\ONE_{B(0,1)} \leq \eta \leq \ONE_{B(0,2)}$. Set $\eta_R(\xi) := \eta(\xi/R)$ for $\xi \in \R^d$ and any $R > 1$. It will be seen shortly that the theory is independent of the choice of $\eta$.

\begin{theorem} \label{thm:omega}
Let $(u_j) \subset \Lrm^p(\Omega;\C^N)$ be a norm-bounded sequence, where $1 < p < \infty$. Then, after selecting a subsequence (not relabeled), there exist a $p$-microlocal compactness form $\omega \in \MCF^p(\Omega;\C^N)$ such that for all $f \in \Fbf^p(\Omega;\C^N)$ with $f(x,z,q) = h(x,z) \cdot q$ and $\Psi \in \Mcal$ it holds that
\begin{equation} \label{eq:omega}
  \ddprb{f \otimes \overline{\Psi},\omega} = \lim_{R\to\infty} \lim_{j\to\infty} \int_\Omega h(\frarg,u_j) \cdot \overline{T_{(1-\eta_R)\Psi}[u_j]} \dd x,
\end{equation}
where $T_{(1-\eta_R)\Psi}$ is the Fourier multiplier operator with symbol $\xi \mapsto (1-\eta_R(\xi))\Psi(\xi/\abs{\xi})$. Moreover, the following properties hold:
\begin{itemize}
\item[(A)] \textbf{Independence.} The MCF $\omega$ is independent of the choice of $\eta$ and the choice of the sequence $R \to \infty$ (that is, every sequence $(R_n)_n \subset \R$ with $R_n \to \infty$ as $n \to \infty$ gives the same result).
\item[(B)] \textbf{Basic estimate.} For all $f,\Psi$ as above it holds that
\begin{equation} \label{eq:omega_est}
  \qquad \absb{\ddprb{f \otimes \overline{\Psi},\omega}} \leq C \bigl( \supmod_j \norm{1 + \abs{u_j}}_p^p \bigr) \cdot \norm{f}_{\Fbf^p} \cdot \norm{\Psi}_\Mcal,
\end{equation}
where $C = C_{d,N} \max\{p,(p-1)^{-1}\}$ with a dimensional constant $C_{d,N}$.
\item[(C)] \textbf{Canonical form.} It is possible to choose
\begin{equation} \label{eq:lambda_omega}
  \qquad \lambda_\omega = \wslim_{j\to\infty} \, \abs{u_j}^p \, \Lcal^d \restrict \Omega.
\end{equation}
\end{itemize}
\end{theorem}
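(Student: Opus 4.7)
The plan is to proceed in four steps: (i)~establish the basic estimate~\eqref{eq:omega_est} at the pre-limit level; (ii)~extract a common subsequence along which the double limit exists for every admissible test pair; (iii)~identify the resulting sesquilinear form with a microlocal compactness form; (iv)~verify independence and the canonical form of $\lambda_\omega$.

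For (i), H\"older's inequality together with~\eqref{eq:h_est} bounds $\|h(\cdot,u_j)\|_{p'}\leq 2^{p-1}\|f\|_{\Fbf^p}\|1+|u_j|\|_p^{p-1}$, while the symbol $(1-\eta_R(\xi))\Psi(\xi/|\xi|)$ inherits Mihlin-type bounds from $\Psi$ uniformly in $R\geq 1$ (the derivatives of $\eta_R$ produce factors $R^{-|\alpha|}$ absorbed into the $|\xi|^{-|\alpha|}$-bound because the symbol is supported in $\{|\xi|\gtrsim R\}$), whence~\eqref{eq:T_psi_est} yields the desired bound on $\|T_{(1-\eta_R)\Psi}[u_j]\|_p$. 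Multiplying gives the pre-limit version of~(B), which survives the double limit.

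For (ii), the spaces $\Fbf^p(\Omega;\C^N)$ (identified via~\eqref{eq:Eh} with a closed subset of $\Crm(\cl{\Omega\times\C\B^N};\C^N)$) and $\Mcal(\Sbb^{d-1};\C^{N\times N})$ are both separable, so one can fix countable dense families $(f_k)_k$, $(\Psi_l)_l$ and a sequence $R_n\uparrow\infty$. For each fixed triple $(k,l,n)$, the pre-limit integral in~\eqref{eq:omega} is a bounded sequence of complex numbers by (i), admitting a convergent subsequence; a diagonal extraction yields one subsequence of $(u_j)$ for which every $j$-limit exists. The subsequent $R$-limit is controlled by a Cauchy-in-$R$ argument: the identity $T_{(1-\eta_R)\Psi}-T_{(1-\eta_{R'})\Psi}=T_{\eta_{R'}\Psi}-T_{\eta_R\Psi}$ combined with the factorization $T_{\eta_R\Psi}=T_{\eta_R}\circ T_\Psi$ and Lemma~\ref{lem:TetaR_conv} gives convergence to zero on the smooth portion of the symbol (after isolating the singular origin contribution via a fixed cutoff), while the commutation lemma (Lemma~\ref{lem:commutation}) transfers cutoffs across Fourier multipliers modulo compact remainders and Lemma~\ref{lem:bandlim_compact} dispatches the resulting band-limited pieces. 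A further diagonal extraction produces a single subsequence for which the full double limit exists along $(f_k,\Psi_l)$; density and~(i) extend existence to all $(f,\Psi)\in\Fbf^p\times\Mcal$.

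Step (iii) is the main technical obstacle. For each fixed $\Psi\in\Mcal$, the limit $\Lambda(f,\Psi)$ is linear in $f$ with the growth bound from (i). Viewing $h$ through the extension~\eqref{eq:Eh} as a continuous function on $\cl\Omega\times\sigma\C^N$, the generalized Young measure representation theorem of DiPerna--Majda / Alibert--Bouchitt\'e yields parametrized forms $\omega_x^\Psi\colon \Crm(\C^N;\C^N)\to\C$, $\omega_x^{\infty,\Psi}\colon \Crm(\partial\C\B^N;\C^N)\to\C$, and a positive finite measure on $\cl\Omega$ realizing the integral representation~\eqref{eq:omega_ddpr}. Because $|h(\cdot,u_j)\cdot\overline{T_{(1-\eta_R)\Psi}[u_j]}|\lesssim 1+|u_j|^p$, the concentration part of the sequence of integrands is dominated by that of $|u_j|^p$, so the measure may be chosen canonically as~\eqref{eq:lambda_omega}, establishing~(C). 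Sesquilinearity in $\Psi$ (inherited from the pre-limit) then lets one set $\dpr{g\otimes\overline\Psi,\omega_x}:=\omega_x^\Psi(g)$ and analogously for $\omega_x^\infty$, while weak-* measurability follows from the integral formulas by approximation. Finally, (A) is obtained by applying the Cauchy-in-$R$ estimate of (ii) to the symbol difference $\eta_{R_n}^{(1)}-\eta_{R_n'}^{(2)}$ corresponding to two choices of cutoff: both differences act asymptotically as zero on the relevant $\Lrm^p$-quantities, yielding equal limits. The heart of the proof is the coherent interleaving of the \enquote{vertical} generalized Young measure disintegration (in the value variable $z$) with the \enquote{horizontal} Fourier multiplier structure (in the direction variable $\xi/|\xi|$) while preserving sesquilinearity and measurability.
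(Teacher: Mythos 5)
Your steps (i), (ii) and the independence argument in (A) are essentially the paper's: uniform Mihlin bounds for $(1-\eta_R)\Psi$, a diagonal extraction over countable dense families of $f$ and $\Psi$, and the observation that after the $j$-limit the $R$-dependence sits only in a band-limited piece applied to the weak limit, which is handled by the compactness of $T_{\eta_{R'}-\eta_R}$ (a convolution with a Schwartz function) and Lemma~\ref{lem:TetaR_conv}; comparing two cutoffs $\eta,\zeta$ works the same way. Up to that point the proposal is sound.

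The gap is in step (iii), which is the heart of the theorem. You cannot invoke the DiPerna--Majda / Alibert--Bouchitt\'e representation theorem for the sequence $(u_j)$: that theorem represents limits of $\int F(x,u_j)\,\di x$ for a \emph{fixed} Carath\'eodory integrand $F$ of $u_j$ alone, whereas here the integrand contains the nonlocal, $j$- and $R$-dependent factor $T_{(1-\eta_R)\Psi}[u_j]$, so the functional $\Lambda(\cdot,\Psi)$ is not a Young-measure limit of $(u_j)$ in that sense. Your domination claim is also false pointwise: $\absn{h(x,u_j)\cdot\overline{T_{(1-\eta_R)\Psi}[u_j]}}\lesssim (1+\abs{u_j})^{p-1}\,\absn{T_{(1-\eta_R)\Psi}[u_j]}$ is controlled by $1+\abs{u_j}^p$ only after integration via H\"older, and this weaker control is precisely what makes the construction delicate. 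The paper instead builds, for each fixed $\Psi$, joint compactified measures $\mu_{j,R}^\Psi$ on $\cl{\Omega\times\C\B^N\times\C\B^N}$ weighted by $(1+\abs{u_j})^{p-1}(1+\abs{v_{j,R}})$ with $v_{j,R}=\overline{T_{(1-\eta_R)\Psi}[u_j]}$, passes to weak* limits, disintegrates twice, proves absolute continuity of the projection onto $\cl{\Omega}$ with respect to $\lambda=\wslim(1+\abs{u_j})^p\Lcal^d$ through the H\"older-type estimate~\eqref{eq:kappa_est} (an exponent $(p-1)/p$, not a domination), linearizes in the second value slot by $V(x,w)=\int r\dd\sigma_{x,w}(r)$, and---crucially---shows that the cross term ``singular in $x$, finite in $z$'' vanishes via an equiintegrability/Dunford--Pettis argument for integrands with $S^{p-1}g\in\Crm_c(\C\B^N;\C^N)$ followed by differentiation with respect to the domain. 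None of this is supplied or replaced by your appeal to the Young-measure theorem. Likewise, property~(C) is not automatic from ``domination'': the paper needs a separate comparison (Step~3), redoing the construction with weight $\abs{u_j}^{p-1}$ and testing with positively $(p-1)$-homogeneous integrands cut off near the origin, to replace $\lambda$ by the canonical $\lambda_\omega=\wslim\abs{u_j}^p\Lcal^d$ while keeping $\lambda_\omega$ independent of $\Psi$. As it stands, your proposal asserts the existence of the disintegrated triple $(\omega_x,\lambda_\omega,\omega_x^\infty)$ rather than proving it.
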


\begin{definition}
In the situation of the preceding theorem we say that the (sub)sequence $(u_j)$ \term{generates} $\omega$.
\end{definition}

\begin{remark}
Let us make a few remarks about the definition of MCFs:
\begin{enumerate}
\item The definition is tailored so that we can localize in both the $x$- and in the $\xi$-variable (at least to some degree). At first sight this appears to contradict the uncertainty principle for the Fourier transform, but as we here only look at oscillations and concentrations that have asymptotically \emph{infinite} frequency, the uncertainty principle does not apply (as was already the case for H-measures).
\item Further to the first point, the purpose of the outer limit $R\to\infty$ is precisely for the microlocal compactness form to \enquote{forget} finite frequencies. This is important both from a conceptual and from a technical point of view, and exploited for example in Lemma~\ref{lem:phi_exchange} below.
\item For $\Lcal^d$-a.e.\ $x \in \Omega$, $\omega_x$ can also be considered as an anti-linear bounded function from $\Mcal$ to the space of $\C^N$-valued Radon measures on $\Cbb^N$ and likewise, for $\lambda_\omega$-a.e.\ $x \in \cl{\Omega}$, $\omega_x^\infty$ can be considered as an anti-linear bounded function from $\Mcal$ into the space of $\C^N$-valued Radon measures on the unit sphere $\partial \C\B^N$. So, MCFs can be considered as \enquote{multiplier-parametrized} measures.
\item While the family $(\omega_x)_x$ is easily seen to be uniquely determined, the choice of $(\omega_x^\infty)_x$ and $\lambda_\omega$ is of course \emph{not} unique: Multiplying $\lambda_\omega$ by any measurable function $\gamma \colon \cl{\Omega} \to (0,\infty)$ and then multiplying $\omega_x^\infty$ correspondingly by $1/\gamma(x)$ yields the same MCF. However, in this paper we will always use the canonical choice~\eqref{eq:lambda_omega}, whereby also $(\omega_x)_x^\infty$ is determined $\lambda_\omega$-almost everywhere.
\item One can also define a local version of MCFs, where $\Lrm^p$ is replaced by $\Lrm^p_\loc$. Then, $\lambda_\omega$ is only $\sigma$-finite and all estimates are only valid when restricted to compact subsets of $\Omega$ (with constants depending on that compact subset).
\item The above definition sheds some light on why we chose to work with test functions $f(x,z,q) = h(x,z) \cdot q \in \Fbf^p(\Omega;\C^N)$ instead of using $h(x,z)$ directly: In the above definition, we plug $\overline{T_{(1-\eta_R)\Psi}[u_j]}$ into the argument $q$, so $h$ by itself is only \enquote{half} the test function. Also, it is our aim to investigate $p$-growth concentrations and $h$ only has $(p-1)$-growth.
\item For the special case $p=2$ see Sections~\ref{ssc:ext_repr_distrib} and~\ref{ssc:MCF_YM_HM}.
\end{enumerate}
\end{remark}

\begin{remark} \label{rem:omega_alt}
Using Parseval's formula, we can rewrite the integral in~\eqref{eq:omega}:
\begin{align*}
  \int_\Omega h(\frarg,u_j) \cdot \overline{T_{(1-\eta_R)\Psi}[u_j]} \dd x
    &= \int_\Omega T_{(1-\eta_R)\Psi^*} \bigl[ h(\frarg,u_j) \bigr] \cdot \overline{u_j} \dd x \\
  &= \dprb{ (1-\eta_R)\Psi^* \Fcal \bigl[ h(\frarg,u_j) \bigr], \overline{\Fcal[u_j]} },
\end{align*}
where $\Psi^*(\xi) := \overline{\Psi(\xi)^T}$ as usual. In the last line one of the Fourier transforms is applied to a function in $\Lrm^q$ with $q \leq 2$ and one to a function in $\Lrm^{q'}$ with $1/q + 1/q' = 1$ so that $q' \geq 2$, which (unless $p = 2$) is only defined as a tempered distribution. The bracket $\dpr{\frarg,\frarg}$ therefore is to be understood in an appropriate distributional sense. If additionally $u_j \in (\Lrm^1 \cap \Lrm^p)(\Omega;\C^N)$ and $\hat{u}_j \in (\Lrm^1 \cap \Lrm^{p'})(\Omega;\C^N)$ with $1/p + 1/p' = 1$, we can further express this as
\begin{align}
  &\int_{\R^d} (1-\eta_R(\xi))\Psi^* \biggl(\frac{\xi}{\abs{\xi}}\biggr) \Fcal \bigl[ h(\frarg,u_j) \bigr](\xi) \cdot \overline{\Fcal[u_j](\xi)} \dd \xi  \label{eq:MCF_L2} \\
  &= \int_{\R^d} \int_\Omega \int_\Omega (1-\eta_R(\xi))\Psi^* \biggl(\frac{\xi}{\abs{\xi}}\biggr) h(x,u_j(x)) \cdot \overline{u_j(y)} \, \ee^{2\pi\ii (y-x) \cdot \xi} \dd y \dd x \dd \xi.  \label{eq:full_integral}
\end{align}
\end{remark}

\begin{proof}[Proof of Theorem~\ref{thm:omega}]
We divide the proof into several steps.

\proofstep{Step 1.} For any $f \in \Fbf^p(\Omega;\C^N)$ with $f(x,z,q) = h(x,z) \cdot q$, we have by H\"{o}lder's inequality,~\eqref{eq:h_est}, and the usual Mihlin multiplier estimates, cf.~\eqref{eq:T_psi_est}, that
\begin{align}
  &\int \absb{h(\frarg,u_j) \cdot \overline{T_{(1-\eta_R)\Psi}[u_j]}} \dd x  \notag\\
  &\qquad \leq \norm{T_{1-\eta_R}}_{\Lrm^p\to\Lrm^p} \cdot \norm{T_\Psi}_{\Lrm^p\to\Lrm^p} \cdot \norm{f}_{\Fbf^p} \cdot \norm{(1+\abs{u_j})^{p-1}}_{p/(p-1)} \cdot \norm{u_j}_p  \notag\\
  &\qquad \leq C_{d,N} \max\{p,(p-1)^{-1}\} \cdot \norm{1+\abs{u_j}}_p^p \cdot \norm{f}_{\Fbf^p} \cdot \norm{\Psi}_\Mcal.  \label{eq:omega_n}
\end{align}
It can be calculated that $\norm{T_{1-\eta_R}}_{\Lrm^p\to\Lrm^p} \leq \norm{T_{1-\eta}}_{\Lrm^p\to\Lrm^p}$ and hence this norm does not depend on $R$. We will show below that $\eta$ can be chosen arbitrarily without changing the definition of MCFs, hence we can pick one and absorb $\norm{T_{1-\eta}}$ into $C_{d,N}$.

Now take countable dense subsets $\{f_m(x,z,q) = h_m(x,z) \cdot q\}_{m\in\N} \subset \Fbf^p(\Omega;\C^N)$, $\{\Psi_n\}_{n\in\N} \subset \Mcal$ (see the proof of Lemma~\ref{lem:countable_test} below) such that all $f_m, \Psi_n$ are smooth, and use a diagonal construction to select a subsequence of $(u_j)$ (not relabeled) such that all the limits
\[
  I_{m,n} := \lim_{j\to\infty} \int h_m(\frarg,u_j) \cdot \overline{T_{\Psi_n}[u_j]} \dd x,  \qquad m,n \in \N,
\]
and
\[
  H_m := \wlim_{j\to\infty} \, h_m(\frarg,u_j)  \qquad\text{in $\Lrm^{p/(p-1)}(\Omega;\C^N)$},  \qquad m \in \N,
\]
exist. Selecting another (not relabeled) subsequence if necessary, we may furthermore assume that
\[
  u_j \toweak u   \qquad\text{in $\Lrm^p(\Omega;\C^N)$.}
\]

We have for every $R > 0$,
\begin{align*}
  &\lim_{j\to\infty} \int h_m(\frarg,u_j) \cdot \overline{T_{(1-\eta_R)\Psi_n}[u_j]} \dd x \\
  &\qquad = \lim_{j\to\infty} \int h_m(\frarg,u_j) \cdot \overline{T_{\Psi_n}[u_j]} \dd x
    - \lim_{j\to\infty} \int h_m(\frarg,u_j) \cdot \overline{T_{\eta_R\Psi_n}[u_j]} \dd x \\
  &\qquad = I_{m,n} - \int H_m \cdot \overline{T_{\eta_R\Psi_n}[u]} \dd x,
\end{align*}
where the last equality holds as $T_{\eta_R \Psi_n}$ is a compact operator by Lemma~\ref{lem:bandlim_compact}. Letting $R\to\infty$, we get by virtue of Lemma~\ref{lem:TetaR_conv},
\[
  \lim_{R\to\infty} \lim_{j\to\infty} \int h_m(\frarg,u_j) \cdot \overline{T_{(1-\eta_R)\Psi_n}[u_j]} \dd x
  = I_{m,n} - \int H_m \cdot \overline{T_{\Psi_n}[u]} \dd x
\]
for any sequence $R\to\infty$. This shows the independence from the choice of the sequence $R\to\infty$ in~(A), at least for all $f_m \otimes \overline{\Psi}_n$, the general assertion then follows by density (see below).

We next show that the value of the expression in~\eqref{eq:omega} does not depend on the choice of $\eta$. For two such choices $\eta, \zeta$ we observe
\begin{equation} \label{eq:eta_indep}
  \lim_{j\to\infty} \int_\Omega h_m(\frarg,u_j) \cdot \overline{T_{(1-\eta_R)-(1-\zeta_R)} T_\Psi[u_j]} \dd x = \int_\Omega H_m \cdot \overline{T_\Psi T_{\zeta_R-\eta_R}[u]} \dd x,
\end{equation}
since $T_{\zeta_R-\eta_R}$ is a convolution operator with a rapidly decaying function and so $T_{\zeta_R-\eta_R}[u_j] \to T_{\zeta_R-\eta_R}[u]$ strongly (cf.\ the proof of Lemma~\ref{lem:bandlim_compact}). Both $\eta$ and $\zeta$ are identically $1$ on the unit ball $B(0,1)$. Thus, $\zeta_R - \eta_R$ vanishes on the ball $B(0,R)$ and so, the right hand side of~\eqref{eq:eta_indep} tends to zero as $R\to\infty$ by Lemma~\ref{lem:TetaR_conv}. This establishes the second part of~(A).

We can now unambiguously define for all $m,n \in \N$,
\[
  \ddprb{f_m \otimes \overline{\Psi}_n,\omega} := \lim_{R\to\infty} \lim_{j\to\infty} \int h_m(\frarg,u_j) \cdot \overline{T_{(1-\eta_R)\Psi_n}[u_j]} \dd x.
\]
From~\eqref{eq:omega_n} we now immediately see~\eqref{eq:omega_est} for $f_m \otimes \overline{\Psi}_n$. By density we can then extend this definition to all $f \otimes \overline{\Psi} \in \Fbf^p(\Omega;\C^N) \otimes \Mcal$. In particular, one can check, using an easy Cauchy sequence argument, that the limits in the definition~\eqref{eq:omega} exist and are independent of the choice of $\eta$ and the sequence $R\to\infty$. Subsequently,~(B) holds for all $f \otimes \overline{\Psi}$.

\proofstep{Step 2.}
In the following we establish the properties (i)--(v) from the definition of microlocal compactness forms. Fix $\Psi \in \Mcal$, set $v_{j,R}(x) := \overline{T_{(1-\eta_R)\Psi}[u_j](x)}$, and define the positive measures
\[
  \mu_{j,R}^\Psi \in \Mbf^+(\cl{\Omega \times \C\B^N \times \C\B^N})
\]
via
\[
  \mu_{j,R}^\Psi := (1+\abs{u_j(x)})^{p-1} \, (1+\abs{v_{j,R}(x)}) \, \Lcal_x^d \restrict \Omega \otimes \delta_{\frac{u_j(x)}{1+\abs{u_j(x)}}} \otimes \delta_{\frac{v_{j,R}(x)}{1+\abs{v_{j,R}(x)}}}.
\]
By H\"{o}lder's inequality and a multiplier estimate similar to~\eqref{eq:omega_n},
\begin{align*}
  &\abs{\mu_{j,R}^\Psi}(\cl{\Omega \times \C\B^N \times \C\B^N}) \\
  &\qquad = \int_\Omega (1+\abs{u_j})^{p-1}(1+\abs{v_{j,R}}) \dd x  \\
  &\qquad \leq \biggl(\int_\Omega (1+\abs{u_j})^p \dd x \biggr)^{(p-1)/p} \cdot \biggl(\int_\Omega (1+\abs{v_{j,R}})^p \dd x \biggr)^{1/p}  \notag \\
  &\qquad \leq C\norm{1+\abs{u_j}}_p^p.
\end{align*}
Thus, for all $f(x,z,q) = h(x,z) \cdot q \in \Fbf^p(\Omega;\C^N)$,
\begin{align}
  &\int_\Omega h(\frarg,u_j) \cdot \overline{T_{(1-\eta_R)\Psi}[u_j]} \dd x \notag\\
  &\qquad = \int_\Omega h(\frarg,u_j) \cdot v_{j,R} \dd x  \notag\\
  &\qquad = \int_\Omega (1+\abs{u_j})^{p-1} (S^{p-1}h)\biggl(x, \frac{u_j}{1+\abs{u_j}} \biggr) \cdot v_{j,R} \dd x  \notag\\
  &\qquad = \int (S^{p-1}h)(x,w) \cdot r \,\dd \mu_{j,R}^\Psi(x,w,r).  \label{eq:mu_int}
\end{align}
Now fix a sequence $R \to \infty$ and select further subsequences such that $\mu_{j,R}^\Psi \toweakstar \mu_R^\Psi$ as $j \to \infty$ (for all $R$ in the chosen sequence) and $\mu_R^\Psi \toweakstar \mu^\Psi$ as $R \to \infty$, where both limits are to be understood in $\Mbf^+(\cl{\Omega \times \C\B^N \times \C\B^N})$. Passing to the limit $j \to \infty$ and then $R \to \infty$ in the last expression,
\begin{equation} \label{eq:omega_mu_Psi}
  \ddprb{f \otimes \overline{\Psi},\omega} = \int (S^{p-1}h)(x,w) \cdot r \,\dd \mu^\Psi(x,w,r).
\end{equation}
From the previous proof step, we also conclude that we in fact did not need to select any further subsequences above.

In the following we suppress any dependence on $\Psi$, which for the moment we consider as fixed. Disintegrating $\mu = \mu^\Psi$, for instance using Theorem~2.28 in~\cite{AmFuPa00FBVF}, we infer the existence of $\gamma \in \Mbf(\cl{\Omega \times \C\B^N})$ and a family $(\sigma_{x,w})_{(x,w) \in \cl{\Omega \times \C\B^N}}$ of probability measures on $\cl{\C\B^N}$ that is weakly* measurable with respect to $\gamma$, and is such that
\[
  \mu = \gamma(\di x \, \di w) \otimes \sigma_{x,w},
\]
i.e.
\[
  \int \Phi(x,w,r) \dd \mu = \int \int \Phi(x,w,r) \dd \sigma_{x,w}(r) \dd \gamma(x,w)
\]
for all $\Phi \in \Crm(\cl{\Omega \times \C\B^N \times \C\B^N})$. Apply the disintegration theorem again to decompose
\[
  \gamma = \kappa(\di x) \otimes \eta_x(\di w)
\]
for $\kappa \in \Mbf(\cl{\Omega})$ and a family $(\eta_x)_{x \in \cl{\Omega}}$ of probability measures on $\cl{\C\B^N}$ that are weakly* measurable with respect to $\kappa$. Moreover,
\[
  \kappa(A) = \mu(A \times \cl{\C\B^N \times \C\B^N})
  \qquad\text{for all Borel sets $A \subset \cl{\Omega}$.}
\]

Taking another $j$-subsequence if necessary, we can define
\[
  \lambda := \wslim_{j\to\infty} \, (1+\abs{u_j})^p \, \Lcal^d \restrict \Omega
  \quad \in \Mbf^+(\cl{\Omega}).
\]
Let $A \subset \cl{\Omega}$ be relatively open in $\cl{\Omega}$. Then, using the same Fourier multiplier estimates as before and standard results in measure theory, we infer
\begin{align}
  \kappa(A) &\leq \liminf_{R\to\infty} \liminf_{j\to\infty} \int_A (1+\abs{u_j})^{p-1}(1+\abs{v_{j,R}}) \dd x  \notag \\
  &\leq \sup_{j,R} \norm{1+\abs{v_{j,R}}}_p \cdot \limsup_{j\to\infty} \, \biggl( \int_{A} (1+\abs{u_j})^p \dd x \biggr)^{(p-1)/p}  \notag \\
  &\leq C \bigl( \supmod_j \norm{1+\abs{u_j}}_p \bigr) \cdot \norm{\Psi}_\Mcal \cdot \lambda(\cl{A})^{(p-1)/p}.  \label{eq:kappa_est}
\end{align}
This estimate implies that $\kappa$ is absolutely continuous with respect to $\lambda$ (note that the Borel sets $A \subset \cl{\Omega}$ with $\lambda(\partial A \setminus \partial \Omega) = 0$ generate the Borel $\sigma$-algebra on $\cl{\Omega}$). Thus, by the Radon--Nikod\'{y}m theorem there exists a density $Q \in \Lrm^1(\cl{\Omega},\lambda)$ such that
\[
  \kappa = Q(x) \, \lambda(\di x).
\]
Denote furthermore the Lebesgue decomposition of $\lambda$ by
\[
  \lambda = R(x) \, \Lcal_x^d \restrict \Omega + \lambda^s,  \qquad\text{where $R \in \Lrm^1(\Omega)$ and $\lambda^s$ is singular to $\Lcal^d$.}
\]
Combining all the previous arguments, we have decomposed $\mu$ as follows:
\begin{align} 
  \mu(\di x \, \di w \, \di r) &= Q(x) \, \lambda(\di x) \otimes \eta_x(\di w) \otimes \sigma_{x,w}(\di r)  \notag \\
  &=  Q(x) \bigl(R(x) \, \Lcal^d_x \restrict \Omega + \lambda^s(\di x) \bigr) \otimes \eta_x(\di w) \otimes \sigma_{x,w}(\di r).   \label{eq:mu_decomp}
\end{align}

Define $V \in \Lrm^1(\cl{\Omega \times \C\B^N},\gamma;\C^N)$, $\nu_x \in \Mbf(\C\B^N;\C^N)$ and also let $\nu_x^\infty \in \Mbf(\partial \C\B^N;\C^N)$ be defined through its action on $\Crm(\partial \C\B^N)$ as follows (using the weak* measurability):
\begin{align*}
  V(x,w) &:= \int_{\cl{\C\B^N}} r \dd \sigma_{x,w}(r), \\
  \dprb{g,\nu_x} &:= Q(x) \, R(x) \, \int_{\C\B^N} (S^{p-1}g) \cdot  V(x,\frarg) \dd \eta_x,
    \hspace{-30pt}&& g \in \Crm_0(\C^N), \\
  \dprb{g^\infty,\nu_x^\infty} &:= Q(x) \, \int_{\partial \C\B^N} g^\infty \cdot V(x,\frarg) \dd \eta_x,
    && g^\infty \in \Crm(\partial \C\B^N).
\end{align*}
Then employ~\eqref{eq:omega_mu_Psi},~\eqref{eq:mu_decomp} and the fact that $S^{p-1}h(x,w) = h^\infty(x,w)$ for all $(x,w) \in \cl{\Omega} \times \partial \C\B^N$, to infer
\begin{align}
  \ddprb{f \otimes \overline{\Psi},\omega} &= \int_{\cl{\Omega}} \int_{\cl{\C\B^N}} \int_{\cl{\C\B^N}} (S^{p-1}h)(x,w) \cdot r \dd \sigma_{x,w}(r) \dd \eta_x(w) \; Q(x) \dd \lambda(x)  \notag \\
  &= \int_{\cl{\Omega}} Q(x) \int_{\C\B^N} (S^{p-1}h)(x,w) \cdot V(x,w) \dd \eta_x(w) \dd \lambda(x)  \notag \\
  &\qquad + \int_{\cl{\Omega}} Q(x) \int_{\partial\C\B^N} (S^{p-1}h)(x,w) \cdot V(x,w) \dd \eta_x(w) \dd \lambda(x)  \notag\\  
  &= \int_{\Omega} \dprb{h(x,\frarg),\nu_x} \dd x
    + \int_{\cl{\Omega}} \dprb{h^\infty(x,\frarg),\nu_x^\infty} \dd \lambda(x)   \label{eq:omega_decomp} \\
  &\qquad + \int_{\cl{\Omega}} Q(x) \int_{\C\B^N} (S^{p-1}h)(x,w) \cdot V(x,w) \dd \eta_x(w) \dd \lambda^s(x). \notag
\end{align}

We will show next that in fact the last integral in~\eqref{eq:omega_decomp} vanishes. To this aim take any $f(x,z,q) := \phi(x) g(z) \cdot q \in \Fbf^p(\Omega;\C^N)$ ($\phi \in \Crm(\cl{\Omega})$, $g \in \Crm(\C^N;\C^N)$) with the property that $S^{p-1}g \in \Crm_c(\C\B^N;\C^N)$ and hence $g^\infty \equiv 0$. The family of functions $\{g(u_j)\}_j$ is uniformly bounded and thus, by virtue of Lemma~\ref{eq:equiint_product}, the functions
\[
  w_{j,R} := g(u_j) \cdot \overline{T_{(1-\eta_R)\Psi}[u_j]}
\]
are ($1$-)equiintegrable (with respect to both $j$ and $R$). Hence, this family is $\Lrm^1$-weakly compact by the Dunford--Pettis theorem. Let $w_{j,R} \toweak w$ in $\Lrm^1(\Omega)$ as $j \to \infty$ and $R \to \infty$ (in this order). Then,
\[
  \ddprb{f \otimes \overline{\Psi},\omega} = \int \phi w \dd x.
\]
This has to equal the expression~\eqref{eq:omega_decomp} for $\ddpr{f \otimes \overline{\Psi},\omega}$ and, differentiating the integral with respect to the domain (varying $\phi$), we get
\[
  Q(x) \int_{\C\B^N} (S^{p-1}g)(w) \cdot V(x,w) \dd \eta_x(w) = 0
  \qquad\text{for $\lambda^s$-a.e.\ $x \in \cl{\Omega}$.}
\]
Finally, varying $S^{p-1} g$ over all of $\Crm_c(\C\B^N;\C^N)$, we may conclude
\[
  Q(x) V(x,w) \, (\eta_x \restrict \C\B^N)(\di w) = 0
  \qquad\text{for $\lambda^s$-a.e.\ $x \in \cl{\Omega}$,}
\]
which shows the claim.

\proofstep{Step 3.}
We now establish that one can replace $\lambda$ by $\lambda_\omega$ as defined in~\eqref{eq:lambda_omega}. For this, observe that if $h(x,z)$ is zero on a neighborhood of $z = 0$, we can replace $\mu_{j,R}^\Psi$ by
\[
  \nu_{j,R}^\Psi := \abs{u_j(x)}^{p-1} \, (1+\abs{v_{j,R}(x)}) \, \Lcal_x^d \restrict \Omega \otimes \delta_{\frac{u_j(x)}{1+\abs{u_j(x)}}} \otimes \delta_{\frac{v_{j,R}(x)}{1+\abs{v_{j,R}(x)}}}.
\]
Analogously to~\eqref{eq:mu_int}, we have for all $f(x,z,q) = h(x,z) \cdot q \in \Fbf^p(\Omega;\C^N)$,
\[
  \int_\Omega h(\frarg,u_j) \cdot \overline{T_{(1-\eta_R)\Psi}[u_j]} \dd x
  = \int \frac{(S^{p-1}h)(x,w)}{\abs{w}^{p-1}}\cdot r \,\dd \nu_{j,R}^\Psi(x,w,r).
\]
Now carry out the same procedure as in the first two steps of the proof; note in passing that~\eqref{eq:kappa_est} still holds with $\lambda$ replaced by $\lambda_\omega$. We define $\bar{\nu}_x, \bar{\nu}_x^\infty$ in the same way as $\nu_x, \nu_x^\infty$. Transforming $(S^{p-1}h)(x,w)/\abs{w}^{p-1}$ via $S^{-(p-1)}$, we arrive at
\[
  \ddprb{f \otimes \overline{\Psi},\omega} = \int_{\Omega} \dprBB{h(x,\frarg) \biggl(\frac{1+\abs{\frarg}}{\abs{\frarg}} \biggr)^{p-1},\bar{\nu}_x} \dd x + \int_{\cl{\Omega}} \dprb{h^\infty(x,\frarg),\bar{\nu}_x^\infty} \dd \lambda_\omega(x)
\]
for different $(\bar{\nu}_x)_x$, $(\bar{\nu}_x^\infty)_x$ of course. We now compare this to~\eqref{eq:omega_decomp} for all integrands $f(x,z,q) = \phi(x)g^\infty(z)(1-\rho(z/K)) \cdot q \in \Fbf^p(\Omega;\C^N)$, where $g^\infty \in \Crm(\C^N \setminus \{0\};\C^N)$ is positively $(p-1)$-homogeneous and $\rho \in \Crm_c^\infty(\R^d;[0,1])$ with $\ONE_{B(0,1)} \leq \rho \leq \ONE_{B(0,2)}$. Letting $K \toup \infty$, we get
\[
  \int_{\cl{\Omega}} \dprb{g^\infty,\nu_x^\infty} \dd \lambda(x)
  = \int_{\cl{\Omega}} \dprb{g^\infty,\bar{\nu}_x^\infty} \dd \lambda_\omega(x)
  \qquad \text{for all $g^\infty \in \Crm(\partial \C\B^N)$.}
\]
Thus, we can just replace $\lambda$ by $\lambda_\omega$ and $(\nu_x^\infty)_x$ by $(\bar{\nu}_x^\infty)_x$ in~\eqref{eq:omega_decomp}; we drop the bar in the following.

\proofstep{Step 4.}
So far the proof has established
\[
  \ddprb{f \otimes \overline{\Psi},\omega} = \int_{\Omega} \dprb{h(x,\frarg),\nu_x} \dd x
    + \int_{\cl{\Omega}} \dprb{h^\infty(x,\frarg),\nu_x^\infty} \dd \lambda_\omega(x)
\]
for all $f \in \Fbf^p(\Omega;\C^N)$ and a fixed $\Psi \in \Mcal$. We now re-introduce the dependence on $\Psi$ and immediately observe that $\nu_x$ and $\nu_x^\infty$ must depend anti-linearly on $\Psi$. Indeed, the left hand side is anti-linear in $\Psi$ by~\eqref{eq:omega} and the measure $\lambda_\omega$ does not depend on $\Psi$ by construction.

Thus, we can in fact write
\[
  \ddprb{f \otimes \overline{\Psi},\omega} = \int_{\Omega} \dprb{h(x,\frarg) \otimes \overline{\Psi}, \omega_x} \dd x
    + \int_{\cl{\Omega}} \dprb{h^\infty(x,\frarg) \otimes \overline{\Psi}, \omega_x^\infty} \dd \lambda_\omega(x),
\]
where now for $\Lcal^d$-a.e.\ fixed $x \in \Omega$, $\omega_x$ is an anti-linear bounded function from $\Mcal$ to the space of $\C^N$-valued Radon measures on $\C\B^N$; for the boundedness we also used the estimate~\eqref{eq:kappa_est}, whereby $Q$ is finite almost everywhere with respect to $\lambda_\omega$. Likewise, for $\lambda_\omega$-a.e.\ $x \in \cl{\Omega}$, $\omega_x^\infty$ is an anti-linear bounded function from $\Mcal$ into the space of $\C^N$-valued Radon measures on the unit sphere $\partial \C\B^N$. This shows properties (i)--(iii) from the definition of MCFs. Property (iv) follows by tracing the weak* measurability properties of the disintegrated measures in the above decomposition, whereas (v) holds by construction. This finishes the proof.
\end{proof}

\begin{remark}
Not every object satisfying Definition~\ref{def:MCF} is generated by at least one sequence, see Example~\ref{ex:MCF_nogen} for a counterexample.
\end{remark}

We close this section by a remark about how \emph{not} to define microlocal compactness forms:

\begin{remark}
Instead of~\eqref{eq:omega}, one might be tempted to define $\omega$ via
\[
  \omega(\phi,h,\Psi) := \lim_{R\to\infty} \lim_{j\to\infty} \int T_{(1-\eta_R)\Psi} \bigl[ \phi h(u_j) \bigr] \dd x,
\]
where $h$ is assumed to have $p$-growth. Unfortunately, $\phi h(u_j)$ in general is only in $\Lrm^1$ and $T_{(1-\eta_R)\Psi}$ is not a bounded operator from $\Lrm^1$ to itself; this follows from the characterization of $\Lrm^1$-Fourier multipliers as \emph{finite} complex-valued measures (see for example Theorem~2.5.8 in~\cite{Graf08CFA}). Thus, for a correct definition we need to weaken the concentration effects inside of $T_\Psi$ and this is exactly what is accomplished by our definition.
\end{remark}

\subsection{Elementary properties}

This section collects a few elementary properties of MCFs, which will be used countless times in the sequel. We start with the following two useful technical lemmas:

\begin{lemma} \label{lem:pq_conc}
Let $(u_j) \subset \Lrm^p(\Omega;\C^N)$ be a norm-bounded sequence that generates a $p$-microlocal compactness form $\omega^p \in \MCF^p(\Omega;\C^N)$, where $\Omega \subset \R^d$ is bounded and $1 < p < \infty$. Then, for all $1 < r < p$ the sequence $(u_j)$ also generates an $r$-microlocal compactness form $\omega^r \in \MCF^r(\Omega;\C^N)$ with
\begin{align*}
  &(\omega^r)_x = (\omega^p)_x  &&\text{for $\Lcal^d$-a.e.\ $x \in \Omega$}
  \qquad\text{and} &&&&&&&&&& \\
  &(\omega^r)_x^\infty = 0 &&\text{for $\lambda_{\omega^r}$-a.e.\ $x \in \cl{\Omega}$.} &&&&&&&&&&
\end{align*}
Furthermore, if $(u_j)$ is norm-bounded in $\Lrm^\infty(\Omega;\C^N)$, then $(\omega^p)_x$ has compact support for $\Lcal^d$-a.e.\ $x \in \Omega$ and $(\omega^p)_x^\infty = 0$ for $\lambda_{\omega^p}$-a.e.\ $x \in \cl{\Omega}$.
\end{lemma}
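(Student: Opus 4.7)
The key structural observation driving the proof is that test functions in $\Fbf^r(\Omega;\C^N)$ sit naturally inside $\Fbf^p(\Omega;\C^N)$ whenever $r < p$, and their $(p-1)$-recession function vanishes identically. Indeed, for $f(x,z,q) = h(x,z) \cdot q \in \Fbf^r$, the identity $S^{p-1}h(x,w) = (1-\abs{w})^{p-r} S^{r-1}h(x,w)$ holds on $\cl{\Omega \times \C\B^N}$; since $p > r$, the factor $(1-\abs{w})^{p-r}$ vanishes on $\partial \C\B^N$, so $S^{p-1}h$ extends continuously to the sphere and vanishes there, which by~\eqref{eq:h_infty} is equivalent to $h^{\infty,p} \equiv 0$.

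Since $\Omega$ is bounded, H\"older's inequality guarantees that $(u_j)$ is norm-bounded in $\Lrm^r(\Omega;\C^N)$ as well. Together with the inclusion just observed, the existence of the double limit in~\eqref{eq:omega} for every $f \in \Fbf^p$ along the chosen subsequence forces its existence for every $f \in \Fbf^r$ along the \emph{same} subsequence. Hence $(u_j)$ generates an $r$-MCF $\omega^r$ without any further extraction, and
\[
\ddprb{f \otimes \overline{\Psi}, \omega^p} = \ddprb{f \otimes \overline{\Psi}, \omega^r}
\qquad \text{for all } f \in \Fbf^r(\Omega;\C^N), \ \Psi \in \Mcal.
\]
Expanding both sides via~\eqref{eq:omega_ddpr} and using $h^{\infty,p} \equiv 0$ on the left reduces the identity to
\[
\int_\Omega \dprb{h \otimes \overline{\Psi}, (\omega^p)_x} \dd x = \int_\Omega \dprb{h \otimes \overline{\Psi}, (\omega^r)_x} \dd x + \int_{\cl{\Omega}} \dprb{h^{\infty,r} \otimes \overline{\Psi}, (\omega^r)^\infty_x} \dd \lambda_{\omega^r}(x),
\]
where $h^{\infty,r}$ is the $(r-1)$-recession. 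Testing first with $h$ of the form $\phi(x) g(z)$, $g \in \Crm_c(\C^N;\C^N)$ (so $h^{\infty,r} \equiv 0$) and varying $\phi, g, \Psi$ over countable dense families identifies $(\omega^p)_x = (\omega^r)_x$ for $\Lcal^d$-a.e.\ $x$. Substituting back and realizing an arbitrary $G \in \Crm(\cl{\Omega} \times \partial\C\B^N;\C^N)$ as $h^{\infty,r}$ of some $h \in \Fbf^r$ (e.g.\ $h(x,z) := \chi(\abs{z}) \abs{z}^{r-1} G(x, z/\abs{z})$ with $\chi$ smooth, zero near the origin and one at infinity) then yields $(\omega^r)^\infty_x = 0$ $\lambda_{\omega^r}$-a.e.

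For the $\Lrm^\infty$-addendum, set $M := \supmod_j \norm{u_j}_\infty < \infty$. Any $h \in \Fbf^p$ that is $z$-compactly supported inside $\{\abs{z} > M\}$ satisfies $h(\frarg, u_j) \equiv 0$ and $h^{\infty,p} \equiv 0$, so~\eqref{eq:omega} and~\eqref{eq:omega_ddpr} immediately give $\dprb{h \otimes \overline{\Psi}, (\omega^p)_x} = 0$ for $\Lcal^d$-a.e.\ $x$; varying $h$ of tensor form establishes that $(\omega^p)_x$ is supported in $\cl{B(0,M)}$. Next, for arbitrary $\phi \in \Crm(\cl\Omega)$, $G \in \Crm(\partial\C\B^N;\C^N)$ and $\Psi \in \Mcal$, choose $h(x,z) := \phi(x) \chi(\abs{z}) \abs{z}^{p-1} G(z/\abs{z})$ with $\chi$ smooth, $\chi \equiv 0$ on $\{\abs{z} \leq M\}$ and $\chi \equiv 1$ near infinity. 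Then $h(\frarg, u_j) \equiv 0$, the oscillation integral vanishes by the compact support just proved, and only the concentration term survives, forcing
\[
\int_{\cl{\Omega}} \phi(x) \dprb{G \otimes \overline{\Psi}, (\omega^p)^\infty_x} \dd \lambda_{\omega^p}(x) = 0;
\]
varying $\phi, G, \Psi$ yields $(\omega^p)^\infty_x = 0$ $\lambda_{\omega^p}$-a.e.

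The main conceptual step is the first one, namely recognizing that $\Fbf^r \subset \Fbf^p$ with vanishing $(p-1)$-recession; this is what allows the $p$-duality pairing to be used as a direct probe of the $r$-MCF. Everything thereafter is a systematic exploitation of the defining identity~\eqref{eq:omega_ddpr} combined with standard density and localization arguments in the spatial, vertical, and multiplier variables.
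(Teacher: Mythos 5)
Your proof is correct and follows essentially the same route as the paper: embed $\Lrm^p(\Omega;\C^N)$ into $\Lrm^r(\Omega;\C^N)$, observe that $\Fbf^r(\Omega;\C^N) \subset \Fbf^p(\Omega;\C^N)$ with vanishing $(p-1)$-recession, equate the two duality pairings and vary $h$, $\Psi$ to identify the oscillation parts and kill the concentration part, and handle the $\Lrm^\infty$ case with test functions supported away from the ball of radius $\supmod_j \norm{u_j}_\infty$. You merely spell out the density and localization steps that the paper's proof leaves implicit.
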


\begin{proof}
Since $\Lrm^p(\Omega;\C^N)$ embeds into $\Lrm^r(\Omega;\C^N)$ on our bounded $\Omega$, the generation of $\omega^r$ follows directly from Theorem~\ref{thm:omega}. Let $f (x,z,q) = h(x,z) \cdot q \in \Fbf^r(\Omega;\C^N)$ and $\Psi \in \Mcal$. Let us denote by $h^{(r-1)-\infty}(x,z)$ the $(r-1)$-recession function of $h$ as in~\eqref{eq:h_infty}. Clearly, $h^{(p-1)-\infty}$ exists as well and is identically zero. As $f \in\Fbf^r(\Omega;\C^N) \subset \Fbf^p(\Omega;\C^N)$, we immediately get
\begin{align*}
  &\int_\Omega \dprb{h(x,\frarg) \otimes \overline{\Psi},(\omega^r)_x} \dd x + \int_{\cl{\Omega}} \dprb{h^{(r-1)-\infty}(x,\frarg) \otimes \overline{\Psi},(\omega^r)_x^\infty} \dd \lambda_{\omega^r}(x) \\ 
  &\qquad = \ddprb{f \otimes \overline{\Psi},\omega^r}
    = \ddprb{f \otimes \overline{\Psi},\omega^p} \\
  &\qquad= \int_\Omega \dprb{h(x,\frarg) \otimes \overline{\Psi},(\omega^p)_x} \dd x.
\end{align*}
Varying $h$ and $\Psi$, this implies the conclusions of the lemma. The additional statement is evident from the definition since for $f(x,z,q) = h(x,z) \cdot q$ such that $\supp h(x,\frarg) \subset\subset \C^N \setminus R \cl{\C\B^N}$, where $R := \supmod_j \norm{u_j}_\infty$, we have $\ddprb{f \otimes \overline{\Psi},\omega^p} = 0$.
\end{proof}

\begin{lemma} \label{lem:countable_test}
There exists a countable set $\{f_m\}_{m \in \N} \subset \Fbf^p(\Omega;\C^N)$ with
\[
  f_m(x,z,q) = \phi_m(x) g_m(z) \cdot q
  \qquad\text{where $\phi_m \in \Crm^\infty(\cl{\Omega})$, $g_m \in \Crm^\infty(\C^N;\C^N)$,}
\]
and a countable set $\{\Psi_n\}_{n \in \N} \subset \Mcal$ with $\Psi_n \in \Crm^\infty(\Sbb^{d-1};\C^{N \times N})$, such that for any two MCFs $\omega_1, \omega_2 \in \MCF^p(\Omega;\C^N)$,
\[
  \ddprb{f_m \otimes \overline{\Psi}_n,\omega_1} = \ddprb{f_m \otimes \overline{\Psi}_n,\omega_2} \quad\text{for all $m,n \in \N$}
  \qquad\text{implies}\qquad
  \omega_1 = \omega_2.
\]
\end{lemma}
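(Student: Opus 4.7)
My plan is to produce countable dense subsets of $\Fbf^p(\Omega;\C^N)$ and of $\Mcal$ in their natural norms, and then to extend coincidence from these subsets to the full product by sesquilinearity and by continuity of the MCF-pairing~\eqref{eq:omega_ddpr}.

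For the test-function factor, I would first observe that $f(x,z,q) = h(x,z) \cdot q \mapsto S^{p-1}h$ is, by the very definition of $\Fbf^p$ and its norm, an isometric isomorphism of $\Fbf^p(\Omega;\C^N)$ onto $\Crm(\cl{\Omega \times \C\B^N};\C^N)$ equipped with the sup norm. Since $\cl{\Omega \times \C\B^N}$ is a compact metric space, the latter is separable, and the Stone--Weierstrass theorem applied to the product yields a countable dense family of tensor form $(x,w) \mapsto \tilde\phi_m(x) \tilde g_m(w) \cdot e_k$ with $\tilde\phi_m \in \Crm^\infty(\cl{\Omega})$, $\tilde g_m \in \Crm^\infty(\cl{\C\B^N})$, and $e_k$ from a fixed basis of $\C^N$. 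Pulling back through $S^{-(p-1)}$, which acts only in the $z$-variable, preserves the factorization into a product of a function of $x$ and a function of $z$, and after a routine reindexing yields the required countable family $\{f_m\}$ in the stated tensor form.

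For the multiplier factor, positive $0$-homogeneity identifies $\Mcal(\Sbb^{d-1};\C^{N\times N})$ with a subspace of $\Crm^{\lfloor d/2 \rfloor + 1}(\Sbb^{d-1};\C^{N\times N})$, and a chain-rule computation shows that $\norm{\cdot}_\Mcal$ is equivalent on this subspace to the $\Crm^{\lfloor d/2\rfloor+1}$-norm on $\Sbb^{d-1}$ (radial derivatives being controlled by tangential ones via homogeneity). Since $\Crm^k$ on a compact manifold is separable (using, for instance, restrictions of polynomials or spherical harmonics), I would then pick a countable $\norm{\cdot}_\Mcal$-dense set $\{\Psi_n\} \subset \Crm^\infty(\Sbb^{d-1};\C^{N\times N})$.

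It then remains to check that, for every $\omega \in \MCF^p(\Omega;\C^N)$, the pairing $\ddpr{f \otimes \overline{\Psi},\omega}$ is continuous in $f$ at fixed $\Psi$ and in $\Psi$ at fixed $f$, with respect to the two norms above. Here I would exploit the measure-valued representation of $\omega_x$ and $\omega_x^\infty$ indicated in the remarks following Theorem~\ref{thm:omega}: for fixed $\Psi$, the anti-linear functional $g \mapsto \dpr{g \otimes \overline{\Psi},\omega_x}$ is represented by a $\C^N$-valued Radon measure on $\cl{\C\B^N}$, which in combination with property~(v) of Definition~\ref{def:MCF} yields a pointwise estimate $\abs{\dpr{h(x,\sbullet)\otimes \overline{\Psi},\omega_x}} \leq \norm{f}_{\Fbf^p} \cdot M^\Psi(x)$ with $M^\Psi \in \Lrm^1(\Omega)$, and analogously against $\lambda_\omega$ for the concentration part. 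Granted this domination, by sesquilinearity, coincidence on $\{(f_m, \Psi_n)\}$ extends to the $\C$-linear span, and then by density to all of $\Fbf^p \times \Mcal$, so $\omega_1 = \omega_2$.

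The hard part will be this continuity step, since Definition~\ref{def:MCF} does not directly supply a uniform operator bound for the sesquilinear forms $\omega_x$ as a function of $x$; one must combine the measure-theoretic representation with the integrability assertion~(v) to obtain the dominated-convergence-style estimate that drives the density argument. Once the estimate is in hand, the rest is standard approximation on a separable space.
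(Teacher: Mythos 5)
Your plan matches the paper's proof almost step for step: the paper also reduces to the construction of countable dense families, building the $\Fbf^p$-side by choosing countable dense smooth subsets $\{\phi_m\}\subset \Crm(\cl{\Omega})$ and $\{\tilde g_m\}\subset \Crm(\cl{\C\B^N};\C^N)$ and pulling back via $S^{-(p-1)}$ (precisely your $\Fbf^p \cong \Crm(\cl{\Omega\times\C\B^N};\C^N)$ identification plus Stone--Weierstrass on the product), and the $\Mcal$-side by taking a smooth dense subset of $\Crm^{\floor{d/2}+1}(\Sbb^{d-1};\C^{N\times N})$, exactly as you propose. So this is essentially the same route, not a genuinely different one.

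The one place where you are more careful than the paper is the continuity step. The paper closes the density argument by invoking the basic estimate~\eqref{eq:omega_est}, which strictly speaking is proved only for MCFs \emph{generated} by an $\Lrm^p$-bounded sequence, while the lemma is stated for arbitrary $\omega\in\MCF^p(\Omega;\C^N)$ in the sense of Definition~\ref{def:MCF}. You are right to flag that Definition~\ref{def:MCF} by itself does not hand you a uniform operator bound on $x\mapsto\omega_x$. I would, however, push back a little on your proposed repair: property~(v) only provides $\Lrm^1$-integrability of $x\mapsto\dpr{\abs{z}^{p-2}z\otimes\overline\Psi,\omega_x}$, i.e.\ of one particular pairing, which after the $S^{p-1}$-transform corresponds to the test function $w\mapsto\abs{w}^{p-2}w$ on $\cl{\C\B^N}$; this vanishes at $w=0$ and hence does not control the full total variation $\abs{\omega_x(\Psi)}(\cl{\C\B^N})$ that you would need for the dominated-convergence bound $M^\Psi\in\Lrm^1(\Omega)$. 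So the gap you identify in the abstract setting is not fully closed by~(v) alone. In practice the lemma is only applied to generated MCFs (in the diagonal extraction of Theorem~\ref{thm:omega} and in Lemma~\ref{lem:better_generation}), where~\eqref{eq:omega_est} applies and the issue evaporates; but as a criticism of the paper's formulation your observation is well placed.
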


\begin{proof}
In view of the linearity and the fundamental estimate~\eqref{eq:omega_est}, it suffices to construct dense subsets $\{f_m\}_{m \in \N} \subset \Fbf^p(\Omega;\C^N)$ and $\{\Psi_n\}_{n \in \N} \subset \Mcal$ with the stated properties. For the second set we may simply use a smooth dense subset of $\Crm^{\floor{d/2} + 1}(\Sbb^{d-1};\C^{N \times N})$. For the first set, we choose  a countable dense subset $\{\phi_m\}_{m \in \N} \subset \Crm(\cl{\Omega})$ and a countable dense subset $\{\tilde{g}_m\}_{m \in \N} \subset \Crm(\cl{\C\B^N};\C^N)$ consisting only of smooth functions. Then, define $g_m := S^{-(p-1)} \tilde{g}_m$ and set $f_m(x,z,q) := \phi_m(x) g_m(z) \cdot q$.
\end{proof}

The next result concerns the choice of \enquote{good} generating sequences:

\begin{lemma} \label{lem:better_generation}
Let $(u_j) \subset \Lrm^p(\Omega;\C^N)$ generate $\omega \in \MCF^p(\Omega;\C^N)$. Then, there exists a sequence $(v_j) \subset (\Lrm^p \cap \Crm_c^\infty)(\Omega;\C^N)$ that also generates $\omega$.
\end{lemma}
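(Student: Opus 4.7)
The approach is a density argument combined with a diagonal selection: I want to approximate each $u_j$ by a smooth compactly supported function $v_j$ that is close to $u_j$ in $\Lrm^p$ and, more importantly, close to $u_j$ in all the finitely many defining pairings needed at stage $j$. By Lemma~\ref{lem:countable_test} it suffices to verify the generating identity~\eqref{eq:omega} only on a fixed countable family $\{(f_k,\Psi_k)\}_{k\in\N}$ of smooth test objects with $f_k(x,z,q)=h_k(x,z)\cdot q$. Set
\[
  A_k(w,R):=\int_\Omega h_k(x,w(x))\cdot\overline{T_{(1-\eta_R)\Psi_k}[w](x)}\dd x,
  \qquad w\in\Lrm^p(\Omega;\C^N),\ R>0,
\]
so that by hypothesis $\lim_{R\to\infty}\lim_{j\to\infty}A_k(u_j,R)=\ddpr{f_k\otimes\overline{\Psi}_k,\omega}$ for every $k$.

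The core step is the following continuity claim: for each fixed $k\in\N$ and $R>0$, the functional $w\mapsto A_k(w,R)$ is continuous on $\Lrm^p(\Omega;\C^N)$. Indeed, if $w_n\to w$ in $\Lrm^p$, then $w_n\to w$ in measure and the family $(1+\abs{w_n})^p$ is $\Lrm^1$-equiintegrable (since $\norm{w_n-w}_p\to 0$ gives $|w_n-w|^p\to 0$ in $\Lrm^1$ and $\abs{w_n}^p\leq 2^{p-1}(\abs{w_n-w}^p+\abs{w}^p)$). The $(p-1)$-growth bound~\eqref{eq:h_est} gives $\abs{h_k(\cdot,w_n)}^{p'}\leq C(1+\abs{w_n})^p$, so Vitali's theorem yields $h_k(\cdot,w_n)\to h_k(\cdot,w)$ in $\Lrm^{p'}(\Omega;\C^N)$; on the other hand, the Mihlin estimate~\eqref{eq:T_psi_est} applied to the multiplier $(1-\eta_R)\Psi_k$ gives $T_{(1-\eta_R)\Psi_k}[w_n]\to T_{(1-\eta_R)\Psi_k}[w]$ in $\Lrm^p(\R^d;\C^N)$. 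H\"older's inequality then forces $A_k(w_n,R)\to A_k(w,R)$.

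Using this continuity together with the density of $\Crm_c^\infty(\Omega;\C^N)$ in $\Lrm^p(\Omega;\C^N)$ (realised concretely, for example, by $\phi_j\cdot(\rho_{\eps_j}\conv\tilde u_j)$, where $\tilde u_j$ is the zero-extension of $u_j$, $\rho_{\eps_j}$ a standard mollifier, and $\phi_j\in\Crm_c^\infty(\Omega)$ an interior cutoff), I would pick for each $j\in\N$ a function $v_j\in\Crm_c^\infty(\Omega;\C^N)$ satisfying
\[
  \norm{v_j-u_j}_p<\tfrac{1}{j}
  \quad\text{and}\quad
  \absb{A_k(v_j,R)-A_k(u_j,R)}<\tfrac{1}{j}
  \ \ \text{for all }k,R\in\{1,\dots,j\}.
\]
This is possible because at stage $j$ only finitely many pairs $(k,R)$ need to be controlled, and the continuity established above makes each of these quantities small as soon as the approximant is sufficiently close to $u_j$ in $\Lrm^p$. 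With such a diagonal choice, fixing $k$ and any integer $R$ and letting $j\to\infty$ gives $\lim_j A_k(v_j,R)=\lim_j A_k(u_j,R)$; then sending $R\to\infty$ along the integers and invoking the independence from the sequence $R\to\infty$ from property~(A) of Theorem~\ref{thm:omega}, I conclude that $(v_j)$ also generates $\omega$.

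The main obstacle is the continuity assertion for $w\mapsto A_k(w,R)$ on $\Lrm^p$, and specifically the continuity of the Nemyckii-type map $w\mapsto h_k(\cdot,w)$ from $\Lrm^p$ into $\Lrm^{p'}$. This is where the built-in $(p-1)$-growth of elements of $\Fbf^p(\Omega;\C^N)$ is essential and where Vitali's theorem (rather than naive dominated convergence) has to be used; the rest of the argument is then a routine diagonal extraction and relies only on the classical multiplier bounds already recalled in Section~\ref{ssc:multipliers}.
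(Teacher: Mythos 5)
Your overall strategy is the same as the paper's (reduce to the countable family of Lemma~\ref{lem:countable_test}, mollify each $u_j$, control finitely many pairings at stage $j$, and conclude by a diagonal argument), and your continuity step is sound: the Vitali/Nemytskii argument for $w\mapsto h_k(\cdot,w)$ from $\Lrm^p$ into $\Lrm^{p/(p-1)}$ plays exactly the role of the Pratt-theorem estimate in the paper, and the Mihlin bound handles the multiplier factor.

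The gap is in the endgame. You enforce $\absn{A_k(v_j,R)-A_k(u_j,R)}<1/j$ only for \emph{integer} $R\in\{1,\dots,j\}$, and then assert that $(v_j)$ generates $\omega$ by \enquote{invoking the independence from the sequence $R\to\infty$ from property~(A)}. That invocation is circular at this point: property~(A) is a statement about a sequence that is already known to generate an MCF, whereas generation in the sense of~\eqref{eq:omega} requires the inner limit $\lim_j A_{f,\Psi}(v_j,R)$ to exist for \emph{every} large real $R$ (indeed for every sequence $R\to\infty$) and for \emph{all} $f\in\Fbf^p(\Omega;\C^N)$, $\Psi\in\Mcal$, not just for integer $R$ and the countable family; neither of these is established by your selection. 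Two standard repairs: (i)~do what the paper does and make the closeness uniform in $R$ --- the estimate $\absn{A_k(v,R)-A_k(u_j,R)}\le C\normn{h_k(\cdot,u_j)-h_k(\cdot,v)}_{p/(p-1)}+C\normn{\Psi_k}_\Mcal\normn{u_j-v}_p$ has constants independent of $R$, since $\normn{T_{(1-\eta_R)\Psi_k}}_{\Lrm^p\to\Lrm^p}$ is bounded uniformly in $R$ (cf.~\eqref{eq:omega_n}); then the inner limits for $(v_j)$ exist at every $R$ and coincide with those for $(u_j)$, and the extension from the countable family to all $f\otimes\overline{\Psi}$ follows from the uniform bilinear bound by density; or (ii)~keep your integer-$R$ control, note that $(v_j)$ is $\Lrm^p$-bounded, extract via Theorem~\ref{thm:omega} a subsequence generating some $\omega'$, and only \emph{then} use property~(A) (for that subsequence) together with Lemma~\ref{lem:countable_test} to identify $\omega'=\omega$; since the lemma only claims existence of \emph{some} smooth generating sequence, this subsequence finishes the proof. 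As written, neither repair is in your argument, so the final deduction does not yet go through.
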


\begin{proof}
Take countable families $\{f_m(x,z,q) = h_m(x,z) \cdot q \}_{m \in \N} \subset \Fbf^p(\Omega;\C^N)$ and $\{\Psi_n\}_{n \in \N} \subset \Mcal$ as in Lemma~\ref{lem:countable_test}. For each fixed $j \in \N$ let $(v_j^{(k)})_k \in (\Lrm^p \cap \Crm_c^\infty)(\Omega;\C^N)$ with $v_j^{(k)} \to u_j$ in $\Lrm^p(\Omega;\C^N)$ and almost everywhere as $k \to \infty$; this can be constructed by mollification. We can furthermore require
\[
  \absBB{\int_\Omega h_m(\frarg,u_j) \cdot \overline{T_{(1-\eta_R)\Psi_n}[u_j]} \dd x - \int_\Omega h_m(\frarg,v_j^{(n)}) \cdot \overline{T_{(1-\eta_R)\Psi_n}[v_j^{(n)}]} \dd x} \leq \frac{1}{j}
\]
whenever $m,n \leq j$. Indeed, the preceding expression can be estimated by
\begin{align*}
  &\absBB{\int_\Omega \bigl[h_m(\frarg,u_j) - h_m(\frarg,v_j^{(k)})\bigr] \cdot \overline{T_{(1-\eta_R)\Psi_n}[u_j]} \\
  &\qquad\qquad + h_m(\frarg,v_j^{(k)}) \cdot \overline{T_{(1-\eta_R)\Psi_n}[u_j-v_j^{(k)}]} \dd x} \\
  &\qquad \leq C \normb{h_m(\frarg,u_j)-h_m(\frarg,v_j^{(k)})}_{p/(p-1)}
    + C \normb{T_{(1-\eta_R)\Psi_n}[u_j-v_j^{(k)}]}_p.
\end{align*}
The first term goes to zero as $k \to \infty$ by Pratt's Theorem since $v_j^{(k)} \to u_j$ a.e.\ and we have the converging $p/(p-1)$-integrable majorants $C (1+\abs{u_j}+\abs{v_j^{(k)}})^{p-1}$. The second expression goes to zero by a standard multiplier estimate. We conclude the proof via a standard diagonal argument.
\end{proof}

Next, the Commutation Lemma~\ref{lem:commutation} implies the following important technical result:

\begin{lemma} \label{lem:phi_exchange}
Let $(u_j) \subset \Lrm^p(\Omega;\C^N)$ generate $\omega \in \MCF^p(\Omega;\C^N)$. Then, for all $f \in \Fbf^p(\Omega;\C^N)$ with $f(x,z,q) = h(x,z) \cdot q$, all $\Psi \in \Mcal$, and all $\phi \in \Crm_c(\R^d)$, it holds that
\begin{align*}
  \ddprb{\phi f \otimes \overline{\Psi},\omega} &= \lim_{R\to\infty} \lim_{j\to\infty} \int_\Omega \phi h(\frarg,u_j) \cdot \overline{T_{(1-\eta_R)\Psi}[u_j]} \dd x \\
  &= \lim_{R\to\infty} \lim_{j\to\infty} \int_\Omega h(\frarg,u_j) \cdot \overline{T_{(1-\eta_R)\Psi}[\overline{\phi} u_j]} \dd x.
\end{align*}
\end{lemma}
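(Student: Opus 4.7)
Since $\phi \in \Crm_c(\R^d)$ restricts to a continuous function on $\cl{\Omega}$, the test integrand $(\phi f)(x,z,q) = \phi(x) h(x,z) \cdot q$ still lies in $\Fbf^p(\Omega;\C^N)$ (the recession function is $\phi \cdot h^\infty$, and $S^{p-1}(\phi h) = \phi \cdot S^{p-1}h$ remains continuous on $\cl{\Omega \times \C\B^N}$). Theorem~\ref{thm:omega} applied to $\phi f$ and $\Psi$ directly yields the first equality.

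\textbf{Reducing the second equality to a commutator.} A straightforward manipulation gives
\[
  \phi \cdot \overline{T_{(1-\eta_R)\Psi}[u_j]} - \overline{T_{(1-\eta_R)\Psi}[\overline{\phi}\, u_j]}
  = \overline{\bigl(M_{\overline{\phi}} T_{(1-\eta_R)\Psi} - T_{(1-\eta_R)\Psi} M_{\overline{\phi}}\bigr)[u_j]}
  = \overline{[M_{\overline{\phi}}, T_{(1-\eta_R)\Psi}][u_j]},
\]
so the difference of the two iterated limits equals $\lim_R \lim_j D_{j,R}$, where
\[
  D_{j,R} := \int_\Omega h(\frarg,u_j) \cdot \overline{[M_{\overline{\phi}}, T_{(1-\eta_R)\Psi}][u_j]} \dd x.
\]
The plan is to show $D_{j,R} \to 0$ in the iterated limit.

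\textbf{Controlling the commutator.} Split $T_{(1-\eta_R)\Psi} = T_\Psi - T_{\eta_R \Psi}$, so
\[
  [M_{\overline{\phi}}, T_{(1-\eta_R)\Psi}] = [M_{\overline{\phi}}, T_\Psi] - [M_{\overline{\phi}}, T_{\eta_R \Psi}].
\]
For the first summand, Lemma~\ref{lem:commutation} gives that $[M_{\overline{\phi}}, T_\Psi]$ is compact on $\Lrm^p$, so it sends the weakly convergent subsequence $u_j \toweak u$ (which we may assume by passing to a subsequence) to a strongly convergent one in $\Lrm^p$. For the second summand, factor $T_{\eta_R \Psi} = T_{\eta_R} T_\Psi = T_\Psi T_{\eta_R}$; since $\check{\eta}_R \in \Scal$ and each $u_j$ is supported in the bounded set $\Omega$, Schwartz decay provides a uniform integrable majorant that upgrades Lemma~\ref{lem:bandlim_compact} from $\Lrm^p_\loc$- to $\Lrm^p(\R^d)$-convergence, and $\Lrm^p$-boundedness of $T_\Psi$ and $M_{\overline{\phi}}$ then yields strong $\Lrm^p$-convergence of $[M_{\overline{\phi}}, T_{\eta_R \Psi}][u_j]$ as $j \to \infty$. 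Combining, $[M_{\overline{\phi}}, T_{(1-\eta_R)\Psi}][u_j] \to [M_{\overline{\phi}}, T_{(1-\eta_R)\Psi}][u]$ strongly in $\Lrm^p$. Since $h(\frarg, u_j)$ is bounded in $\Lrm^{p/(p-1)}$ by~\eqref{eq:h_est}, weak--strong pairing (extracting a further subsequence $h(\frarg, u_j) \toweak H$) gives
\[
  \lim_{j\to\infty} D_{j,R} = \int_\Omega H \cdot \overline{[M_{\overline{\phi}}, T_{(1-\eta_R)\Psi}][u]} \dd x.
\]

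\textbf{Vanishing as $R \to \infty$ and main obstacle.} The crux is that the low-frequency cutoff $(1-\eta_R)$ eventually annihilates the (fixed) limit function. Writing $T_{(1-\eta_R)\Psi}[v] = T_\Psi[v] - T_{\eta_R} T_\Psi[v]$ and invoking Lemma~\ref{lem:TetaR_conv}, the operator $T_{(1-\eta_R)\Psi}$ tends to zero in the strong operator topology on $\Lrm^p$. Applied to the fixed functions $u$ and $\overline{\phi} u$, this gives $[M_{\overline{\phi}}, T_{(1-\eta_R)\Psi}][u] \to 0$ strongly in $\Lrm^p$ as $R \to \infty$. A final H\"{o}lder estimate with $\|H\|_{p/(p-1)} < \infty$ yields $\lim_R \lim_j D_{j,R} = 0$, completing the proof. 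The main conceptual obstacle is the interaction of two non-commuting limits and the fact that $[M_{\overline{\phi}}, T_\Psi][u]$ itself is generally nonzero; it is exactly the outer cutoff $(1-\eta_R)$---which removes finite frequencies---that kills this contribution, reflecting the heuristic that $[M_{\overline{\phi}}, T_\Psi]$ is smoothing of order one and therefore carries essentially only low-frequency mass.
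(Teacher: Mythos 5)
Your proposal is correct and takes essentially the same route as the paper: reduce the second equality to the commutator term $\int_\Omega h(\frarg,u_j)\cdot\overline{[M_{\overline{\phi}},T_{(1-\eta_R)\Psi}][u_j]}\dd x$, pass to a subsequence with $u_j \toweak u$ and $h(\frarg,u_j)\toweak H$, use compactness (Lemma~\ref{lem:commutation} together with the band-limiting Lemma~\ref{lem:bandlim_compact}) to take the inner limit in $j$, and then annihilate the remaining term, which involves only the fixed functions $u$ and $\overline{\phi}u$, by Lemma~\ref{lem:TetaR_conv} as $R\to\infty$. The only cosmetic difference is that you split $[M_{\overline{\phi}},T_{(1-\eta_R)\Psi}]$ into $[M_{\overline{\phi}},T_\Psi]-[M_{\overline{\phi}},T_{\eta_R\Psi}]$ and treat the band-limited piece directly, whereas the paper simply notes that the Commutation Lemma also holds for the symbol $(1-\eta_R)\Psi$.
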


\begin{proof}
It suffices to show that
\[
  \lim_{R\to\infty} \lim_{j\to\infty} \int_\Omega h(\frarg,u_j) \cdot \overline{\bigl[\overline{\phi}(x) T_{(1-\eta_R)\Psi}[u_j] - T_{(1-\eta_R)\Psi}[\overline{\phi} u_j]\bigr]} \dd x = 0.
\]
Without loss of generality assume that $h(\frarg,u_j) \toweak H$ in $\Lrm^{p/(p-1)}(\Omega;\C^N)$ and that $u_j \toweak u$ in the space $\Lrm^p(\Omega;\C^N)$. This may involve taking a subsequence, but since we will prove the convergence to zero for all such subsequences, the result then also applies to the original sequence of $j$'s. So, the Commutation Lemma~\ref{lem:commutation} (which also holds with the symbol $(1-\eta_R)\Psi$ instead of $\Psi$) implies
\begin{align*}
  &\overline{\phi} T_{(1-\eta_R)\Psi}[u_j] - T_{(1-\eta_R)\Psi}[\overline{\phi} u_j] \\
  &\qquad = \overline{\phi} T_{(1-\eta_R)\Psi}[u_j - u] - T_{(1-\eta_R)\Psi}[\overline{\phi} (u_j-u)] \\
  &\qquad\qquad + \overline{\phi} T_{(1-\eta_R)\Psi}[u] - T_{(1-\eta_R)\Psi}[\overline{\phi} u] \\
  &\qquad \to 0 + \overline{\phi} T_{(1-\eta_R)\Psi}[u] - T_{(1-\eta_R)\Psi}[\overline{\phi} u]
    \qquad\text{as $j \to \infty$.}
\end{align*}
By Lemma~\ref{lem:TetaR_conv}, the remainder
\[
  \int H \cdot \overline{\bigl[ \overline{\phi} T_{(1-\eta_R)\Psi}[u] - T_{(1-\eta_R)\Psi}[\overline{\phi} u] \bigr]} \dd x
\]
converges to zero as $R\to\infty$.
\end{proof}

The last result in particular implies:

\begin{lemma}[Locality] \label{lem:locality}
Let the sequences $(u_j), (v_j) \subset \Lrm^p(\Omega;\C^N)$ generate $\omega_1, \omega_2 \in \MCF^p(\Omega;\C^N)$, respectively. If $u_j = v_j$ on an open subdomain $D \subset\subset \Omega$ then for all $\phi \in \Crm_c(D)$, $f \in \Fbf^p(\Omega;\C^N)$, $\Psi \in \Mcal$,
\[
  \ddprb{\phi f \otimes \overline{\Psi}, \omega_1} = \ddprb{\phi f \otimes \overline{\Psi}, \omega_2},
\]
and we write \enquote{$\omega_1 = \omega_2$ on $D$}.
\end{lemma}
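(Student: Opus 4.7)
The strategy is to reduce the claim to Lemma~\ref{lem:phi_exchange}, which lets us slide a multiplication operator from the \enquote{outside} of the integral into the Fourier multiplier on the \enquote{inside}. The point of doing this is that once $\phi$ appears both outside (via a slightly larger cutoff) and inside the integrand, both occurrences of $u_j$ are multiplied by functions supported in $D$, hence may be replaced by $v_j$.

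First I would introduce an auxiliary cutoff $\tilde\phi \in \Crm_c^\infty(D)$ with $\tilde\phi \equiv 1$ on $\supp \phi$, so that $\phi = \phi \tilde\phi$ and the function $\tilde f(x,z,q) := \tilde\phi(x) f(x,z,q) = \tilde\phi(x) h(x,z) \cdot q$ still lies in $\Fbf^p(\Omega;\C^N)$. Applying Lemma~\ref{lem:phi_exchange} to $\omega_1$ with test function $\tilde f$ and multiplier $\phi$ yields
\[
  \ddprb{\phi f \otimes \overline{\Psi},\omega_1}
  = \ddprb{\phi \tilde f \otimes \overline{\Psi},\omega_1}
  = \lim_{R\to\infty} \lim_{j\to\infty} \int_\Omega \tilde\phi(x)\, h(x,u_j) \cdot \overline{T_{(1-\eta_R)\Psi}[\overline{\phi} u_j]} \dd x.
\]

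Next I would use the localization hypothesis. Since $\supp \phi \subset D$ and $u_j = v_j$ on $D$, we have $\overline\phi u_j = \overline\phi v_j$ pointwise on $\Omega$. Similarly, since $\supp \tilde\phi \subset D$ and $u_j = v_j$ on $D$, the Carath\'eodory-type integrand $h$ satisfies $\tilde\phi(x) h(x,u_j(x)) = \tilde\phi(x) h(x,v_j(x))$ for a.e.\ $x \in \Omega$. Substituting these identities, the right-hand side equals
\[
  \lim_{R\to\infty} \lim_{j\to\infty} \int_\Omega \tilde\phi(x)\, h(x,v_j) \cdot \overline{T_{(1-\eta_R)\Psi}[\overline{\phi} v_j]} \dd x,
\]
which by a second application of Lemma~\ref{lem:phi_exchange}, now to the sequence $(v_j)$ generating $\omega_2$, is precisely $\ddprb{\phi \tilde f \otimes \overline{\Psi},\omega_2} = \ddprb{\phi f \otimes \overline{\Psi},\omega_2}$.

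There is essentially no serious obstacle here: the only nontrivial input is Lemma~\ref{lem:phi_exchange}, whose content (the vanishing of the commutator $[M_\phi, T_{(1-\eta_R)\Psi}]$ in the limit $R\to\infty$) is exactly what decouples the nonlocal operator $T_{(1-\eta_R)\Psi}$ from values of $u_j$ outside $D$. The introduction of $\tilde\phi$ is the only minor technical point, needed so that after the exchange both the factor multiplying $h(\cdot, u_j)$ and the factor multiplying $u_j$ inside the Fourier multiplier are supported in $D$.
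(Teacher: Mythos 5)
Your proof is correct and takes essentially the same route as the paper: both rest on Lemma~\ref{lem:phi_exchange} (hence on the Commutation Lemma) to slide a cutoff inside the Fourier multiplier so that both the factor multiplying $h(\cdot,u_j)$ and the factor multiplying $u_j$ under $T_{(1-\eta_R)\Psi}$ are supported in $D$, after which $u_j$ can be replaced by $v_j$ pointwise and the chain reversed for $\omega_2$. The only difference is cosmetic: the paper reduces to $\phi\geq 0$ and factorizes $\phi = \sqrt{\phi}\cdot\sqrt{\phi}$, whereas you use an asymmetric split $\phi = \phi\,\tilde\phi$ with a plateau cutoff $\tilde\phi\in\Crm_c^\infty(D)$, which avoids the sign reduction and is, if anything, slightly cleaner.
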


\begin{proof}
Assume without loss of generality that $\phi \geq 0$, otherwise consider separately the regions where $\phi$ has one sign. Then, by the previous lemma,
\begin{align*}
  \ddprb{\phi f \otimes \overline{\Psi},\omega_1} &= \lim_{R\to\infty} \lim_{j\to\infty} \int_\Omega \phi h(\frarg,u_j) \cdot \overline{T_{(1-\eta_R)\Psi}[u_j]} \dd x \\
  &= \lim_{R\to\infty} \lim_{j\to\infty} \int_\Omega \sqrt{\phi} h(\frarg,u_j) \cdot \overline{T_{(1-\eta_R)\Psi}[\sqrt{\phi} u_j]} \dd x \\
  &= \lim_{R\to\infty} \lim_{j\to\infty} \int_\Omega \sqrt{\phi} h(\frarg,v_j) \cdot \overline{T_{(1-\eta_R)\Psi}[\sqrt{\phi} v_j]} \dd x \\
  &= \cdots = \ddprb{\phi f \otimes \overline{\Psi},\omega_2},
\end{align*}
which proves the claim.
\end{proof}

The following lemma will be used many times in the sequel:

\begin{lemma} \label{lem:eliminate_Rlim}
Let $(u_j) \subset \Lrm^p(\Omega;\C^N)$ generate $\omega \in \MCF^p(\Omega;\C^N)$ and also assume $u_j \toweak u$ in $\Lrm^p(\Omega;\C^N)$. Then, for all $f \in \Fbf^p(\Omega;\C^N)$ with $f(x,z,q) = h(x,z) \cdot q$, and all $\Psi \in \Mcal$, it holds that
\[
  \ddprb{f \otimes \overline{\Psi},\omega} = \lim_{j\to\infty} \int_\Omega h(\frarg,u_j) \cdot \overline{T_\Psi[u_j-u]} \dd x.
\]
\end{lemma}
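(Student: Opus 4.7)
The plan is to show that the single limit on the right equals the double limit defining $\ddprb{f\otimes\overline{\Psi},\omega}$ in~\eqref{eq:omega}, by relating $T_{(1-\eta_R)\Psi}[u_j]$ and $T_\Psi[u_j-u]$ through a commutative algebraic decomposition and then passing to the limit in $j$ and $R$ separately.

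First, after passing to a subsequence (which is harmless since every subsequence will be shown to give the same limit), I may assume that $h(\frarg,u_j) \toweak H$ in $\Lrm^{p/(p-1)}(\Omega;\C^N)$; this is possible since $\abs{h(\frarg,u_j)} \leq C(1+\abs{u_j})^{p-1}$ is bounded in $\Lrm^{p/(p-1)}$ by~\eqref{eq:h_est}. Writing $T_{(1-\eta_R)\Psi}[u_j] = T_\Psi[u_j] - T_{\eta_R\Psi}[u_j]$ and $T_\Psi[u_j] = T_\Psi[u_j-u] + T_\Psi[u]$, algebra gives
\begin{align*}
  &\int_\Omega h(\frarg,u_j) \cdot \overline{T_\Psi[u_j-u]} \dd x - \int_\Omega h(\frarg,u_j) \cdot \overline{T_{(1-\eta_R)\Psi}[u_j]} \dd x \\
  &\qquad = -\int_\Omega h(\frarg,u_j) \cdot \overline{T_\Psi[u]} \dd x + \int_\Omega h(\frarg,u_j) \cdot \overline{T_{\eta_R\Psi}[u_j]} \dd x.
\end{align*}

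Now take $j\to\infty$ with $R$ fixed. The first term on the right converges to $-\int H \cdot \overline{T_\Psi[u]}\dd x$ by the weak convergence of $h(\frarg,u_j)$ (the factor $T_\Psi[u]$ is a fixed $\Lrm^p$-function by Mihlin's theorem). For the second term, Lemma~\ref{lem:bandlim_compact} ensures that $T_{\eta_R\Psi}[u_j] \to T_{\eta_R\Psi}[u]$ strongly in $\Lrm^p_\loc(\R^d;\C^N)$, hence strongly on $\cl\Omega$ which is bounded; pairing weak convergence of $h(\frarg,u_j)$ with strong convergence yields the limit $\int H \cdot \overline{T_{\eta_R\Psi}[u]}\dd x$. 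Combining, the displayed difference tends (as $j\to\infty$) to
\[
  -\int_\Omega H \cdot \overline{T_{(1-\eta_R)\Psi}[u]} \dd x.
\]
Since the second term of the difference itself tends to $\ddprb{f\otimes\overline{\Psi},\omega}$ as we first let $j\to\infty$ and then $R\to\infty$, this calculation shows that $\lim_j \int_\Omega h(\frarg,u_j) \cdot \overline{T_\Psi[u_j-u]}\dd x$ exists for every fixed $R$ (and equals the sum of the two quantities), so the limit in $j$ is independent of $R$.

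It remains to take $R\to\infty$ in the error term. Writing $T_{(1-\eta_R)\Psi}[u] = T_\Psi[u] - T_{\eta_R}[T_\Psi[u]]$ and applying Lemma~\ref{lem:TetaR_conv} to the fixed function $T_\Psi[u]\in\Lrm^p(\R^d;\C^N)$, we conclude $T_{(1-\eta_R)\Psi}[u]\to 0$ strongly in $\Lrm^p$ as $R\to\infty$. Since $H \in \Lrm^{p/(p-1)}$, H\"older's inequality then shows the error term vanishes. Hence
\[
  \lim_{j\to\infty} \int_\Omega h(\frarg,u_j) \cdot \overline{T_\Psi[u_j-u]} \dd x = \lim_{R\to\infty}\lim_{j\to\infty}\int_\Omega h(\frarg,u_j) \cdot \overline{T_{(1-\eta_R)\Psi}[u_j]} \dd x = \ddprb{f\otimes\overline{\Psi},\omega}.
\]
The identification is independent of the chosen subsequence, so the original sequence converges to the same limit. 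The only subtle point is confirming that the $j$-limit of the error exists for each fixed $R$ (handled via the weak-strong pairing above); the rest is a careful bookkeeping of the order of limits.
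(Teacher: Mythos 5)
Your proof is correct and takes essentially the same route as the paper's: you split off $T_\Psi[u_j-u]$, use Lemma~\ref{lem:bandlim_compact} to pass to the limit in $j$ against the weak $\Lrm^{p/(p-1)}$-limit $H$ of $h(\frarg,u_j)$, and then eliminate the remainder $\int_\Omega H \cdot \overline{T_{(1-\eta_R)\Psi}[u]} \dd x$ as $R\to\infty$ via Lemma~\ref{lem:TetaR_conv}. The additional bookkeeping on subsequences and the existence of the $j$-limit for fixed $R$ is sound but does not change the argument.
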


\begin{proof}
We have
\begin{align*}
  \ddprb{f \otimes \overline{\Psi},\omega} &= \lim_{R\to\infty} \lim_{j\to\infty} \int_\Omega h(\frarg,u_j) \cdot \overline{T_{(1-\eta_R)\Psi}[u_j]} \dd x \\
  &= \lim_{j\to\infty} \int_\Omega h(\frarg,u_j) \cdot \overline{T_\Psi[u_j-u]} \dd x\\
  &\qquad + \lim_{R\to\infty} \lim_{j\to\infty} \int_\Omega h(\frarg,u_j) \cdot \overline{T_{(1-\eta_R)\Psi}[u]} \dd x,
\end{align*}
because $T_{\eta_R\Psi}[u_j] \to T_{\eta_R\Psi}[u]$ strongly in $\Lrm^p(\Omega;\C^N)$, see Lemma~\ref{lem:bandlim_compact}. If $H \in \Lrm^{p/(p-1)}(\Omega;\C^N)$ is the weak limit of $h(\frarg,u_j)$ (up to a subsequence),
\[
  \lim_{R\to\infty} \lim_{j\to\infty} \int_\Omega h(\frarg,u_j) \cdot \overline{T_{(1-\eta_R)\Psi}[u]} \dd x = \lim_{R\to\infty} \int_\Omega H \cdot \overline{T_{(1-\eta_R)\Psi}[u]} \dd x = 0,
\]
where we also used $T_{(1-\eta_R)\Psi}[u] \toweak 0$ for $R\to\infty$, see Lemma~\ref{lem:TetaR_conv}, to conclude.
\end{proof}

The final result in this section shows that $\omega = 0$ is equivalent to strong convergence of any generating sequence, which expresses that the MCF quantifies the difference between weak and strong convergence.

\begin{lemma} \label{lem:omega0_strong_conv}
Let $(u_j) \subset \Lrm^p(\Omega;\C^N)$ generate $\omega \in \MCF^p(\Omega;\C^N)$ and also $u_j \toweak u$ in $\Lrm^p(\Omega;\C^N)$. Then $u_j \to u$ strongly if and only if $\omega = 0$
\end{lemma}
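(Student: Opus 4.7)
The plan is to prove the two directions separately: the implication $u_j \to u$ strongly $\Rightarrow \omega = 0$ is an immediate consequence of the machinery already established, while the converse requires a careful choice of a single distinguished test pair.

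For the forward direction, fix any $f(x,z,q) = h(x,z)\cdot q \in \Fbf^p(\Omega;\C^N)$ and any $\Psi \in \Mcal$. Lemma~\ref{lem:eliminate_Rlim} rewrites the duality pairing as
\[
  \ddprb{f \otimes \overline{\Psi},\omega} = \lim_{j\to\infty} \int_\Omega h(\frarg,u_j) \cdot \overline{T_\Psi[u_j-u]} \dd x.
\]
The growth bound~\eqref{eq:h_est} ensures that $(h(\frarg,u_j))_j$ is bounded in $\Lrm^{p/(p-1)}$, and strong convergence $u_j \to u$ in $\Lrm^p$ combined with the Mihlin estimate~\eqref{eq:T_psi_est} gives $T_\Psi[u_j-u] \to 0$ in $\Lrm^p$. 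H\"older's inequality forces the integral to $0$, so $\omega = 0$ by the density assertion of Lemma~\ref{lem:countable_test}.

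For the converse, the guiding idea is to pick a single test pair $(f,\Psi)$ whose duality pairing with $\omega$, forced to vanish by hypothesis, essentially computes the $p$-th power of $\norm{u_j-u}_p$. I would take $\Psi := I_{N\times N}$ and $h(x,z) := \abs{z}^{p-2}z$, giving $f(x,z,q) := \abs{z}^{p-2}z \cdot q$. A direct calculation shows $(S^{p-1}h)(w) = \abs{w}^{p-2}w$, which is continuous on $\cl{\C\B^N}$ (continuity at $w=0$ uses $\abs{w}^{p-1} \to 0$, valid since $p>1$), so $f \in \Fbf^p(\Omega;\C^N)$; and $I \in \Mcal$ is immediate. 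Applying Lemma~\ref{lem:eliminate_Rlim}, the vanishing of $\ddprb{f \otimes \overline{I},\omega}$ becomes
\[
  \lim_{j\to\infty}\int_\Omega \abs{u_j}^{p-2} u_j \cdot \overline{(u_j-u)}\dd x = 0.
\]
Since $\abs{u}^{p-2}u \in \Lrm^{p/(p-1)}$ is a fixed element and $u_j - u \toweak 0$ in $\Lrm^p$, the analogous integral with $u_j$ replaced by $u$ in the first factor also tends to $0$; subtracting and taking real parts yields
\[
  \lim_{j\to\infty}\mathrm{Re}\int_\Omega \bigl(\abs{u_j}^{p-2}u_j - \abs{u}^{p-2}u\bigr) \cdot \overline{(u_j-u)} \dd x = 0.
\]

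To close, I would appeal to the classical monotonicity of the duality map $z \mapsto \abs{z}^{p-2}z$. For $p \geq 2$, the pointwise estimate $\mathrm{Re}\bigl((\abs{a}^{p-2}a - \abs{b}^{p-2}b)\cdot\overline{(a-b)}\bigr) \geq c_p\abs{a-b}^p$ immediately gives $\norm{u_j-u}_p^p \to 0$. For $1 < p < 2$ the corresponding pointwise lower bound degenerates to $c_p\abs{a-b}^2(\abs{a}+\abs{b})^{p-2}$, and I would recover the $\Lrm^p$-norm by writing $\abs{u_j-u}^p = \bigl[\abs{u_j-u}^2(\abs{u_j}+\abs{u})^{p-2}\bigr]^{p/2}(\abs{u_j}+\abs{u})^{(2-p)p/2}$ and applying H\"older with exponents $2/p$ and $2/(2-p)$: the first factor is controlled by the previous display and the second by the uniform $\Lrm^p$-bound on $(u_j)$. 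The main obstacle will be bookkeeping: the paper's bilinear convention $a\cdot b = \sum_k a^k b^k$ forces explicit conjugations throughout, and one must pass to real parts at the right moment so that the monotonicity inequality (naturally stated for the Hermitian form $a\cdot\bar{b}$) applies directly.
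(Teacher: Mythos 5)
Your proof is correct, and while the forward direction coincides with the paper's (Lemma~\ref{lem:eliminate_Rlim} plus H\"{o}lder and the Mihlin bound), your converse takes a genuinely different route. The paper first invokes Proposition~\ref{prop:equiint} to get $p$-equiintegrability of $(u_j)$ from $\omega=0$ (a forward reference to Section~\ref{ssc:conc}), then tests with the truncated, $x$-dependent integrand $f_R(x,z,q)=\abs{z-u^R(x)}^{p-2}(z-u^R(x))\cdot q$, which is only Carath\'{e}odory in $x$ and therefore also needs the extended representation of Proposition~\ref{prop:ext_repr}; the equiintegrability is what controls the $\SmallO(1/R)$ error from the truncation. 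You instead test with the single $x$-independent integrand $\abs{z}^{p-2}z\cdot q$ (correctly checked to lie in $\Fbf^p(\Omega;\C^N)$, since $S^{p-1}h(w)=\abs{w}^{p-2}w$), so only Lemma~\ref{lem:eliminate_Rlim} is needed, and you close with the classical strict-monotonicity inequalities for the duality map $z\mapsto\abs{z}^{p-2}z$ (identifying $\C^N\cong\R^{2N}$, so that $\mathrm{Re}(a\cdot\overline{b})$ is the real inner product), splitting the cases $p\geq 2$ and $1<p<2$ with the usual H\"{o}lder trick in the latter; the only bookkeeping point you leave implicit is that the factor $\abs{u_j-u}^2(\abs{u_j}+\abs{u})^{p-2}$ is read as zero where $u_j=u=0$. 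The trade-off: your argument is more self-contained within the MCF machinery developed up to this point and avoids both the truncation and the equiintegrability step, at the price of importing the quantitative $p$-monotonicity inequalities; the paper's route uses only soft continuity of the integrand and, along the way, exhibits the structurally interesting fact that $\omega=0$ forces $p$-equiintegrability of every generating sequence.
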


\begin{proof}
Let $f \in \Fbf^p(\Omega;\C^N)$ and $\Psi \in \Mcal$. If $u_j \to u$ strongly in $\Lrm^p(\Omega;\C^N)$, then by the preceding lemma,
\[
  \ddprb{f \otimes \overline{\Psi},\omega} = \lim_{j\to\infty} \int_\Omega h(\frarg,u_j) \cdot \overline{T_{\Psi}[u_j-u]} \dd x = 0.
\]

Now assume $\omega = 0$. According to Proposition~\ref{prop:equiint} in Section~\ref{ssc:conc} below we have that $(u_j)$ is $p$-equiintegrable. Define $u^{R} := \ONE_{\{\abs{u}<R\}} \, u$ and set
\[
  f_R(x,z,q) := \abs{z-u^{R}(x)}^{p-2}(z-u^{R}(x)) \cdot q.
\]
Further, by Lemma~\ref{lem:eliminate_Rlim} and the extended representation result stated in Proposition~\ref{prop:ext_repr} in the next section,
\begin{align*}
  0 = \ddprb{f_R \otimes I,\omega} &= \limsup_{j\to\infty} \int_\Omega \abs{u_j-u^{R}}^{p-2} \,  (u_j-u^{R}) \cdot \overline{(u_j-u)}\dd x \\
  &= \limsup_{j\to\infty} \norm{u_j-u^{R}}_{\Lrm^p(\{\abs{u}<R\})}^p + \SmallO(1/R),
\end{align*}
where $\SmallO(1/R)$ is a term that vanishes as $R \toup \infty$; this comes from the $p$-equiintegrability of $(u_j)$ whereby the integrand in the first line is also equiintegrable and $\abs{\{\abs{u} \geq R\}} \todown 0$ as $R \toup \infty$. Thus, using the equiintegrability again, $\lim_{j\to\infty} \norm{u_j-u}_p^p = 0$.
\end{proof}

\subsection{Extended representation, bilinear forms and compound symbols} \label{ssc:ext_repr_distrib}

In this section we show how MCFs allow us to represent limits for a wider class of integrands, consider MCFs as bilinear forms, and we also briefly touch on the complicated topic of \enquote{compound} symbols, i.e.\ symbols that cannot be written as a tensor product $f \otimes \overline{\Psi}$.

We first define the set of \term{representation integrands}
\begin{align*}
  \Rbf(\Omega;\C^N) := \setb{ &f \colon \cl{\Omega} \times \C^N \times \C^N \to \C }{ \text{$f(x,z,q) = h(x,z) \cdot q$ and}\\
  &\qquad\text{$h$ Carath\'{e}odory with at most $(p-1)$-growth at infinity} \\
  &\qquad\text{and $h^\infty \in \Crm(\cl{\Omega} \times \C^N;\C^N)$ exists in the sense of~\eqref{eq:h_infty} } }.
\end{align*}
In this context recall that a function $h \colon \cl{\Omega} \times \C^N \to \C$ is Carath\'{e}odory if $h(\frarg,z)$ is $\Lcal^d$-measurable for all fixed $z \in \C^N$ and $h(z,\frarg)$ is continuous for all fixed $x \in \cl{\Omega}$. The main result for this class of integrands is that even if only $f \in \Rbf(\Omega;\C^N)$, the representation of limits via the MCF still holds:

\begin{proposition} \label{prop:ext_repr}
Let $(u_j) \subset \Lrm^p(\Omega;\C^N)$ generate $\omega \in \MCF^p(\Omega;\C^N)$. Then, also for $f \in \Rbf(\Omega;\C^N)$ and $\Psi \in \Mcal$ it holds that
\begin{align*}
  &\lim_{R\to\infty} \lim_{j\to\infty} \int_\Omega h(\frarg,u_j) \cdot \overline{T_{(1-\eta_R)\Psi}[u_j]} \dd x \\
  &\qquad = \ddprb{f \otimes \overline{\Psi},\omega} \\
  &\qquad = \int_\Omega \dprb{h(x,\frarg) \otimes \overline{\Psi},\omega_x} \dd x + \int_{\cl{\Omega}} \dprb{h^\infty(x,\frarg) \otimes \overline{\Psi},\omega_x^\infty} \dd \lambda_\omega(x).
\end{align*}
\end{proposition}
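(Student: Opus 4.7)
The strategy is to approximate an arbitrary $f \in \Rbf(\Omega;\C^N)$ by a sequence $f_n \in \Fbf^p(\Omega;\C^N)$ to which Theorem~\ref{thm:omega} directly applies, and then pass to the limit $n \to \infty$ on both sides of the identity. The only gap between $\Rbf$ and $\Fbf^p$ is that $h$ is merely Carath\'{e}odory in $x$ rather than jointly continuous (its recession $h^\infty$ is already continuous by hypothesis), so a Scorza--Dragoni-type approximation carried out on the compactified target space is natural.

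For the construction of the approximants, consider the extended function $Eh \colon \cl{\Omega} \times \sigma\C^N \to \C^N$ from~\eqref{eq:Eh}, augmented at the sphere at infinity by $Eh(x,\infty e) := h^\infty(x,e)$. Under the assumptions defining $\Rbf$, $Eh$ is bounded on the compact space $\cl{\Omega} \times \sigma\C^N$, continuous in $z \in \sigma\C^N$ for a.e.\ $x$ (the matching at infinity is continuous because of the $(p-1)$-recession assumption~\eqref{eq:h_infty}), and Borel measurable in $x$ for each fixed $z$. Apply Scorza--Dragoni on $\cl{\Omega} \times \sigma\C^N$ with respect to the finite Radon measure $\mu_* := \Lcal^d\restrict\Omega + \lambda_\omega$ to obtain, for each $n \in \N$, a compact $K_n \subset \cl{\Omega}$ with $\mu_*(\cl{\Omega}\setminus K_n) < 1/n$ such that $Eh|_{K_n \times \sigma\C^N}$ is jointly continuous. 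Extend via Tietze to a continuous $F_n \colon \cl{\Omega} \times \sigma\C^N \to \C^N$ with $\norm{F_n}_\infty \leq \norm{Eh}_\infty$, and set $h_n(x,z) := (1+\abs{z})^{p-1} F_n(x,z)$. Then $f_n(x,z,q) := h_n(x,z) \cdot q \in \Fbf^p(\Omega;\C^N)$ with $\norm{f_n}_{\Fbf^p}$ uniformly bounded in $n$, $h_n = h$ on $K_n \times \C^N$, and $h_n^\infty = h^\infty$ on $K_n \times \partial\C\B^N$.

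Theorem~\ref{thm:omega} applied to each $f_n$ yields
\[
  \lim_{R\to\infty} \lim_{j\to\infty} \int_\Omega h_n(\frarg,u_j) \cdot \overline{T_{(1-\eta_R)\Psi}[u_j]} \dd x = \ddprb{f_n \otimes \overline{\Psi},\omega}.
\]
The right-hand side is easy to pass to the limit: since $h - h_n$ vanishes on $K_n \times \C^N$ and is dominated by $2\norm{f}_{\Fbf^p}(1+\abs{z})^{p-1}$ (and analogously at infinity), the difference $\ddprb{(f-f_n)\otimes\overline{\Psi},\omega}$ is controlled by integrals over $\cl{\Omega}\setminus K_n$ of the densities of $\omega_x$ (against $\Lcal^d$) and $\omega_x^\infty$ (against $\lambda_\omega$) obtained in the decomposition inside the proof of Theorem~\ref{thm:omega}; these densities are $L^1$-integrable by construction, so absolute continuity of the Lebesgue integral combined with $\mu_*(\cl{\Omega}\setminus K_n)\to 0$ gives $\ddprb{f_n\otimes\overline{\Psi},\omega}\to\ddprb{f\otimes\overline{\Psi},\omega}$.

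The main obstacle is controlling the left-hand side uniformly in $j$ and $R$. Rewriting through the joint measures $\mu^\Psi_{j,R}$ from the proof of Theorem~\ref{thm:omega},
\[
  \int_\Omega [h(\frarg,u_j) - h_n(\frarg,u_j)] \cdot \overline{T_{(1-\eta_R)\Psi}[u_j]} \dd x = \int (S^{p-1}[h - h_n])(x,w) \cdot r \dd\mu^\Psi_{j,R}(x,w,r),
\]
and because this integrand vanishes on $K_n \times \cl{\C\B^N} \times \cl{\C\B^N}$ and is bounded by $2\norm{f}_{\Fbf^p}$ in modulus, the expression is dominated by $2\norm{f}_{\Fbf^p}\cdot\mu^\Psi_{j,R}((\cl{\Omega}\setminus K_n)\times\cl{\C\B^N}\times\cl{\C\B^N})$. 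The crux is that after passing to the weak* limits, the $x$-marginal of $\mu^\Psi$ on $\cl{\Omega}\setminus K_n$ is controlled by the estimate~\eqref{eq:kappa_est} from the proof of Theorem~\ref{thm:omega} by $C\norm{\Psi}_\Mcal\,\lambda_\omega(\cl{\Omega}\setminus K_n)^{(p-1)/p}$, which tends to zero. By choosing $K_n$ slightly enlarged (using regularity of $\mu^\Psi$) so that its boundary carries no $\mu^\Psi$-mass, upper semicontinuity of the weak* limit on closed sets transfers this smallness back to $\mu^\Psi_{j,R}$ uniformly for large $j, R$. For the remaining finite-$j, R$ terms, the initial estimate~\eqref{eq:omega_n} combined with the choice of $K_n$ suffices. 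Combining these steps concludes the proof.
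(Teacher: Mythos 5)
There is a genuine gap at the crucial step, namely the uniform (in $j,R$) control of the error on the Scorza--Dragoni bad set. The set $\cl{\Omega}\setminus K_n$ is relatively \emph{open}, so weak* convergence $\mu^\Psi_{j,R}\toweakstar\mu^\Psi_R\toweakstar\mu^\Psi$ only gives upper semicontinuity on its \emph{closure}, whose $x$-marginal mass includes $\kappa(\partial K_n)$; and your proposed remedy --- ``choosing $K_n$ slightly enlarged so that its boundary carries no $\mu^\Psi$-mass'' --- is not available, since enlarging $K_n$ destroys exactly the two properties you use ($Eh$ jointly continuous on $K_n\times\sigma\C^N$, hence $h_n=h$ there), while no admissible choice of $K_n$ can remove boundary mass sitting at a concentration point. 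Concretely, take $p=2$, $d=1$, $u_j(x)=j^{1/2}w(jx)$, so that $\lambda_\omega$ has an atom at $0$, and $h(x,z)=a(x)\rho(z)z$ with $\rho\in\Crm_c(\C)$ and $a=1+\ONE_E$, where $E$ and its complement have positive measure in every interval; then $h\in\Rbf$ (indeed $h^\infty\equiv 0$), but $h(\frarg,z)$ is not a.e.\ equal to a continuous function on any neighborhood of $0$. Hence every Lusin compact $K_n$ with small $(\Lcal^d+\lambda_\omega)$-complement contains $0$ but no neighborhood of $0$, so $0\in\partial K_n$ and $\Omega\setminus K_n$ has positive upper density at $0$; consequently $\limsup_j\int_{\Omega\setminus K_n}(1+\abs{u_j})^{p-1}(1+\abs{v_{j,R}})\dd x$ stays bounded away from zero no matter how $K_n$ is chosen, and your bound on the bad-set error gives nothing. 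The problem is compounded by the Tietze extension being performed only relative to $K_n\times\sigma\C^N$: off $K_n$ the recession of $h_n$ need not agree with $h^\infty$, so the error integrand genuinely carries $(p-1)$-growth on the bad set --- precisely where it sees the concentration. (Two smaller points: Lusin with respect to $\Lcal^d+\lambda_\omega$ requires $\lambda_\omega$-measurability of $x\mapsto Eh(x,\frarg)$, which the Carath\'{e}odory hypothesis does not provide --- fixable by redefining $h:=h^\infty$ on an $\Lcal^d$-null set carrying the $\Lcal^d$-singular part of $\lambda_\omega$; and the estimate~\eqref{eq:kappa_est} is phrased in terms of $\lambda$, not $\lambda_\omega$.)

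The paper's own proof is a pointer to the corresponding generalized Young measure argument, and the step you are missing is its backbone: one must first use the continuity of $h^\infty$ and the strong attainment of the recession built into~\eqref{eq:h_infty} to peel off the behavior at infinity, writing $h = h\,\rho(z/M) + h^\infty(z)\bigl(1-\rho(z/M)\bigr) + \bigl[h-h^\infty\bigr]\bigl(1-\rho(z/M)\bigr)$ with a cut-off $\rho$. The middle term belongs (essentially) to $\Fbf^p(\Omega;\C^N)$, so Theorem~\ref{thm:omega} applies directly; the last term is bounded by $\eps(1+\abs{z})^{p-1}$ uniformly in $x$ once $M$ is large, by a compactness argument on $\cl{\Omega}\times\infty\partial\C\B^N$, so it contributes $\BigO(\eps)$ to both sides via H\"{o}lder and~\eqref{eq:omega_est}-type bounds; and only the first term, which is \emph{bounded in $z$}, is treated by Scorza--Dragoni/Tietze. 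For a bounded integrand the bad-set error is at most $C_M\,\abs{\Omega\setminus K_n}^{(p-1)/p}\,\sup_{j,R}\norm{T_{(1-\eta_R)\Psi}[u_j]}_p$, so smallness of the \emph{Lebesgue} measure of the bad set suffices and concentrations never enter. With that reorganization the rest of your scheme (approximation, passage to the limit on the right-hand side) goes through; without it, the key estimate fails.
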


\begin{proof}
The fact that the expression on the right hand side is well-defined and equals the left hand side can be proved by a strategy similar to the one employed in the proof of the corresponding result for generalized Young measures, see Proposition~2 in~\cite{KriRin10CGGY}.
\end{proof}

Next, we investigate how an MCF can be considered a bilinear form: We start with the Hilbert space case $p=2$, where the strongest assertions can be made (as is to be expected): Since all $\Lrm^\infty$-functions give rise to a bounded $\Lrm^2$-Fourier multiplier, we get the following improved basic estimate (cf.~\eqref{eq:omega_est}):
\[
  \absb{\ddprb{f \otimes \overline{\Psi},\omega}} \leq C \bigl( \supmod_j \norm{1 + \abs{u_j}}_2^2 \bigr) \cdot \norm{f}_{\Fbf^2} \cdot \norm{\Psi}_\infty
\]
and we can allow $\Psi \in \Crm(\Sbb^{d-1};\C^{N \times N})$. This shows that $\omega$ can be seen as a bilinear form (see~\eqref{eq:Eh} for the definition of $Eh$)
\begin{align*}
  &\omega \colon \Crm(\cl{\Omega} \times \sigma\C^N;\C^N) \times \Crm(\Sbb^{d-1};\C^{N \times N}) \to \C, \\
  &\omega(Eh,\overline{\Psi}) := \ddprb{ f \otimes \overline{\Psi}, \omega}.
\end{align*}
Note that we cannot allow $\Psi \in \Lrm^\infty(\Sbb^{d-1};\C^{N \times N})$ because there is no countable dense subset, which we need to construct a generic limit for all $\Psi$.

By a similar argument for $p>2$ and using the embedding $\Lrm^p(\Omega;\C^N) \embed \Lrm^2(\Omega;\C^N)$ for bounded $\Omega$, we can furthermore consider $\omega$ as a bilinear form 
\[
  \omega \colon X^p \times \Crm(\Sbb^{d-1};\C^{N \times N}) \to \C,
\]
where
\[
  X^p := \setB{ Eh \in \Crm(\cl{\Omega} \times \sigma\C^N;\C^N) }{ \text{$\abs{Eh(x,z)} \leq C(1 + \abs{z})^{(2-p)/2}$, $C > 0$} }.
\]
Note that here we need $(p/2)$-growth of $h$, translating into the above condition on $Eh$, in order for the integral in the definition of the MCF $\omega$ to be bounded.

For $p<2$, however, more regularity in $\Psi$ beyond mere continuity seems to be necessary.

We now turn to the question of \enquote{compound} symbols. Let $f \in \Fbf^p(\Omega;\C^N)$ with $f(x,z,q) = h(x,z) \cdot q$. For a sequence $(u_j) \subset (\Lrm^1 \cap \Lrm^p)(\Omega;\C^N)$ with the property that additionally $\hat{u}_j \in (\Lrm^1 \cap \Lrm^{p'})(\Omega;\C^N)$ for all $j \in \N$ ($1/p + 1/p' = 1$), we can write
\begin{align*}
  &\ddprb{f \otimes \overline{\Psi},\omega} = \lim_{R\to\infty} \lim_{j\to\infty} \int_\Omega h(\frarg,u_j) \cdot \overline{T_{(1-\eta_R)\Psi}[u_j]} \dd x \\
  &= \lim_{R\to\infty} \lim_{j \to \infty} \int_{\R^d} \int_\Omega \int_\Omega (1-\eta_R(\xi))\Psi^* \biggl(\frac{\xi}{\abs{\xi}}\biggr) h(x,u_j(x)) \cdot \overline{u_j(y)} \, \ee^{2\pi\ii (y-x) \cdot \xi} \dd y \dd x \dd \xi,
\end{align*}
see~\eqref{eq:full_integral}. One can therefore consider
\[
  \Psi^* \biggl(\frac{\xi}{\abs{\xi}}\biggr) h(x,z) \cdot q
\]
to be the \enquote{symbol} of the above double Fourier integral (similarly to the theory of pseudo-differential operators).

We can thus extend the class of admissible \enquote{symbols} $f \otimes \overline{\Psi}$ to all $F = F(x,z,q,\xi)$ of the form
\[
  F(x,z,q,\xi) = \Psi^* \biggl(x, \frac{\xi}{\abs{\xi}}\biggr) h(x,z) \cdot q
\]
for $h \colon \cl{\Omega} \times \C^N \to \C^N$ a Carath\'{e}odory function and $\Psi \in \Crm(\cl{\Omega};\Mcal)$ such that
\[
  \Psi^* \biggl(x, \frac{\xi}{\abs{\xi}}\biggr) h^\infty(x,z) \cdot q
\]
is continuous. We call these symbols $F$ \term{compound symbols} and define
\[
  \ddprb{F,\omega} := \int_\Omega \dprb{h(x,\frarg) \otimes \overline{\Psi}(x,\frarg),\omega_x} \dd x + \int_{\cl{\Omega}} \dprb{h^\infty(x,\frarg) \otimes \overline{\Psi}(x,\frarg),\omega_x^\infty} \dd \lambda_\omega(x).
\]

The question whether one can define $\omega$ for even more general compound symbols depending on $x$, $z$, and $\xi/\abs{\xi}$, and how much smoothness is minimally necessary, is rather intricate. In this context it is well known that for symbols of pseudo-differential operators some smoothness is indispensable, even for $(\Lrm^2\to\Lrm^2)$-boundedness; a classical example for this can be found in Chapter~VII of~\cite{Stei93HA}. This necessary regularity also persists for the subclass of symbols that are positively $0$-homogeneous in $\xi$, see Theorem~3 and Proposition~1 of~\cite{Stef10PORS}.

A first, rather abstract, approach to such compound symbols in $x,z,\xi/\abs{\xi}$ is through the theory of tensor products of Banach spaces, see for example~\cite{Ryan02ITPB} or Chapter~VIII of~\cite{DieUhl77VM}. As it turns out there are many ways to define the tensor product of two Banach spaces $X,Y$. For our purposes the most relevant is the \term{projective tensor product} $X \otimes_\pi Y$, which is the completion of $X \otimes Y$ with respect to the \term{projective crossnorm}
\[
  \norm{x}_\pi := \inf \, \setBB{ \sum_{i=1}^n \norm{x_i} \norm{y_i} }{ x = \textstyle\sum_{i=1}^n x_i \otimes y_i }.
\]
With this largest reasonable crossnorm, $X \otimes_\pi Y$ is the smallest tensor product completion of $X \otimes Y$ that yields a Banach space structure. The projective tensor product $X \otimes_\pi Y$ has the special property that its dual $(X \otimes_\pi Y)^*$ is simply the space of bounded bilinear forms on $X \times Y$ and therefore is ideally suited to our purposes. 

In our concrete situation, we have $X = \Crm(\C^N;\C^N)$, $Y = \overline{\Mcal}$ (we use $\overline{\Psi}$ instead of $\Psi$ because of the anti-linearity), and
\begin{align*}
  \omega_x &\in (\Crm(\C^N;\C^N) \otimes_\pi \overline{\Mcal})^*
    \qquad\text{for $\Lcal^d$-almost every $x \in \cl{\Omega}$,} \\
  \omega_x^\infty &\in (\Crm(\partial \C\B^N;\C^N) \otimes_\pi \overline{\Mcal})^*
    \qquad\text{for $\lambda_\omega$-almost every $x \in \cl{\Omega}$.}
\end{align*}
If it is at all possible to define the pairing between $\omega \in \MCF^p(\Omega;\C^N)$ and a compound symbol
\[
  F(x,z,q,\xi) = K(x,z,\xi) \cdot q
  \quad\in\quad  \Crm(\cl{\Omega} \times \C^N \times \C^N \times \Sbb^{d-1};\C),
\]
for which $S^{p-1} K \in \Crm(\cl{\Omega \times \C\B^N} \times \C^N \times \Sbb^{d-1};\C)$, then, for reasons of consistency (cf.~\eqref{eq:omega_ddpr}), this pairing must be
\[
  \ddprb{F,\omega} = \int_\Omega \omega_x \bigl( K(x,\frarg,\frarg) \bigr) \dd x
  + \int_{\cl{\Omega}} \omega_x \bigl( K^\infty(x,\frarg,\frarg) \bigr) \dd \lambda_\omega(x),
\]
where $K^\infty$ is again the $(p-1)$-recession function, defined just like in~\eqref{eq:h_infty}. Then, the above discussion shows that $F$ is an admissible symbol if
\[
  S^{p-1}K(x,\frarg,\frarg) \in \Crm(\cl{\C\B^N};\C^N) \otimes_\pi \overline{\Mcal}
  \qquad\text{for $(\Lcal^d + \lambda_\omega)$-a.e.\ $x \in \cl{\Omega}$.}
\]

Finally, we show how an MCF $\omega \in \MCF^p(\Omega;\C^N)$ can be considered as a distribution, and this will give a second way to treat compound symbols in $x,z,\xi/\abs{\xi}$. Recall the definition of the sphere compactification $\sigma\C^N$ of $\C^N$ in Section~\ref{ssc:sphere_compact} and that of $Eh$ in~\eqref{eq:Eh}, and for the moment consider $\omega$ as a bilinear form taking as its arguments $Eh \in \Crm(\cl{\Omega} \times \sigma\C^N;\C^N)$ and $\overline{\Psi} \in \overline{\Mcal}$. Of course, we may restrict to the subspace of such $Eh$ with infinitely many derivatives and then we can consider $\omega$ as a bilinear form
\begin{align*}
  &\omega \colon \Crm^\infty(\cl{\Omega} \times \sigma\C^N;\C^N) \times \Crm^\infty(\Sbb^{d-1};\C^{N \times N}) \to \C, \\
  &\omega(Eh,\overline{\Psi}) := \ddprb{ f \otimes \overline{\Psi}, \omega}.
\end{align*}
Here, $Eh \in \Crm^\infty(\cl{\Omega} \times \sigma\C^N;\C^N)$ means that $S^{p-1} h \in \Crm^\infty(\cl{\Omega \times \C\B^N};\C^N)$, with one-sided derivatives on $\partial \C\B^N$. In view of the Schwartz kernel theorem, see for instance Chapter~V in~\cite{Horm90ALPD1} or Section~4.6 in~\cite{Tayl11PDE1}, we infer that $\omega$ extends to a (complex-valued) distribution
\[
  \tilde{\omega} \in \Crm^\infty(\cl{\Omega} \times \sigma\C^N \times \Sbb^{d-1};\C^N \times \C^{N \times N})^*.
\]

Using the distribution $\tilde{\omega}$, we can \enquote{extend} the representation to the symbol class $\Crm^\infty(\cl{\Omega} \times \sigma\C^N \times \Sbb^{d-1};\C^N \times \C^{N \times N})$. This approach, however, has a significant drawback, namely that now some $x$-regularity is genuinely needed (as can be seen from the proofs of the Schwartz kernel theorem), so the interpretation of the duality pairing between a symbol and an MCF is less clear.

\subsection{Relationship with Young measures and H-measures} \label{ssc:MCF_YM_HM}

We noted in the introduction that the microlocal compactness form generated by a sequence contains (essentially) all the information of the Young measure and the H-measure (or microlocal defect measure). In this section, we make these statements precise and give more or less explicit ways of extracting the Young measure and H-measure. In the following, $\Omega$ is always a \emph{bounded} Lipschitz domain.

\term{Tartar's H-measure}~\cite{Tart90HMNA} (also see~\cite{Tart09GTH} for variants and historical information) or \term{G\'{e}rard's microlocal defect measure}~\cite{Gera91MDM} generated by a sequence $(u_j) \subset \Lrm^2(\Omega;\C^N)$ with $u_j \toweak 0$ is the matrix-valued measure $\mu = (\mu^k_l) \in \Mbf(\cl{\Omega} \times \Sbb^{d-1};\C^{N \times N})$, where like for ordinary matrices $k$ and $l$ denote the row and column index, respectively, defined via
\[
  \int \phi \otimes \overline{\psi} \dd \mu^k_l := \lim_{j\to\infty} \int_\Omega \Fcal[\phi_1 u_j^k](\xi) \cdot \overline{\Fcal[\phi_2 u_j^l](\xi) \cdot \psi \biggl( \frac{\xi}{\abs{\xi}} \biggr)} \dd \xi
\]
for all $\phi = \phi_1 \overline{\phi}_2$ with $\phi_1, \phi_2 \in \Crm(\Omega)$, all $\psi \in \Crm(\Sbb^{d-1};\C)$, and all $k,l = 1,\ldots,N$.

The fact that the H-measure is completely contained in the microlocal compactness form is essentially a triviality and follows directly from the definition by restricting to test functions of the form $f(x,z,q) = \phi_1(x)\overline{\phi_2}(x) z \cdot q$, taking into account Remark~\ref{rem:omega_alt}, in particular~\eqref{eq:MCF_L2}, and Lemma~\ref{lem:eliminate_Rlim}. Thus, for $\Psi = (\Psi^k_l) \in \Mcal$ (or only $\Psi \in \Crm(\Sbb^{d-1};\C^N)$, see the previous section) we get
\begin{align*}
  \int \phi \otimes \overline{\Psi} \dd \mu &= \sum_{k,l} \int \phi \, \overline{\Psi^k_l} \dd \mu^k_l \\
  &= \lim_{j\to\infty} \, \sum_{k,l} \int_\Omega \Fcal[\phi_1 u_j^k](\xi) \cdot \overline{\Fcal[\phi_2 u_j^l](\xi) \cdot \Psi^k_l \biggl( \frac{\xi}{\abs{\xi}} \biggr)} \dd \xi \\
  &= \lim_{j\to\infty} \, \int_\Omega \Fcal[\phi_1 u_j](\xi) \cdot \overline{\Psi \biggl( \frac{\xi}{\abs{\xi}} \biggr) \Fcal[\phi_2 u_j](\xi)} \dd \xi \\
  &= \ddprb{\phi \otimes \overline{\Psi}, \omega},
\end{align*}
where $\omega \in \MCF^2(\Omega;\C^{N \times N})$ is the MCF generated by $(u_j)$.

In the same vein, we can also write for $f(x,z,q) = h(x,z) \cdot q \in \Fbf^2(\Omega;\C)$ and $\psi \in \Crm(\Sbb^{d-1})$ (for notational simplicity we only consider the case $N=1$)
\[
  \ddprb{f \otimes \overline{\psi}, \omega} = \int \overline{\psi} \dd \gamma^1_2,
\]
where $\gamma = (\gamma^i_j)$ ($i,j = 1,2$) now is the H-measure associated with (a subsequence of) the sequence $(h(\frarg,u_j), u_j)_j$.

For the correspondence of the MCF with the (generalized) Young measure, let us first recall some basic facts and fix notation, see~\cite{DiPMaj87OCWS,AliBou97NUIG,KruRou97MDM,Sych00CHGY,KriRin10CGGY,Rind12LSYM,Rind14LPCY} for details: The \term{(generalized) Young measure} $\nu = (\nu_x,\lambda_\nu,\nu_x^\infty) \in \Ybf^p(\Omega;\C^N)$ consists of the \term{oscillation measure} $(\nu_x)_{x\in\Omega} \subset \Mbf^1(\C^N)$, which is a parametrized (and weakly*-measurable) family of probability measures on $\C^N$, the \term{concentration measure} $\lambda_\nu \in \Mbf(\cl{\Omega})$, and the \term{concentration-angle measure} $(\nu_x)_{x\in\Omega} \subset \Mbf^1(\partial \C\B^N)$, which is a $\lambda_\nu$-weakly* measurable family of probability measures on the unit sphere $\partial \C\B^N$ of $\C^N$. If the Young measure $\nu$ is generated by a sequence $(u_j) \subset \Lrm^p(\Omega;\C^N)$, then we have the representation of limits (cf.\ Theorem~7 in~\cite{KriRin10CGGY}, and~\cite{KruRou97MDM} for the case $p > 1$)
\begin{align*}
  \lim_{j\to\infty} \int_\Omega F(\frarg,u_j) \dd x &= \ddprb{F,\nu} \\
  &= \int_\Omega \int F(x,\frarg) \dd \nu_x \dd x + \int_{\cl{\Omega}} \int F^\infty(x,\frarg) \dd \nu_x^\infty \dd \lambda_\nu(x)
\end{align*}
for all $F \in \Crm(\cl{\Omega} \times \C^N)$ with the property that (cf.~\eqref{eq:Sp})
\[
  S^p F \in \Crm(\cl{\Omega \times \C\B^N}),
\]
whereby $F^\infty$ is well-defined in the sense
\[
  F^\infty(x,z) := \lim_{\substack{\!\!\!\! x' \to x \\ \!\!\!\! z' \to z \\ \; t \to \infty}} \frac{F(x',tz')}{t^p},
  \qquad x \in \cl{\Omega}, \; z \in \C^N.
\]
The classical Young measure is just the oscillation part $(\nu_x)_x$ of the generalized Young measure; in this case the limit representation above only holds if $F(\frarg,u_j)$ is an equiintegrable sequence (in this case $\lambda_\nu = 0$). The fact that the (generalized) Young measure can be computed from the MCF is contained in the following proposition.

\begin{proposition} \label{prop:MCF_YM}
Let $(u_j) \subset \Lrm^p(\Omega;\C^N)$ generate the microlocal compactness form $\omega \in \MCF^p(\Omega;\C^N)$ and also the (generalized) Young measure $\nu \in \Ybf(\Omega;\C^N)$. Assume further that $u_j \toweak u$. Then, the knowledge of $\omega$ and $u$ completely determines $\nu$.
\end{proposition}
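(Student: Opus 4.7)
The plan is to specialize the MCF duality to the trivial multiplier $\Psi = I_{N\times N}$, apply Lemma~\ref{lem:eliminate_Rlim} to collapse the outer $R$-limit, and then recognize the resulting single $j$-limit as a generalized Young measure pairing. From the ensuing master identity, $\nu$ is read off by varying the MCF test integrand $h$.

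Concretely, for $f(x,z,q) = h(x,z)\cdot q \in \Fbf^p(\Omega;\C^N)$ and $\Psi = I_{N\times N} \in \Mcal$ one has $T_\Psi = \id$, so Lemma~\ref{lem:eliminate_Rlim} reduces the MCF pairing to
\[
  \ddprb{f \otimes \overline{I}, \omega} = \lim_{j\to\infty} \int_\Omega h(x, u_j(x)) \cdot \overline{u_j(x) - u(x)} \dd x.
\]
The integrand is $F(x,z) := h(x,z) \cdot \overline{z-u(x)}$, a Carath\'eodory function of $p$-growth in $z$; writing $\overline{tz' - u(x)} = t\overline{z'} - \overline{u(x)}$ and using the strong $(p-1)$-recession of $h$, one computes
\[
  F^\infty(x,z) = \lim_{\substack{z' \to z \\ t \to \infty}} \frac{h(x, tz') \cdot \overline{tz' - u(x)}}{t^p} = h^\infty(x,z) \cdot \overline{z},
\]
the $\overline{u(x)}/t$ contribution vanishing. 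Applying the generalized Young measure representation in its Carath\'eodory version (cf.\ Proposition~2 in~\cite{KriRin10CGGY}) to $F$ then yields the master identity
\begin{equation*}
  \ddprb{f \otimes \overline{I}, \omega} = \int_\Omega \int h(x,z) \cdot \overline{z-u(x)} \dd \nu_x(z) \dd x + \int_{\cl{\Omega}} \int h^\infty(x,z) \cdot \overline{z} \dd \nu_x^\infty(z) \dd \lambda_\nu(x),
\end{equation*}
in which only $\nu$ is unknown once $u$ and the left-hand side are prescribed.

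Finally, I would extract $\nu$ from this identity by varying $h$. Choosing $h(x,z) = \phi(x) g(z) e_k$ with $\phi \in \Crm_c(\Omega)$, $g \in \Crm_c(\C^N;\C)$, and $e_k$ the $k$-th standard basis vector kills $h^\infty$, and the arbitrariness of $\phi$ isolates, for $\Lcal^d$-a.e.\ $x \in \Omega$, the linear map $g \mapsto \int g(z) \overline{z^k - u^k(x)} \dd \nu_x$. By a standard ideal/Taylor argument (working with real and imaginary parts separately), every $F \in \Crm_c(\C^N;\C)$ vanishing at $z = u(x)$ can be written as $\sum_k G_k(z) \overline{(z^k - u^k(x))}$ with continuous $G_k$, so these functionals, augmented by the probability normalization $\nu_x(\C^N) = 1$ and the barycenter condition $\int z \dd \nu_x = u(x)$ (both consequences of $u_j \toweak u$), pin down $\nu_x$ uniquely. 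Subtracting the now-known oscillation integral leaves
\[
  \int_{\cl{\Omega}} \int h^\infty(x,z) \cdot \overline{z} \dd \nu_x^\infty(z) \dd \lambda_\nu(x)
\]
as a prescribed linear functional of $h^\infty$, and testing with $h(x,z) = \phi(x) \tilde g(z/\abs{z}) \abs{z}^{p-1}(1-\chi(z)) e_k$ (with $\phi \in \Crm(\cl{\Omega})$, $\tilde g \in \Crm(\partial \C\B^N;\C)$, and a smooth radial cutoff $\chi$ at the origin) produces on the sphere the pairings $\phi(x) \tilde g(z) \overline{z^k}$; since $\sum_k z^k \overline{z^k} = 1$ on $\partial \C\B^N$ implies $\psi(z) = \sum_k (\psi(z) z^k) \overline{z^k}$ for every $\psi \in \Crm(\partial \C\B^N)$, such products span $\Crm(\cl{\Omega} \times \partial \C\B^N)$ by Stone--Weierstrass in the $x$-variable, determining the product measure $\nu_x^\infty \dd \lambda_\nu$ on $\cl{\Omega} \times \partial \C\B^N$ and hence both $\lambda_\nu$ and $(\nu_x^\infty)_x$. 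The main obstacle is the oscillation density step: the factor $\overline{z - u(x)}$ vanishes precisely where $\nu_x$ may carry an atom at $u(x)$, so the mass of any such atom must be recovered indirectly from the probability normalization rather than directly from the pairings.
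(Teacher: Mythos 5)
Your route is genuinely different from the paper's and ultimately arrives at the right place. The paper fixes an arbitrary Young-measure integrand $F$ with $p$-growth, splits it as $F(x,z) = G_\eps(x,z) + h_\eps(x,z)\cdot\overline{(z-u(x))}$ with a cut-off $\rho_\eps$ near $z = u(x)$, and uses the cut-off to express $\ddprb{F,\nu}$ directly as $\int_\Omega F(\frarg,u)\dd x - \ddprb{g_\eps\otimes I,\omega} + \ddprb{f_\eps\otimes I,\omega} + \BigO(\eps)$; letting $\eps \to 0$ then determines $\ddprb{F,\nu}$ for all $F$, and with it $\nu$, with no separate measure-recovery step. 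You instead fix $\Psi = I$, derive a single master identity via Lemma~\ref{lem:eliminate_Rlim}, and then reconstruct $\nu$ as a measure from that identity. Your approach avoids the $\rho_\eps$-bookkeeping but trades it for a uniqueness argument that must confront the degeneracy of the kernel $\overline{(z-u(x))}$ at $z = u(x)$, and you correctly diagnose that degeneracy as the crux.

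There is, however, one step in the reconstruction that is not right as stated: the claim that every $F \in \Crm_c(\C^N;\C)$ vanishing at $z = u(x)$ admits a \emph{pointwise} factorization $F(z) = \sum_k G_k(z)\,\overline{(z^k - u^k(x))}$ with continuous $G_k$. Already for $N = 1$, $u(x)=0$, $F(z) = z$ has $G(z) = z/\overline{z}$, which has no limit at the origin; the anti-holomorphic coordinates $\overline{(z^k-u^k(x))}$ alone do not generate the vanishing ideal in the pointwise Hadamard sense (one would need the full $2N$ real coordinates of $z - u(x)$, and even then only for $\Crm^1$ integrands). This gap is easily repaired, though, and in a way that fits your strategy better than a factorization would: from the master identity with $h(x,z) = \phi(x)g(z)e_k$ you know the linear functionals $g \mapsto \int g\,\overline{(z^k-u^k(x))}\dd\nu_x$ for all $g \in \Crm_c$, hence the complex measures $\overline{(z^k-u^k(x))}\,\nu_x$. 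Summing $(z^k-u^k(x))\cdot\overline{(z^k-u^k(x))}\,\nu_x$ over $k$ gives the positive measure $\abs{z-u(x)}^2\nu_x$, which determines $\nu_x$ on $\C^N\setminus\{u(x)\}$ by division, and the atom at $u(x)$ is then fixed by the probability normalization $\nu_x(\C^N)=1$. Two minor points besides this: the normalization $\nu_x(\C^N)=1$ is built into Definition of $\Ybf^p$ rather than a consequence of $u_j \toweak u$, and the barycenter condition is not actually needed in your argument. Your handling of $\lambda_\nu$ and $(\nu_x^\infty)_x$, exploiting $\sum_k z^k\overline{z^k}=1$ on $\partial\C\B^N$ so that the kernel has no zeros on the sphere, is clean and correct.
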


\begin{remark}
The fact that besides $\omega$ we also need to know $u(x) = \int z \dd \nu_x(z)$ ($x \in \cl{\Omega}$) is owed to the definition of MCFs being tailored to quantify the \emph{difference} between weak and strong convergence (or compactness). For instance, any strongly converging sequence generates the zero MCF, but obviously in general not the same Young measure.
\end{remark}

\begin{proof}
As recalled before, the generalized Young measure $\nu$ allows us to compute the limits
\[
  \ddprb{F,\nu} = \lim_{j\to\infty} \int_\Omega F(\frarg,u_j) \dd x
\]
for all integrands $F$ as above. It is well known in Young measure theory that all these limits together also determine $\nu$ (for the classical Young measure we only need to test with $F \in \Crm_c(\Omega \times \C^N;\C)$, so that $(F(\frarg,u_j))$ is an equiintegrable sequence). We need to show that all these limits can be expressed through $\omega$ and $u$.

Let $\eps > 0$ and take a cut-off function $\rho_\eps \in \Crm_c^\infty(\C^N)$ such that
\[
  \text{$\rho_\eps \equiv 1$ on $B(0,\eps)$,}  \qquad \supp \rho_\eps \subset B(0,2\eps),
\]
Write
\begin{align*}
  F(x,z) &=\phantom{:} \rho_\eps(z-u(x)) F(x,z) \\
         &\qquad + \biggl[ \bigl(1-\rho_\eps(z-u(x))\bigr) F(x,z) \frac{z-u(x)}{\abs{z-u(x)}^2} \biggr] \cdot \overline{(z-u(x))} \\
  &=: G_\eps(x,z) + h_\eps(x,z) \cdot \overline{(z-u(x))}.
\end{align*}
Then, $f_\eps(x,z) := h_\eps(x,z) \cdot q \in \Rbf^p(\Omega;\C^N)$ and
\begin{equation} \label{eq:YM_repr_split}
  \ddprb{F,\nu} = \lim_{j\to\infty} \int_\Omega G_\eps(\frarg,u_j) \dd x + \lim_{j\to\infty} \int_\Omega h_\eps(\frarg,u_j) \cdot \overline{(u_j-u)} \dd x. 
\end{equation}
By Proposition~\ref{prop:ext_repr} in conjunction with Lemma~\ref{lem:eliminate_Rlim}, we can express the second limit through $\omega$ as follows:
\[
  \lim_{j\to\infty} \int_\Omega h_\eps(\frarg,u_j) \cdot \overline{(u_j-u)} \dd x = \ddprb{f_\eps \otimes I, \omega}.
\]
For the first limit in~\eqref{eq:YM_repr_split} we get, exploiting the continuity of $F$ in its second argument,
\begin{align*}
  \lim_{j\to\infty} \int_\Omega G_\eps(\frarg,u_j) \dd x &= \lim_{j\to\infty} \int_\Omega \rho_\eps(u_j-u) F(\frarg,u) \dd x + \BigO(\eps) \\
  &= \int_\Omega F(\frarg,u) \dd x - \lim_{j\to\infty} \int_\Omega \bigl(1-\rho_\eps(u_j-u)\bigr) F(\frarg,u) \dd x \\
  &\qquad + \BigO(\eps) \\
  &=: \int_\Omega F(\frarg,u) \dd x - \lim_{j\to\infty} J(j,\eps) + \BigO(\eps).
\end{align*}
For the second term we have, using Proposition~\ref{prop:ext_repr} again,
\begin{align*}
  \lim_{j\to\infty} J(j,\eps) &=\phantom{:} \lim_{j\to\infty} \int_\Omega \biggl[ \bigl(1-\rho_\eps(u_j-u)\bigr) F(\frarg,u) \frac{u_j-u}{\abs{u_j-u}^2} \biggr] \cdot \overline{(u_j-u)} \dd x \\
  &=: \lim_{j\to\infty} \int_\Omega k_\eps(\frarg,u_j) \cdot \overline{(u_j-u)} \dd x\\
  &=\phantom{:} \ddprb{g_\eps \otimes I, \omega},
\end{align*}
where $k_\eps \in \Rbf^p(\Omega;\C^N)$ and $g_\eps(x,z,q) := k_\eps(x,z) \cdot q$. Consequently,
\[
  \ddprb{F,\nu} = \int_\Omega F(\frarg,u) \dd x - \ddprb{g_\eps \otimes I, \omega} + \ddprb{f_\eps \otimes I, \omega} + \BigO(\eps).
\]

Let now $(u_j), (v_j) \subset \Lrm^p(\Omega;\C^N)$ with $u_j \toweak u$ and $v_j \toweak u$ both generate $\omega \in \MCF^p(\Omega;\C^N)$ and the Young measures $\nu_1, \nu_2 \in \Ybf^p(\Omega;\C^N)$. Then, the previous arguments show $\ddprb{F,\nu_1} - \ddprb{F,\nu_2} = \BigO(\eps)$ for all $\eps > 0$. Thus, $\nu_1 = \nu_2$.
\end{proof}

\section{Oscillations and concentrations} \label{sc:osc_conc}

This section considers a few standard examples and showcases techniques to explicitly calculate the generated microlocal compactness forms. We also introduce a shorthand notation for the action of MCFs.

\subsection{Oscillations}

Consider the prototypical oscillating sequence
\begin{equation} \label{eq:osc_ex_seq}
  u_j(x) = A \ONE_{(0,\theta)}(jx \cdot n_0 - \floor{jx \cdot n_0}) + B \ONE_{(\theta,1)}(jx \cdot n_0 - \floor{jx \cdot n_0}),  \qquad x \in \Omega,
\end{equation}
where $A,B \in \C^N$, $n_0 \in \Sbb^{d-1}$, $\ONE_{(0,\theta)}$ is the indicator function of the interval $(0,\theta)$, and $\floor{s}$ denotes the largest integer below $s \in \R$. Intuitively, this sequence oscillates between the values $A$ and $B$, with volume fractions $\theta$ and $1-\theta$, respectively, and in direction $n_0$. The following general principle about oscillations shows that all of these facts can be recovered from the MCF generated by the sequence $(u_j)$:

\begin{lemma}[Oscillations] \label{lem:osc}
Let $w \in \Lrm^\infty(\R;\C^N)$ be $1$-periodic (the \enquote{profile function}) and assume that $w_j(s) := w(js)$ ($s \in \R$) generates the homogeneous Young measure $\nu \in \Mbf^1(\C^N)$. Then, the simple oscillation in direction $n_0 \in \Sbb^{d-1}$,
\begin{equation} \label{eq:uj_osc}
  u_j(x) := w(jx \cdot n_0),  \qquad x \in \Omega,
\end{equation}
generates the microlocal compactness form
\[
  \omega = \Lcal^d \restrict \Omega  \otimes \bigl[ \overline{(z-Z_0)} \, \nu(\di z) \bigr] \otimes \overline{\delta}_{\pm n_0}
  \quad \in \MCF^2(\Omega;\C^N),
\]
where $Z_0 := \dashint_0^1 w \dd s$ is the average of $w$ over one period cell and $\overline{\delta}_{\pm n_0} := \overline{\delta}_{-n_0} + \overline{\delta}_{+n_0}$. This notation means that for all $f \in \Fbf^2(\Omega;\C^N)$, $\Psi \in \Mcal$,
\[
  \ddprb{f \otimes \overline{\Psi}, \omega} = \int_\Omega \int h(x,z) \cdot \overline{\bigl[ \Psi(+n_0)+\Psi(-n_0) \bigr] (z-Z_0)} \dd \nu(z) \dd x.
\]
\end{lemma}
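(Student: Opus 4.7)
Since $(u_j)\subset \Lrm^\infty(\Omega;\C^N)$ uniformly, Lemma~\ref{lem:pq_conc} yields $\omega_x^\infty = 0$ and allows me to work entirely in the $\Lrm^2$-setting. The hypothesis that $w(js)$ generates $\nu$ on $[0,1)$ forces the weak limit $u_j\toweak u\equiv Z_0$ in $\Lrm^2(\Omega;\C^N)$, so by Lemma~\ref{lem:eliminate_Rlim} it is enough to compute
\[
 \lim_{j\to\infty} \int_\Omega h(x,u_j)\cdot \overline{T_\Psi[V_j]}\dd x, \qquad V_j(x) := \bigl(w(jn_0\cdot x) - Z_0\bigr)\,\ONE_\Omega(x),
\]
where I set $\tilde v := w-Z_0$ (zero mean on the torus), so that $V_j(x)=\tilde v(jn_0\cdot x)\ONE_\Omega(x)$.

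\textbf{Frozen-coefficient asymptotics.} I expand $\tilde v$ in its $\Lrm^2$-convergent Fourier series $\tilde v(s)=\sum_{k\neq 0}c_k\,e^{2\pi\ii k s}$, so that $V_j=\sum_{k\neq 0}c_k\,e^{2\pi\ii kjn_0\cdot x}\ONE_\Omega$. For each fixed $k\neq 0$, the Fourier transform of $e^{2\pi\ii kjn_0\cdot x}\ONE_\Omega$ is $\widehat{\ONE_\Omega}(\cdot-kjn_0)$, and as $j\to\infty$ its support sits in a shrinking conic neighbourhood of the direction $\mathrm{sign}(k)\,n_0\in \Sbb^{d-1}$. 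Because $\Psi$ is continuous away from $0$, the substitution $\xi=\eta+kjn_0$ in the multiplier integral together with dominated convergence (alternatively: a modal application of the Commutation Lemma~\ref{lem:commutation}) produces the frozen-coefficient estimate
\[
 \bigl\| T_\Psi\bigl[e^{2\pi\ii kjn_0\cdot x}\ONE_\Omega\bigr] - \Psi(\mathrm{sign}(k)\,n_0)\, e^{2\pi\ii kjn_0\cdot x}\,\ONE_\Omega \bigr\|_2 \tolong 0.
\]

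\textbf{Passage to the limit.} Plugging this in, both factors $u_j(x)=w(jn_0\cdot x)$ and $e^{-2\pi\ii kjn_0\cdot x}=e^{-2\pi\ii k s_j(x)}$ are functions of the equidistributing phase $s_j(x):=jn_0\cdot x\bmod 1$. A standard Young-measure / Riemann--Lebesgue argument on the bounded domain $\Omega$ therefore gives
\[
 \int_\Omega h(x,u_j(x))\,e^{-2\pi\ii kjn_0\cdot x}\dd x \tolong \int_\Omega \int_0^1 h(x,w(s))\,e^{-2\pi\ii k s}\dd s\,\dd x.
\]
Summing over $k\neq 0$ and using that $\int_0^1 h(x,w(s))\overline{\tilde v(s)}\dd s = \int h(x,z)\overline{(z-Z_0)}\dd\nu(z)$ (by the definition of $\nu$ as the pushforward of Lebesgue measure on $[0,1)$ under $w$), after grouping the contributions of positive and negative Fourier modes against $\Psi(+n_0)$ and $\Psi(-n_0)$ respectively, reassembles the claimed identity.

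\textbf{Main obstacle.} The delicate step is commuting the Fourier sum $\sum_{k\neq 0}$ with both the $j$-limit and with $T_\Psi$, since $\tilde v\in\Lrm^\infty$ only has an $\Lrm^2$-convergent (not absolutely convergent) Fourier series. My plan is to first establish the identity for trigonometric-polynomial profiles $w$, where only finitely many modes contribute and every step above is manifestly legal, and then pass to general $w$ by an approximation argument exploiting the basic MCF estimate~\eqref{eq:omega_est} together with the continuous dependence of both sides on the profile $\tilde v$ in $\Lrm^2(\R/\Z;\C^N)$.
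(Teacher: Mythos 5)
Your route is genuinely different from the paper's: the paper never decomposes the profile into harmonics, but localizes $\Psi$ by conic cut-offs $\rho_\pm$ around $\pm n_0$ and uses the standing-wave fact that $\Fcal[\phi_2 u_j]$ is supported in a tube around the line $\R n_0$; you instead expand $\tilde v = w - Z_0$ into Fourier modes, freeze the symbol at $\pm n_0$ mode by mode, and use equidistribution. Your frozen-coefficient estimate and the limit $\int_\Omega h(x,u_j)\,\ee^{-2\pi\ii kjn_0\cdot x}\dd x \to \int_\Omega\int_0^1 h(x,w(s))\,\ee^{-2\pi\ii ks}\dd s\dd x$ are both correct, and the interchange issue you flag can indeed be handled by first treating trigonometric profiles; note, however, that for the approximation step $\Lrm^2$-closeness of profiles alone does not control $\norm{h(\cdot,u_j)-h(\cdot,u_j')}_2$ for nonlinear $h$, so you should approximate by Fej\'{e}r means (uniformly bounded by $\norm{w}_\infty$) rather than raw partial sums.

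The genuine gap is the final sentence, \enquote{grouping \ldots reassembles the claimed identity}. What your computation actually yields is
\[
  \int_\Omega \biggl[\, \sum_{k>0} \hat H_x(k)\cdot\overline{\Psi(+n_0)\,c_k} \;+\; \sum_{k<0} \hat H_x(k)\cdot\overline{\Psi(-n_0)\,c_k} \,\biggr] \dd x,
  \qquad \hat H_x(k):=\int_0^1 h(x,w(s))\,\ee^{-2\pi\ii ks}\dd s,
\]
that is, by Parseval on the torus, $\Psi(+n_0)$ paired only with the positive-frequency part $\tilde v^+$ of $w-Z_0$ and $\Psi(-n_0)$ only with $\tilde v^-$. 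The asserted formula applies $\Psi(+n_0)+\Psi(-n_0)$ to the \emph{full} oscillation $(z-Z_0)$, i.e.\ to all modes; the difference consists of the cross terms $\sum_{k>0}\hat H_x(k)\cdot\overline{\Psi(-n_0)c_k}+\sum_{k<0}\hat H_x(k)\cdot\overline{\Psi(+n_0)c_k}$, and these do not vanish in general. Concretely, for $w(s)=\ee^{2\pi\ii s}$ (only $k=1$, $Z_0=0$) and $h(x,z)=z$, your mode computation---and a direct evaluation of the defining double limit---gives $\overline{\Psi(+n_0)}\,\abs{\Omega}$, whereas the displayed right-hand side gives $\overline{\Psi(+n_0)+\Psi(-n_0)}\,\abs{\Omega}$; even for $\Psi\equiv I$, Lemma~\ref{lem:eliminate_Rlim} forces the value $\int_\Omega\int h(x,z)\cdot\overline{(z-Z_0)}\dd\nu\dd x$ (one copy, which is what your expression gives), not twice it. So the regrouping is not a formal bookkeeping step: either you prove the mode-split formula as your conclusion, or you must isolate and justify additional symmetry hypotheses on $w$ under which the cross terms disappear. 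This is precisely the point where the paper's argument does something you cannot appeal to: there, after freezing $\Psi(\pm n_0)$, the factor $\rho_\pm$ is dropped on the grounds that $\supp\Fcal[\phi_2u_j]$ lies in the single cone $K_\pm(\alpha)$, whereas your decomposition shows that for any profile containing both positive and negative harmonics the support meets both cones, so that step is not available to patch your proof either.
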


Of course, the exponent $p = 2$ here is chosen arbitrarily and can be replaced by any $p \in (1,\infty)$. We also remark that the notation $\overline{\delta}_{\pm n_0}$ indicates that the action of our MCF on $\Psi$ is anti-linear (as opposed to a Dirac measure which would be linear).

\begin{proof}
Since all statements are of a local nature (and we will indeed localize using a compactly supported cut-off function $\phi$), we assume without loss of generality that $\Omega = \R^d$. In general we are thus neglecting effects at the boundary $\partial \Omega$, but it can be seen easily that the sequence $(u_j)$ does not concentrate at all ($\lambda_\omega = 0$), whence this is of no concern.

\begin{figure}[t]
\def\svgscale{0.62}
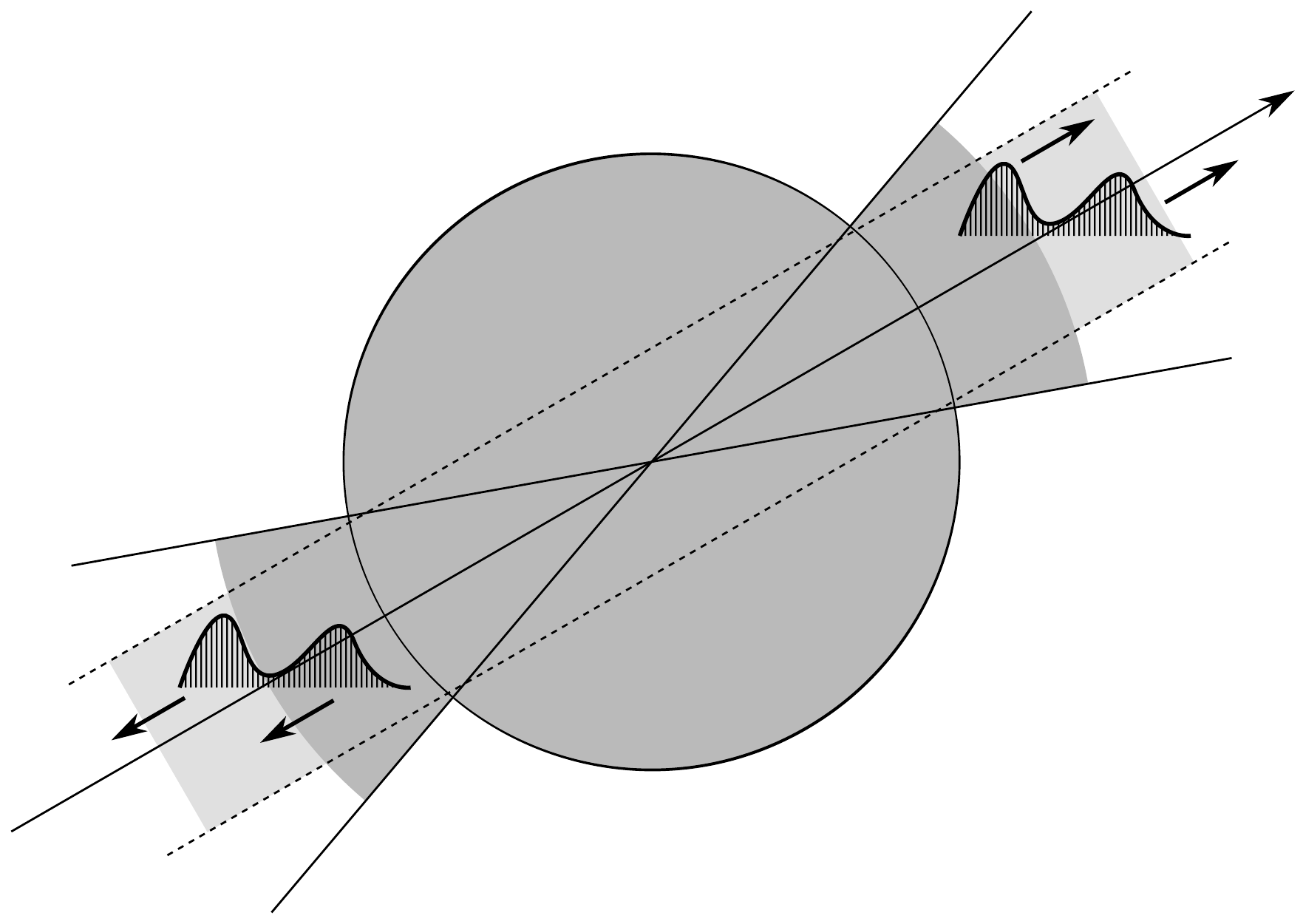
\caption{Construction in the proof of Lemma~\ref{lem:osc}.}
\label{fig:osc_lem_proof}
\end{figure}

Take any $f(x,z,q) = \phi(x) g(z) \cdot q \in \Fbf^p(\R^d;\C^N)$ with $\phi \in \Crm_c^\infty(\R^d)$, $g \in \Crm(\C^N;\C^N)$ and $\Psi \in \Mcal$ from the collection exhibited in Lemma~\ref{lem:countable_test}. It suffices to show the assertion for such $f$, $\Psi$. For $\alpha > 0$ define the cones (see Figure~\ref{fig:osc_lem_proof})
\[
  K_\pm(\alpha) := \setb{ x \in \R^d }{ \text{$\abs{x - (x \cdot n_0)n_0} \leq \alpha \abs{x \cdot n_0}$ and $x \cdot n_0 \gtrless 0$} }.
\]
Now, for $\eps > 0$ arbitrary, choose $\alpha > 0$ so small that
\begin{equation} \label{eq:Psi_close}
  \abs{\Psi(\xi)-\Psi(\pm n_0)} < \eps \qquad\text{for all $\xi \in K_\pm(2\alpha) \cap \Sbb^{d-1}$.}
\end{equation}
Next, take smooth cut-off functions $\rho_\pm \in \Crm^\infty(\Sbb^{d-1})$ with $\rho_\pm(\xi) = 1$ for $\xi \in K_\pm(\alpha) \cap \Sbb^{d-1}$, $\supp \rho_\pm \subset K_\pm(2\alpha) \cap \Sbb^{d-1}$. It always holds that $\supp \rho_+ \cap \supp \rho_- = \emptyset$. 

We will now prove the relations
\begin{align}
  &\ddprb{\phi g \otimes \overline{\rho_\pm \Psi}, \omega} = \lim_{j\to\infty} \int_{\R^d} \phi g(u_j) \cdot \overline{\Psi(\pm n_0)(u_j-Z_0)} \dd x, \label{eq:osc_est_sector}\\
  &\ddprb{\phi g \otimes \overline{(\ONE-\rho_+-\rho_-)\Psi}, \omega} = 0  \label{eq:osc_est_remainder}
\end{align}
for all $\phi = \phi_1 \overline{\phi_2}$ with $\phi_1,\phi_2 \in \Scal(\R^d)$ such that the Fourier transforms $\hat{\phi}_1,\hat{\phi}_2$ have compact support. An approximation argument then yields these relations for our original $\phi \in \Crm_c^\infty(\R^d)$ as well.

We first prove~\eqref{eq:osc_est_remainder}. By Lemma~\ref{lem:phi_exchange} and Parseval's relation we have
\begin{align*}
  &\ddprb{\phi g \otimes \overline{(\ONE-\rho_+-\rho_-)\Psi}, \omega} \\
  &\qquad = \lim_{R\to\infty} \lim_{j\to\infty} \int_{\R^d} \Psi^* \biggl(\frac{\xi}{\abs{\xi}}\biggr) \Fcal \bigl[\phi_1 g(u_j) \bigr](\xi) \cdot \overline{\Fcal[\phi_2 u_j](\xi)} \cdot \zeta_R(\xi) \dd x,
\end{align*}
where
\[
  \zeta_R(\xi) = (1-\eta_R(\xi)) (\ONE-\rho_+-\rho_-)\biggl(\frac{\xi}{\abs{\xi}}\biggr).
\]
The remarks in Section~\ref{ssc:Fourier} about Fourier transforms of standing waves entail that $\hat{u}_j$ (a tempered distribution) has support in the line $\R n_0$. Hence, $\Fcal[\phi_2 u_j] = \hat{\phi}_2 \conv \Fcal[u_j]$ is a $\Crm^\infty$-function with support in a tubular neighborhood $\{\xi : \dist(\xi,\R n_0) < T\}$ of the line $\R n_0$ ($0 < T < \infty$). The function $\zeta$ has support in the set
\[
  \bigl( K_+(\alpha) \cup K_-(\alpha) \cup B(0,R) \bigr)^c,
\]
see Figure~\ref{fig:osc_lem_proof}. Elementary geometry yields that for $\xi \in \supp \zeta$ it holds that $\dist(\xi,\R n_0) \geq C(\alpha) R$ (in fact, $C(\alpha) = (1-\alpha^{-2})^{-1/2}$). Hence we may assume that $R$ is so large that $\supp \zeta$ does not meet $\supp \Fcal[\phi_2 u_j]$ for any $j$. Thus,~\eqref{eq:osc_est_remainder} holds.

For~\eqref{eq:osc_est_sector} use similar arguments as before to see that
\begin{align*}
  &\ddprb{\phi g \otimes \overline{\rho_\pm \Psi}, \omega} \\
  &= \lim_{R\to\infty} \lim_{j\to\infty} \int_{\R^d} \Fcal \bigl[ \phi_1 g(u_j) \bigr](\xi) \cdot \overline{\Psi \biggl(\frac{\xi}{\abs{\xi}}\biggr) \Fcal[\phi_2 u_j](\xi)} \cdot \rho_\pm  \biggl(\frac{\xi}{\abs{\xi}}\biggr) (1-\eta_R(\xi)) \dd \xi \\
  &= \lim_{R\to\infty} \lim_{j\to\infty} \int_{\R^d} \Fcal \bigl[ \phi_1 g(u_j) \bigr](\xi) \cdot \overline{\Psi(\pm n_0) \Fcal[\phi_2 u_j](\xi)} \cdot \rho_\pm  \biggl(\frac{\xi}{\abs{\xi}}\biggr) (1-\eta_R(\xi)) \dd \xi \\
  &\qquad + \ddprb{\phi g \otimes \overline{\rho_\pm \cdot (\Psi(\frarg)-\Psi(\pm n_0))}, \omega}
\end{align*}
and the last error term is of order $\BigO(\eps)$ by~\eqref{eq:Psi_close}. A geometric argument analogous to the one employed for establishing~\eqref{eq:osc_est_remainder} shows that for $j$ sufficiently large, the support of $\Fcal[\phi_2 u_j]$ is contained in $K_\pm(\alpha)$ and hence the factor $\rho_\pm(\xi/\abs{\xi})$ in the last integral is equal to $1$. Finally, apply Parseval's relation and Lemma~\ref{lem:phi_exchange} again to get
\[
  \ddprb{\phi g \otimes \overline{\rho_\pm \Psi}, \omega}
  = \lim_{R\to\infty} \lim_{j\to\infty} \int_{\R^d} \phi g(u_j) \cdot \overline{T_{(1-\eta_R)\Psi(\pm n_0)}[u_j]} \dd x + \BigO(\eps).
\]
An application of Lemma~\ref{lem:eliminate_Rlim} and letting $\eps \to 0$ then yields~\eqref{eq:osc_est_sector}.

Employing~\eqref{eq:osc_est_sector},~\eqref{eq:osc_est_remainder}, we have shown
\begin{align*}
  \ddprb{\phi g \otimes \overline{\Psi}, \omega}
    &= \lim_{j\to\infty} \int_{\R^d} \phi g(u_j) \cdot \overline{[\Psi(+n_0)+\Psi(-n_0)] (u_j-Z_0)} \dd x \\
  &= \int_{\R^d} \int \phi g(z) \cdot \overline{\bigl[ \Psi(+n_0)+\Psi(-n_0) \bigr] (z-Z_0)} \dd \nu(z) \dd x,
\end{align*}
where the last equality follows from the usual Young measure theory, see Section~\ref{ssc:MCF_YM_HM}.
\end{proof}

We can now compute the MCF generated by the example sequence at the beginning of this section. We stress that both the value distribution and the \emph{direction} of the oscillation are reflected in the generated MCF.

\begin{example} \label{ex:osc}
Let $u_j$ be as in~\eqref{eq:osc_ex_seq}. By the preceding Oscillation Lemma the sequence $(u_j)$ generates the MCF
\[
  \omega = \Lcal^d \restrict \Omega \otimes \bigl[ \theta \overline{(A-M)} \delta_A + (1-\theta) \overline{(B-M)} \delta_B \bigr] \otimes \overline{\delta}_{\pm n_0}
  \;\in\;\MCF^2(\Omega;\C^N),
\]
where $M := \theta A + (1-\theta)B$.
\end{example}

For first-order laminates as in the last example, we always encounter a characteristic component of the form $\theta \cl{(A-M)} \delta_A + (1-\theta)\cl{(B-M)} \delta_B$. For higher-order laminates or general nested microstructures, this becomes more complicated. The MCF corresponding to a second-order laminate is computed later in Example~\ref{ex:2nd_order_lam} using Proposition~\ref{prop:lamination}, which investigates higher-order laminations. We will see then that the MCF also reflects the \emph{hierarchy} of laminations at different scales.

\begin{remark}
One can extend Lemma~\ref{lem:osc} to $1$-periodic profile functions $w \in \Lrm^p(\R;\C^N)$ (possibly unbounded). The result, with the space $\MCF^2$ replaced by $\MCF^p$, and the proof remain largely the same except for the following two modifications: Parseval's theorem only holds if we assume that all $u_j$ are of class $\Crm_c^\infty$, see Lemma~\ref{lem:better_generation}. Also, to make the usual Young measure theory applicable one needs to use the fact that the family $(u_j)$ as defined in~\eqref{eq:uj_osc} is equiintegrable.
\end{remark}

\subsection{Concentrations} \label{ssc:conc}

Besides oscillations, MCFs also represent concentration phenomena. The following is a simple example:

\begin{example} \label{ex:conc}
Let $w \in \Lrm^p(\R^d)$ have compact support and satisfy $w \geq 0$. Further, let $Z_0 \in \C^N$ with $\abs{Z_0} = 1$. Define
\[
  u_j(x) := Z_0 \, j^{d/p} w(jx),  \qquad x \in \R^d.
\]
Then, $(u_j)$ generates the MCF
\[
  \omega = \delta_0 \otimes  \overline{Z_0} \delta_{\infty Z_0} \otimes \bar{\mu}  \quad\in\quad \MCF^p(\R^d;\C^N),
\]
where $\bar{\mu}$ is the surface measure (acting on $\overline{\Psi}$)
\begin{equation} \label{eq:conc_meas}
  \bar{\mu} = \biggl( \int_0^\infty \Fcal \bigl[\abs{w}^{p-1}\bigr](t \eta) \, \overline{\hat{w}(t \eta)} \, t^{d-1} \dd t \biggr) \, \bigl( \Hcal^{d-1} \restrict \Sbb^{d-1} \bigr) (\di \eta).
\end{equation}
The above expression for $\omega$ is a shorthand notation for the MCF that acts on $f(x,z,q) = h(x,z) \cdot q \in \Fbf^p(\Omega;\C^N)$, $\Psi \in \Mcal$, as
\[
  \ddprb{f \otimes \overline{\Psi},\omega} = h^\infty(0,Z_0) \cdot \int_{\Sbb^{d-1}} \overline{\Psi(\xi) Z_0} \dd \bar{\mu}(\xi).
\]
We only sketch the proof: Let $f, \Psi$ be as before and consider
\begin{align*}
  J_{j,R} &:= \int h(\frarg,u_j) \cdot \overline{T_{(1-\eta_R)\Psi}[u_j]} \dd x \\
  &\phantom{:}= \int h \bigl( x,Z_0 \, j^{d/p} w(jx) \bigr) \cdot \overline{T_{(1-\eta_R)\Psi} \bigl[ Z_0 \, j^{d/p} w(jx') \bigr](x)} \dd x \\
  &\phantom{:}\approx \int h \bigl( 0,Z_0 \, j^{d/p} w(jx) \bigr) \cdot \overline{T_{(1-\eta_R)\Psi} \bigl[ Z_0 \, j^{d/p} w(jx') \bigr](x)} \dd x
\end{align*}
for $j$ large, where \enquote{$\approx$} means that the error vanishes as $j\to\infty$, $R\to\infty$. Here we used the fact that the support of $w(jx)$ shrinks to zero. Then, we employ $T_{(1-\eta_R)\Psi}[w(jx')](x) = T_{(1-\eta_{R/j})\Psi}[w](jx)$ to calculate further
\begin{align*}
  J_{j,R} &\approx \int \frac{h(0,Z_0 \, j^{d/p} w)}{j^{d(p-1)/p}} \cdot \overline{T_{(1-\eta_{R/j})\Psi}[Z_0 w]} \dd y \\
  &\approx \int h^\infty(0,Z_0) \abs{w}^{p-1} \cdot \overline{T_{(1-\eta_{R/j})\Psi}[Z_0 w]} \dd y.
\end{align*}
For the last expression we used that
\[
  j^{-d(p-1)/p} h(0,Z_0 \, j^{d/p} w(y)) \to h^\infty(0,Z_0) \abs{w(y)}^{p-1}
\]
for almost every $y$ and then in $\Lrm^{p/(p-1)}$ since we have a uniform $\Lrm^{p/(p-1)}$-majorant. Finally, use Parseval's formula to conclude
\begin{align*}
  \ddprb{f \otimes \overline{\Psi}, \omega} &= \lim_{R\to\infty} \lim_{j\to\infty} J_{j,R} \\
  &= h^\infty(0,Z_0) \cdot \int \abs{w}^{p-1} \cdot \overline{T_{\Psi}[Z_0 w]} \dd y \\
  &= h^\infty(0,Z_0) \cdot \int \Fcal[\abs{w}^{p-1}](\xi) \cdot \overline{\Psi(\xi)Z_0 \hat{w}(\xi)} \dd \xi \\
  &= h^\infty(0,Z_0) \cdot \int \Fcal \bigl[ \abs{w}^{p-1} \bigr](\xi) \, \overline{\hat{w}(\xi)} \cdot \overline{\Psi(\xi) Z_0} \dd \xi.
\end{align*}
Transforming to polar coordinates, we arrive at~\eqref{eq:conc_meas}.
\end{example}

We also prove the following interesting converse to Lemma~\ref{lem:pq_conc}:

\begin{proposition}[Equiintegrability] \label{prop:equiint}
Let $(u_j) \subset \Lrm^p(\Omega;\C^N)$ generate $\omega \in \MCF^p(\Omega;\C^N)$. Then, the following are equivalent:
\begin{itemize}
\item[(i)] $(u_j)$ is $p$-equiintegrable.
\item[(ii)] $\dprb{\abs{z}^{p-2}z \otimes I, \omega_x^\infty} = 0$ for $\lambda_\omega$-a.e.\ $x \in \cl{\Omega}$.
\item[(iii)] $\omega_x^\infty = 0$ for $\lambda_\omega$-a.e.\ $x \in \cl{\Omega}$.
\end{itemize}
\end{proposition}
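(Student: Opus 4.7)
The plan is to prove the cyclic chain $(i) \Rightarrow (iii) \Rightarrow (ii) \Rightarrow (i)$, with $(iii) \Rightarrow (ii)$ being immediate since on $\partial \C\B^N$ the field $z \mapsto \abs{z}^{p-2}z$ coincides with the identity $z \mapsto z$.

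For $(i) \Rightarrow (iii)$, the key idea is to isolate the $\omega^\infty$-part of the duality pairing by cutting off at large values of $z$. Fix arbitrary $f(x,z,q) = h(x,z) \cdot q \in \Fbf^p(\Omega;\C^N)$ and $\Psi \in \Mcal$, and pick $\rho \in \Crm_c^\infty(\C^N;[0,1])$ with $\rho \equiv 1$ on $\C\B^N$. The modified integrand $f_K(x,z,q) := (1-\rho(z/K)) h(x,z) \cdot q$ satisfies $f_K \in \Fbf^p$ and $f_K^\infty = f^\infty$, while the factor $(1-\rho(u_j/K))$ is supported in $\{\abs{u_j} \ge K\}$. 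Using the defining limit in Theorem~\ref{thm:omega} together with H\"older and the Mihlin multiplier estimate~\eqref{eq:T_psi_est},
\[
\absb{\ddprb{f_K \otimes \overline{\Psi}, \omega}} \le C \norm{f}_{\Fbf^p} \norm{\Psi}_\Mcal \biggl(\sup_j \int_{\{\abs{u_j} \ge K\}}(1+\abs{u_j})^p \di x\biggr)^{(p-1)/p} \sup_j \norm{u_j}_p,
\]
and $p$-equiintegrability of $(u_j)$ forces the right-hand side to vanish as $K \to \infty$. On the other hand, the representation~\eqref{eq:omega_ddpr} splits $\ddprb{f_K \otimes \overline{\Psi}, \omega}$ into an $\omega_x$-part and an $\omega_x^\infty$-part; the former tends to $0$ by dominated convergence (pointwise vanishing of $(1-\rho(z/K))h(x,z)$ in $z$, with $(p-1)$-growth majorant controlled via Definition~\ref{def:MCF}(v)). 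Hence the $\omega_x^\infty$-integral vanishes identically, and varying $h^\infty$ and $\Psi$ over the countable dense collections of Lemma~\ref{lem:countable_test} forces $\omega_x^\infty = 0$ for $\lambda_\omega$-a.e.\ $x$.

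For $(ii) \Rightarrow (i)$, I would test with $f(x,z,q) := \phi(x)\abs{z}^{p-2}z \cdot q \in \Fbf^p(\Omega;\C^N)$ for arbitrary $\phi \in \Crm(\cl\Omega)$, whose recession is $h^\infty(x,z) = \phi(x)\abs{z}^{p-2}z$. The representation~\eqref{eq:omega_ddpr} with $\Psi = I$ combined with hypothesis~(ii) reduces $\ddprb{f \otimes I, \omega}$ to $\int_\Omega \phi\,\dprb{\abs{z}^{p-2}z \otimes I, \omega_x}\di x$. Simultaneously, after extracting a subsequence with $u_j \toweak u$ in $\Lrm^p$ and $\abs{u_j}^{p-2}u_j \toweak \alpha$ in $\Lrm^{p/(p-1)}$, Lemma~\ref{lem:eliminate_Rlim} together with the identity $\abs{u_j}^{p-2}u_j \cdot \overline{u_j} = \abs{u_j}^p$ evaluates the same pairing as $\int \phi \di \lambda_\omega - \int \phi \, \alpha \cdot \overline{u} \di x$. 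Equating the two expressions for every $\phi$ forces $\lambda_\omega \ll \Lcal^d$. From the absolute continuity of the weak* limit of the nonnegative family $(\abs{u_j}^p)$ one extracts $p$-equiintegrability via a Chacon biting argument: a biting decomposition produces sets $\Omega_n$ with $\abs{\Omega_n} \to 0$ on whose complement $\abs{u_j}^p$ is equiintegrable, while the weak* limit of the residual $\abs{u_j}^p \ONE_{\Omega_n}$ is dominated by the absolutely continuous $\lambda_\omega$ and supported in the limit on a null set, hence vanishes; Dunford--Pettis then delivers weak $\Lrm^1$-convergence of $(\abs{u_j}^p)$, equivalent to $p$-equiintegrability.

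The main obstacle I anticipate is the final deduction in $(ii) \Rightarrow (i)$: the passage from absolute continuity of $\lambda_\omega$ to uniform $p$-equiintegrability crucially uses the nonnegativity of $\abs{u_j}^p$ and the biting decomposition, while the other two implications reduce to cut-off bookkeeping and a routine dominated convergence against the $(p-1)$-growth bound supplied by Definition~\ref{def:MCF}(v).
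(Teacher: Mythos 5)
Your implications $(iii)\Rightarrow(ii)$ (restriction to the unit sphere, where $\abs{z}^{p-2}z=z$) and $(i)\Rightarrow(iii)$ (cutting off at $\abs{z}\geq K$, killing the $\omega_x$-part by dominated convergence on the open ball, estimating the remainder via equiintegrability) are both sound and run essentially parallel to the paper's argument, which uses the piecewise-linear cut-off $g_K$ where you use a smooth $\rho$; the bookkeeping is the same.

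The gap is in $(ii)\Rightarrow(i)$. The computation up to
$\lambda_\omega = \bigl[\dprb{\abs{z}^{p-2}z\otimes I,\omega_x}+\alpha\cdot\overline{u}\bigr]\,\Lcal^d$
is correct and does give $\lambda_\omega\ll\Lcal^d$, but you have thereby \emph{discarded} the content of (ii): absolute continuity of the weak* limit of $\abs{u_j}^p\,\Lcal^d$ does \emph{not} imply $p$-equiintegrability of $(u_j)$. A concrete obstruction: on $\Omega=(0,1)$, take
\[
  u_j(x) := j^{1/p}\sum_{k=0}^{j-1}\ONE_{[k/j,\;k/j+j^{-2}]}(x).
\]
Then $\int_0^1\abs{u_j}^p\dd x=1$, and for every $\phi\in\Crm([0,1])$ one has $\int\phi\,\abs{u_j}^p\dd x\to\int_0^1\phi\dd x$, so $\lambda_\omega=\Lcal^1\restrict(0,1)$ is absolutely continuous; yet $\int_{\{\abs{u_j}^p>M\}}\abs{u_j}^p\dd x=1$ for every $j>M$, so $(u_j)$ is not $p$-equiintegrable. (For this sequence one computes $\dprb{\abs{z}^{p-2}z\otimes I,\omega_x^\infty}\equiv 1$, so (ii) fails, consistent with the proposition; the point is only that $\lambda_\omega\ll\Lcal^d$ is strictly weaker than (ii).) The biting step as you phrase it also does not hold: the biting sets $\Omega_n$ have small \emph{Lebesgue measure}, but the topological support of the residual weak* limit of $\abs{u_j}^p\ONE_{\Omega_n}$ need not shrink to a null set --- in the example above it is all of $[0,1]$ for every $n$, and the residual weak* limit equals $\Lcal^1$. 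In general $\lambda_\omega = f\,\Lcal^d + \sigma$ with $f$ the biting limit and $\sigma\geq 0$ the concentration measure, and $\sigma$ may well be absolutely continuous.

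The paper avoids this entirely by a direct sandwich. With $g_K$ as in their proof and $f_K(x,z,q)=\phi(x)g_K(\abs{z})\abs{z}^{p-2}z\cdot q$, one establishes
\[
  \ddprb{f_{2K}\otimes I,\omega}\;\leq\;\limsup_{j\to\infty}\int_{\{\abs{u_j}\geq K\}}\phi\,\abs{u_j}^p\dd x\;\leq\;\ddprb{f_K\otimes I,\omega},
\]
using $g_{2K}\leq\ONE_{[K,\infty)}\leq g_K$ together with Lemma~\ref{lem:eliminate_Rlim}; the cross-terms $\int\phi\,g_K(\abs{u_j})\abs{u_j}^{p-2}u_j\cdot\overline{u}\dd x$ are controlled by the Dunford--Pettis equiintegrability of $(\abs{u_j}^{p-1}\abs{u})_j$. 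Both outer quantities converge, as $K\toup\infty$, to $\int_{\cl\Omega}\phi\,\dprb{\abs{z}^{p-2}z\otimes I,\omega_x^\infty}\dd\lambda_\omega$, which vanishes under (ii). This gives $\lim_{K\to\infty}\limsup_j\int_{\{\abs{u_j}\geq K\}}\abs{u_j}^p\dd x=0$ directly, i.e., $p$-equiintegrability, without ever passing through (and losing information at) $\lambda_\omega$.
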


\begin{proof}
We first show (i) $\Leftrightarrow$ (ii). Let $K > 0$, set
\[
  g(t) := \begin{cases}
            0     & \text{if $0 < t < \frac{1}{2}$,} \\
            2t-1  & \text{if $\frac{1}{2} \leq t \leq 1$,} \\
            1     & \text{if $t > 1$,}
          \end{cases}
  \qquad
  g_K(t) := g \biggl( \frac{t}{K} \biggr),
  \qquad
  t \geq 0,
\]
and observe  $g_{2K} \leq \ONE_{[K,\infty)} \leq g_K$. Then define $f_K(x,z,q) = h_K(x,z) \cdot q \in \Fbf^p(\Omega;\C^N)$ through
\[
  h_K(x,z) := \phi(x) g_K(\abs{z}) \cdot \abs{z}^{p-2} z,
\]
where $\phi \in \Crm^\infty(\cl{\Omega})$, and for which we may compute $h_K^\infty(x,z) = \phi(x) \abs{z}^{p-2} z$.

Using Lemma~\ref{lem:eliminate_Rlim} we have
\begin{align*}
  \ddprb{f_{2K} \otimes I,\omega} &= \lim_{j\to\infty} \int_\Omega h_{2K}(\frarg,u_j) \cdot \overline{(u_j-u)} \dd x \\
  &\leq \limsup_{j\to\infty} \int_{\Omega\cap\{\abs{u_j} \geq K\}} \phi \, \abs{u_j}^p \dd x \\
  &\qquad - \lim_{j\to\infty} \int_\Omega \phi \, g_{2K}(\abs{u_j}) \cdot \abs{u_j}^{p-2} u_j \cdot \overline{u} \dd x.
\end{align*}
We claim that
\begin{equation} \label{eq:conc_equiint}
  \lim_{K\toup\infty}\, \limsup_{j\to\infty} \int_\Omega g_{2K}(\abs{u_j}) \cdot \abs{u_j}^{p-2} u_j \cdot \overline{u} \dd x = 0.
\end{equation}
This is equivalent to the equiintegrability of the functions $(\abs{u_j}^{p-1} \cdot \abs{u})_j$, which can be seen as follows: $(\abs{u_j}^{p-1})_j$ is uniformly bounded in $\Lrm^{p/(p-1)}(\Omega)$, thus, up to a subsequence, $\abs{u_j}^{p-1} \toweak U \in \Lrm^{p/(p-1)}(\Omega)$. In particular, $\abs{u_j}^{p-1} \cdot \abs{u} \toweak U \cdot \abs{u}$ weakly in $\Lrm^1(\Omega)$. Hence, $(\abs{u_j}^{p-1} \cdot \abs{u})_j$ is weakly relatively compact in $\Lrm^1(\Omega)$ and consequently, the Dunford--Pettis criterion implies that this sequence is equiintegrable and~\eqref{eq:conc_equiint} holds.

Thus, using an analogous argument for the upper bound, we arrive at
\[
  \ddprb{f_{2K} \otimes I,\omega} \leq \limsup_{j\to\infty} \int_{\Omega\cap\{\abs{u_j} \geq K\}} \phi \, \abs{u_j}^p \dd x 
  \leq \ddprb{f_K \otimes I,\omega}
\]
and together with
\begin{align*}
  \lim_{K\toup\infty}\; \ddprb{f_K \otimes I,\omega}
  &= \lim_{K\toup\infty}\, \int_\Omega \phi(x) \, \dprb{g_K(\abs{z}) \cdot \abs{z}^{p-2} z \otimes I,\omega_x} \dd x \\
  &\qquad + \int_{\cl{\Omega}} \phi(x) \, \dprb{\abs{z}^{p-2} z \otimes I,\omega_x^\infty} \dd \lambda_\omega(x) \\
  &= 0 + \int_{\cl{\Omega}} \phi(x) \, \dprb{\abs{z}^{p-2} z \otimes I,\omega_x^\infty} \dd \lambda_\omega(x)
\end{align*}
the equivalence of (i) and (ii) follows.

Since (iii) clearly implies (ii), it remains to show (i) $\Rightarrow$ (iii). For this, let $f(x,z,q) = h(x,z) \cdot q \in \Fbf^p(\Omega;\C^N)$ and by the $p$-equiintegrability of $(u_j)$ take $K > 0$ so large that
\[
  \supmod_j \int_{\{\abs{u_j} \geq K\}} \abs{h(x,u_j(x))}^{p/(p-1)} \dd x \leq \eps
\]
Thus, if we define
\[
  f_{2K}(x,z,q) := h_{2K}(x,z) \cdot q,
  \quad\text{where}\quad
  h_{2K}(x,z) := (1-g_{2K}(\abs{z})) h(x,z)
\]
with $g_{2K}$ defined as before, then
\[
  \ddprb{f \otimes \overline{\Psi},\omega} = \ddprb{f_{2K} \otimes \overline{\Psi},\omega} + \BigO(\eps)
\]
and letting $K \toup \infty$ while employing $h_{2K}^\infty \equiv 0$, we arrive at
\[
  \int_{\cl{\Omega}} \phi(x) \, \dprb{h^\infty(x,\frarg) \otimes \overline{\Psi},\omega_x^\infty} \dd \lambda_\omega(x) = 0.
\]
Varying $h$ and $\Psi$, the sought implication follows.
\end{proof}

\section{Differential constraints and compensated compactness} \label{sc:diff_constr}

In this section we investigate how differential constraints on the generating sequence are reflected in the corresponding microlocal compactness form.

\subsection{Differential constraints} \label{ssc:diff_constr}

We mostly work in the full-space case $\Omega = \R^d$, which will suffice for our later aims (but see Remark~\ref{rem:domain} below for the case $\Omega \neq \R^d$).

We let $(I-\Delta)^{s/2}$ and $(-\Delta)^{s/2}$ be the Fourier multiplier operators with symbols $(1+4\pi^2\abs{\xi}^2)^{s/2}$ and $(2\pi\abs{\xi})^s$, respectively. Then, $\Wrm^{s,p}(\R^d;\C^N)$ is the space of all distributions $v$ in $\R^d$ such that $(I-\Delta)^{s/2} v \in \Lrm^{p}(\R^d;\C^N)$; as norm we use $\norm{v}_{s,p} := \norm{(I-\Delta)^{s/2} v}_{p}$. In turn, $\Wrm^{-s,p}(\R^d;\C^N)$ is defined to be the dual space to $\Wrm^{s,p'}(\R^d;\C^N)$, where $1/p + 1/p' = 1$.

Let $\Acal$ be a \term{linear constant-coefficient partial differential operator of order $s \in \N$},
\[
  \Acal = \sum_{\abs{\alpha} = s} A^{(\alpha)} \partial^\alpha,
  \qquad \text{$A^{(\alpha)} \in \R^{l \times N}$, $\alpha \in \N_0^d$ with $\abs{\alpha} = \abs{\alpha_1} + \ldots + \abs{\alpha_d} = s$,}
\]
where we have employed the usual multi-index notation. A function $u \in \Lrm^p(\R^d;\C^N)$ with $\Acal u = 0$ in the $\Wrm^{-1,p}$-sense, that is $\int u \cdot \Acal^* \phi \dd x = 0$ for all $\phi \in \Wrm^{1,p'}(\R^d;\C^N)$, where $1/p + 1/p' = 1$ and $\Acal^* = \sum_{\abs{\alpha} = s} [A^{(\alpha)}]^* \partial^\alpha$, is called \term{$\Acal$-free}. The operator $\Acal$ has the \term{symbol}
\[
  \Abb(\xi) := \sum_{\abs{\alpha} = s} A^{(\alpha)} (2\pi\ii \xi)^\alpha
  \quad \in \C^{l \times N}, \qquad \xi \in \R^d.
\]
Moreover, we define the \term{bounded symbol} $\Abb_b$ and the \term{homogeneous symbol} $\Abb_0$ as follows:
\begin{align*}
  \Abb_b(\xi) &:= \frac{1}{(1+4\pi^2\abs{\xi}^2)^{s/2}} \sum_{\abs{\alpha} = s} A^{(\alpha)} (2\pi\ii \xi)^\alpha, \\
  \Abb_0(\xi) &:= \frac{1}{(2\pi\abs{\xi})^s} \sum_{\abs{\alpha} = s} A^{(\alpha)} (2\pi\ii \xi)^\alpha.
\end{align*}
The corresponding operators $\Acal_b := T_{\Abb_b}$ and $\Acal_0 := T_{\Abb_0}$ are bounded from $\Lrm^p$ to itself by the usual multiplier theorems and we have the identities
\[
  \Acal = \Acal_b \circ (I-\Delta)^{s/2} = \Acal_0 \circ (-\Delta)^{s/2},
\]
which we will use to factor $\Acal$ into an $\Lrm^p$-bounded and a differential part.
As an example, consider the operator $\Acal = -\partial/\partial x_j$; then $\Acal_0$ is the $j$th Riesz transform, i.e.\ the multiplier operator with symbol $-\ii \xi_j/\abs{\xi}$, cf.\ Section~4.1.4 in~\cite{Graf08CFA}.

We will sometimes assume the following \term{constant-rank property} on $\Acal$, which was introduced by Murat~\cite{Mura81CCCN} (also see~\cite{FonMul99AQLS}):
\begin{equation} \label{eq:crp}
  \rank \ker \Abb(\xi) = \mathrm{const}
  \qquad \text{for all $\xi \in \Sbb^{d-1}$.}
\end{equation}
It is well-known that many useful operators satisfy this constant-rank property, among them $\diverg$, $\curl$, the static Maxwell operator $\Acal = \bigl(\begin{smallmatrix}\diverg & \diverg \\ 0 & \curl \end{smallmatrix} \bigr)$, and the operator corresponding to the constraints $\partial_1 u^1 + \partial_2 u^2 = 0$, $\partial_1 u^3 + \partial_2 u^4 = 0$ (an example by Tartar).

When we impose differential constraints on sequences $(u_j) \subset \Lrm^p(\R^d;\C^N)$ with $u_j \toweak 0$, we will do so in the usual way in compensated compactness theory~\cite{Mura78CPC,Mura79CPC2,Tart79CCAP}, i.e.\ we require that 
\begin{equation} \label{eq:Au_j_zero}
  \Acal u_j \to 0  \quad\text{in $\Wrm^{-s,p}(\R^d;\C^l)$.}
\end{equation}
Using the decomposition
\[
  \Acal = \Acal_b \circ (I-\Delta)^{s/2},
\]
we get
\begin{align*}
  \normb{\Acal_b u_j}_{p} &= \sup_{\norm{w}_{p'} \leq 1} \absb{\dprb{u_j, \Acal_b^* w}} \\
  &= \sup_{\norm{w}_{p'} \leq 1} \absb{\dprb{u_j, \Acal_b^* \circ (I-\Delta)^{s/2} \circ (I-\Delta)^{-s/2} w}}  \\
  &= \sup_{\norm{v}_{s,p'} \leq 1} \absb{\dprb{u_j, \Acal^* v}}
    = \normb{\Acal u_j}_{-s,p}.
\end{align*}
Hence,~\eqref{eq:Au_j_zero} is equivalent to
\begin{equation} \label{eq:Abu_j_zero}
  \Acal_b u_j \to 0  \quad\text{in $\Lrm^p(\R^d;\C^l)$.}
\end{equation}

We now prove that linear PDE constraints on a sequence are reflected in the generated MCF in a simple way:

\begin{theorem} \label{thm:diff_omega}
Let $(u_j) \subset \Lrm^p(\R^d;\C^N)$ generate $\omega \in \MCF^p(\R^d;\C^N)$, let $\Acal$ be a linear constant-coefficient PDE operator of order $s \in \N$ (not necessarily satisfying the constant-rank property), and
\[
  \Acal u_j \to 0  \qquad\text{in $\Wrm^{-s,p}(\R^d;\C^N)$.}
\]
Then, \enquote{$\Acal \omega = 0$} in the sense that
\begin{equation} \label{eq:Aomega0_ext}
  \ddprb{f \otimes \overline{\Psi \Abb_0}, \omega} = 0
\end{equation}
for all $f \in \Fbf^p(\Omega;\C^N)$ and all $\Psi \in \Crm^{\floor{d/2}+1}(\Sbb^{d-1};\C^{N \times l})$.
\end{theorem}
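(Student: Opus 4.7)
The plan is to reduce the assertion to the fact that $\Acal u_j \to 0$ in $\Wrm^{-s,p}$ is equivalent to $\Acal_b u_j \to 0$ strongly in $\Lrm^p$ (as established in the paragraph preceding the theorem), and then exploit this strong convergence via a symbol factorization inside the defining expression of the MCF.

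First, I would start from the representation
\[
  \ddprb{f \otimes \overline{\Psi \Abb_0}, \omega} = \lim_{R\to\infty} \lim_{j\to\infty} \int_{\R^d} h(\frarg,u_j) \cdot \overline{T_{(1-\eta_R)\Psi \Abb_0}[u_j]} \dd x,
\]
which is legitimate since, by $0$-homogeneity of $\Psi$ and of $\Abb_0(\xi/\abs{\xi})$ together with the $C^{\floor{d/2}+1}$ regularity on $\Sbb^{d-1}$, the product $\Psi \Abb_0 \in \Mcal(\Sbb^{d-1};\C^{N\times N})$.

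The key step is to factor the symbol. Write $\Abb_b(\xi) = m(\xi) \, \Abb_0(\xi/\abs{\xi})$ with the scalar factor $m(\xi) := (2\pi\abs{\xi})^s (1+4\pi^2\abs{\xi}^2)^{-s/2}$, which is smooth and strictly positive on $\R^d\setminus\{0\}$ and satisfies $m(\xi) \to 1$ as $\abs{\xi}\to\infty$. Then set
\[
  \phi_R(\xi) := \frac{(1-\eta_R(\xi))\Psi(\xi/\abs{\xi})}{m(\xi)},
\]
so that as symbols $(1-\eta_R(\xi))\Psi(\xi/\abs{\xi})\Abb_0(\xi/\abs{\xi}) = \phi_R(\xi) \, \Abb_b(\xi)$. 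Multiplication of symbols corresponds to composition of multiplier operators (valid on Schwartz functions and extended by density to $\Lrm^p$), which gives the crucial identity
\[
  T_{(1-\eta_R)\Psi \Abb_0}[u_j] = T_{\phi_R}[\Acal_b u_j].
\]

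Next, I would verify that $T_{\phi_R}$ is a bounded operator on $\Lrm^p$ for each fixed $R > 1$. On the set $\{\abs{\xi} \ge R\}$ where $\phi_R$ is supported, each of the three factors $1-\eta_R$, $\Psi(\xi/\abs{\xi})$, and $1/m(\xi) = (1+(2\pi\abs{\xi})^{-2})^{s/2}$ is smooth and satisfies Mihlin's condition (with constants depending on $R$); their product therefore satisfies~\eqref{eq:psi_multiplier_est}, and Mihlin's multiplier theorem yields a bounded extension $T_{\phi_R} \colon \Lrm^p \to \Lrm^p$.

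With the hypothesis $\Acal_b u_j \to 0$ in $\Lrm^p$, this immediately implies $T_{\phi_R}[\Acal_b u_j] \to 0$ in $\Lrm^p$ as $j\to\infty$, for every fixed $R$. Combining with the uniform bound $\norm{h(\frarg,u_j)}_{p'} \le C \norm{1+\abs{u_j}}_p^{p-1}$ (coming from~\eqref{eq:h_est}) and H\"{o}lder's inequality,
\[
  \lim_{j\to\infty} \int_{\R^d} h(\frarg,u_j) \cdot \overline{T_{(1-\eta_R)\Psi \Abb_0}[u_j]} \dd x = 0
  \qquad \text{for each fixed $R$.}
\]
Sending $R\to\infty$ yields $\ddpr{f \otimes \overline{\Psi\Abb_0},\omega} = 0$, which is~\eqref{eq:Aomega0_ext}.

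The only real technical point I foresee is verifying that $\phi_R$ is a legitimate Mihlin multiplier; once the support of $1-\eta_R$ is recognized as bounded away from the origin and the factor $1/m$ is written in the form $(1+(2\pi\abs{\xi})^{-2})^{s/2}$, the differentiation estimates reduce to routine Leibniz-rule computations with $R$-dependent constants, which is perfectly adequate since the limit in $j$ is taken for each $R$ fixed before sending $R\to\infty$.
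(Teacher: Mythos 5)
Your proof is correct and follows essentially the same route as the paper: both arguments hinge on the equivalence of $\Acal u_j \to 0$ in $\Wrm^{-s,p}$ with $\Acal_b u_j \to 0$ in $\Lrm^p$ and on rewriting the symbol $(1-\eta_R)\Psi\Abb_0$ so that $\Acal_b u_j$ appears inside the Fourier multiplier, after which H\"older's inequality and the uniform $\Lrm^{p/(p-1)}$-bound on $h(\frarg,u_j)$ give the vanishing of the double limit. The only difference is technical bookkeeping: you factor $(1-\eta_R)\Psi\Abb_0 = \phi_R\,\Abb_b$ exactly, with an $R$-dependent Mihlin multiplier $\phi_R = (1-\eta_R)\Psi/m$, whereas the paper replaces $\Abb_0$ by $\Abb_b$ up to an error $E(R)$ that vanishes as $R\to\infty$ because the two symbols agree asymptotically at high frequencies; both handle the discrepancy legitimately, since the $j$-limit is taken before the $R$-limit.
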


Here, $\Psi$ has a different signature, because $\Abb_0$ takes values in $\C^{l \times N}$. An $\omega$ satisfying~\eqref{eq:Aomega0_ext} is called \term{$\Acal$-free}. In the special case that $\Acal = \curl$, i.e.\
\begin{equation} \label{eq:A_curl}
  \Acal w = \bigl[ \partial_j w^i_k - \partial_k w^i_j \bigr]_{i,j,k},
  \qquad \text{$w \colon \R^d \to \R^{m \times d}$}
\end{equation}
($i = 1,\ldots,m$, $j,k = 1,\ldots,d$) we also call $\omega$ a \term{gradient MCF}.

\begin{proof}[Proof of Theorem~\ref{thm:diff_omega}]
Since $\Abb_b(\xi)$ and $\Abb_0(\xi)$ are asymptotically the same as $\abs{\xi} \to \infty$, we have
\begin{align*}
  \int h(\frarg,u_j) \cdot \overline{T_{(1-\eta_R) \Psi \Abb_0}[u_j]} \dd x
  &= \int h(\frarg,u_j) \cdot \overline{T_{(1-\eta_R) \Psi \Abb_b}[u_j]} \dd x + E(R) \norm{u_j}_p \\
  &= \int h(\frarg,u_j) \cdot \overline{T_{(1-\eta_R) \Psi}[\Acal_b u_j]} \dd x + E(R) \norm{u_j}_p
\end{align*}
and the error term $E(R)$ vanishes as $R \to \infty$. Using~\eqref{eq:Abu_j_zero}, the last expression converges to zero as $j \to \infty$ and $R \to \infty$ (in this order).
\end{proof}

\begin{remark} \label{rem:domain}
Our results are not directly applicable when $\Omega \neq \R^d$, because we employed the Fourier characterization of $\Acal$-freeness, which is only valid in the whole space. If we want to work in an arbitrary Lipschitz domain $\Omega \subset \R^d$, there are two options:
\begin{enumerate}
\item If $\Acal$ is a first-order operator, then under the additional assumption of the natural boundary condition
\[
  \qquad \Abb(n_\Omega) u|_{\partial \Omega} = 0,
  \quad\text{where $n_\Omega \colon \partial \Omega \to \Sbb^{d-1}$ is the unit outer normal,}
\]
the preceding result holds in $\Omega$ as well, because then $\Acal u = 0$ in $\Omega$ is equivalent to $\Acal u = 0$ in $\R^d$ (both in the $\Wrm^{-1,p}$-sense).

\item Assume that for all $u \in \Lrm^p(\Omega;\C^N)$ there exists an $\Lrm^p$-bounded extension $Eu \in \Lrm^p(\R^d;\C^N)$ such that $Eu|_\Omega = u$, $\norm{Eu}_{p,\R^d} \leq C\norm{u}_{p,\Omega}$, $Eu$ has compact support, and $\Acal (Eu) = 0$ in $\R^d$ whenever $\Acal u = 0$ in $\Omega$. Then it can be seen that the previous result continues to hold. For example, if $\Acal = \curl$, such an extension is available and can be constructed via the potential.
\end{enumerate}
\end{remark}

The following is a converse to Theorem~\ref{thm:diff_omega} in the $\Lrm^2$-case:

\begin{theorem} \label{thm:diff_genseq}
Let $\omega \in \MCF^2(\R^d;\C^N)$ be generated by a sequence $(u_j) \subset \Lrm^2(\R^d;\C^N)$ with $u_j \toweak u$ in $\Lrm^2(\R^d;\C^N)$. Furthermore, let $\Acal$ be a linear constant-coefficient PDE operator of order $s \in \N$ (not necessarily satisfying the constant-rank property) and assume that
\begin{equation} \label{eq:Aomega0_minimal}
  \ddprb{z \cdot q \otimes \overline{\Abb_0^* \Abb_0}, \omega} = 0   \quad \in \C^N,
\end{equation}
which is in particular implied by the condition~\eqref{eq:Aomega0_ext}, and also that $\Acal u = 0$ (in the $\Wrm^{-s,2}$-sense). Then,
\[
  \Acal u_j \to 0  \qquad\text{in $\Wrm^{-s,2}(\R^d;\C^l)$.}
\]
\end{theorem}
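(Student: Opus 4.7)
The plan is to invoke Lemma~\ref{lem:eliminate_Rlim} to collapse the $R$-limit in the MCF pairing; this will turn hypothesis~\eqref{eq:Aomega0_minimal} directly into the squared $\Lrm^2$-norm of $\Acal_0(u_j - u)$. First I would exploit $\Acal u = 0$: this forces $\Abb(\xi) \hat u(\xi) = 0$ for a.e.\ $\xi$, hence also $\Abb_0(\xi) \hat u(\xi) = 0$ for a.e.\ $\xi \neq 0$, so in particular $T_{\Abb_0^* \Abb_0}[u] = 0$ in $\Lrm^2$. Set $v_j := u_j - u$, so that $v_j \toweak 0$ in $\Lrm^2(\R^d;\C^N)$ and $\Acal u_j = \Acal v_j$.

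Applying Lemma~\ref{lem:eliminate_Rlim} with the integrand $f(x,z,q) = z \cdot q \in \Fbf^2(\R^d;\C^N)$ (for which $f \in \Fbf^2$ is easily checked, and $h^\infty(x,z) = z$) and the Hermitian symbol $\Psi := \Abb_0^* \Abb_0 \in \Mcal$ gives
\[
  \ddprb{z \cdot q \otimes \overline{\Abb_0^* \Abb_0}, \omega} = \lim_{j\to\infty} \int u_j \cdot \overline{T_{\Abb_0^*\Abb_0}[v_j]} \dd x = \lim_{j\to\infty} \int v_j \cdot \overline{T_{\Abb_0^*\Abb_0}[v_j]} \dd x,
\]
the discarded cross term being $\int u \cdot \overline{T_{\Abb_0^*\Abb_0}[v_j]} \dd x = \int T_{\Abb_0^*\Abb_0}[u] \cdot \overline{v_j} \dd x = 0$ by the $\Lrm^2$-self-adjointness of $T_{\Abb_0^*\Abb_0}$ together with $T_{\Abb_0^*\Abb_0}[u] = 0$. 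Plancherel's formula and the pointwise identity $a \cdot \overline{\Abb_0^*\Abb_0\, a} = \abs{\Abb_0 a}^2$ then yield
\[
  \int v_j \cdot \overline{T_{\Abb_0^*\Abb_0}[v_j]} \dd x = \int_{\R^d} \abs{\Abb_0(\xi)\, \hat v_j(\xi)}^2 \dd \xi = \norm{\Acal_0 v_j}_2^2,
\]
so assumption~\eqref{eq:Aomega0_minimal} becomes $\lim_j \norm{\Acal_0 v_j}_2^2 = 0$, i.e.\ $\Acal_0 v_j \to 0$ in $\Lrm^2(\R^d;\C^l)$.

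Finally, the symbol identity $\Abb_b(\xi) = \alpha(\xi)\, \Abb_0(\xi)$ with the scalar factor
\[
  \alpha(\xi) := \frac{(2\pi\abs{\xi})^s}{(1+4\pi^2\abs{\xi}^2)^{s/2}} \in [0,1]
\]
gives $\norm{\Acal_b v_j}_2^2 = \int \alpha(\xi)^2 \abs{\Abb_0 \hat v_j}^2 \dd \xi \leq \norm{\Acal_0 v_j}_2^2 \to 0$, so $\Acal_b v_j \to 0$ in $\Lrm^2$. Using the equivalence $\norm{\Acal w}_{-s,2} = \norm{\Acal_b w}_2$ recorded just before the theorem, together with $\Acal u_j = \Acal v_j$, the desired conclusion $\Acal u_j \to 0$ in $\Wrm^{-s,2}(\R^d;\C^l)$ follows.

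The main step that one should double-check is the applicability of Lemma~\ref{lem:eliminate_Rlim} in the full-space setting $\R^d$, since it is stated there for a bounded domain. However, its proof uses only the Band-limiting Lemma~\ref{lem:bandlim_compact} and the weak convergence $u_j \toweak u$, both of which remain at our disposal on $\R^d$; alternatively, one can first test against an arbitrary cut-off $\phi \in \Crm_c^\infty(\R^d)$ via Lemma~\ref{lem:phi_exchange}, carry out the above argument on the compactly supported sequence $\phi u_j$, and exhaust $\R^d$ by such $\phi$, using the Commutation Lemma~\ref{lem:commutation} to absorb the order-$(-1)$ commutator $[\phi,\Acal_b]$ in a negligible error.
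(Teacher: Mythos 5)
Your proof is correct. It reaches the same underlying identity as the paper — Plancherel turns the MCF pairing with $\Abb_0^*\Abb_0$ into a squared $\Lrm^2$-norm — but organizes the $R$-limit differently. The paper starts from $\norm{\Acal_b u_j}_2^2 = \int u_j \cdot \overline{T_{\Abb_b^*\Abb_b}[u_j]}\,\di x$, splits the symbol into $(1-\eta_R)\Abb_b^*\Abb_b + \eta_R\Abb_b^*\Abb_b$, and then sends $j\to\infty$, $R\to\infty$ with Lemmas~\ref{lem:bandlim_compact} and~\ref{lem:TetaR_conv}; this produces $\ddprb{z\cdot q\otimes\overline{\Abb_0^*\Abb_0},\omega} + \norm{\Acal_b u}_2^2$, with the passage from $\Abb_b^*\Abb_b$ to $\Abb_0^*\Abb_0$ handled by the same asymptotic-symbol argument as in Theorem~\ref{thm:diff_omega}. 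You instead invoke Lemma~\ref{lem:eliminate_Rlim} directly, which collapses the double limit at once and works with the homogeneous symbol $\Abb_0^*\Abb_0$ from the start, so you never need to swap $\Abb_b$ for $\Abb_0$ inside a limit. You also substitute $v_j = u_j - u$ and kill the cross term pointwise (via self-adjointness of $T_{\Abb_0^*\Abb_0}$ and $T_{\Abb_0^*\Abb_0}[u]=0$) rather than only in the limit, and you use the elementary bound $\alpha(\xi)\le 1$ to pass from $\Acal_0$ to $\Acal_b$. The result is a more streamlined, self-contained argument with the same ingredients. Your final caveat about Lemma~\ref{lem:eliminate_Rlim} on $\Omega=\R^d$ is well-placed but not a real obstruction — the paper applies the same bounded-domain machinery tacitly on $\R^d$ throughout Section~\ref{sc:diff_constr}, and either of your two sketched fixes works.
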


\begin{remark}
Before we come to the proof, a few remarks are in order:
\begin{enumerate}
  \item The fact that $\omega$ is generated by \emph{some} sequence is a genuine assumption, see Example~\ref{ex:MCF_nogen} below.
  \item The simplicity of the expression of PDE-constraints should be compared to the situation for (generalized) Young measures, where the characterization problem is in general a difficult one. For classical Young measures, this is the objective of the celebrated Kinderlehrer--Pedregal Theorem~\cite{KinPed91CYMG,KinPed94GYMG,FonMul99AQLS}; the recent extension to generalized Young measures can be found in~\cite{KriRin10CGGY,Rind14LPCY}.
  \item Remarkably, $\Acal u_j \to 0$ in $\Wrm^{-s,2}$ holds for \emph{all} generating sequences. This is a much stronger conclusion than what can be obtained in Young measure theory, where for example a gradient constraint on the Young measure in the sense of the aforementioned Kinderlehrer--Pedregal Theorem is far from implying that all generating sequences (asymptotically) consist only of gradients. In fact, generically there also exist divergence-free generating sequences for gradient Young measures, and divergence-free sequences are in a sense \enquote{orthogonal} to sequences of gradients.
\end{enumerate}
\end{remark}

\begin{proof}[Proof of Theorem~\ref{thm:diff_genseq}]
The conclusion is equivalent to $\Acal_b u_j \to 0$ in $\Lrm^2(\R^d;\C^l)$, and so we compute using Parseval's Theorem,
\begin{align*}
  \int \abs{\Acal_b u_j}^2 \dd x &= \int \bigl( \Abb_b \hat{u}_j \bigr) \cdot \overline{\bigl( \Abb_b \hat{u}_j \bigr)} \dd \xi
    = \int \hat{u}_j \cdot \overline{\bigl( \Abb_b^* \Abb_b \hat{u}_j \bigr)} \dd \xi \\
  &= \int u_j \cdot \overline{T_{(1-\eta_R) \Abb_b^* \Abb_b}[u_j]} \dd x + \int u_j \cdot \overline{T_{\eta_R \Abb_b^* \Abb_b}[u_j]} \dd x.
\end{align*}
Now let first $j \to \infty$ and then $R \to \infty$ to arrive via Lemmas~\ref{lem:bandlim_compact},~\ref{lem:TetaR_conv} at
\[
  \lim_{j\to\infty} \int \abs{\Acal_b u_j}^2 \dd x
  = \ddprb{z \cdot q \otimes \Abb_0^* \Abb_0, \omega} + \int u \cdot \overline{T_{\Abb_b^* \Abb_b}[u]} \dd x.
\]
The first term is zero by~\eqref{eq:Aomega0_minimal} and the second term is equal to $\norm{\Acal_b u}_2$ and hence is also zero by assumption.
\end{proof}

\begin{remark} \label{rem:Afree_proj}
If $\Acal$ satisfies the constant-rank property~\eqref{eq:crp}, we can project the elements of a generating sequence $(u_j)$ with $\Acal u_j \to 0$ in $\Wrm^{-s,2}(\R^d;\C^N)$ onto the set of $\Acal$-free functions, see Lemma~2.14 in~\cite{FonMul99AQLS}, to obtain a generating sequence that is actually $\Acal$-free and not just asymptotically $\Acal$-free.
\end{remark}

We now give two examples for the preceding results:

\begin{example} \label{ex:osc_curl0}
Consider again the situation of Example~\ref{ex:osc}, but let now $A, B \in \R^{m \times d}$. As we saw in that example, the laminar oscillation between $A$ and $B$ in direction $n_0 \in \Sbb^{d-1}$ generates the MCF
\[
  \omega = \Lcal^d \restrict \Omega \otimes \bigl[ \theta \overline{(A-M)} \delta_A + (1-\theta) \overline{(B-M)} \delta_B \bigr] \otimes \overline{\delta}_{\pm n_0}.
\]
Take $\Acal = \curl$ as in~\eqref{eq:A_curl}. Then, condition~\eqref{eq:Aomega0_minimal} for $\omega$ translates into
\begin{align*}
  0 &= \sum_\pm \int_\Omega \theta A \overline{\Abb_0^*(\pm n_0) \Abb_0(\pm n_0) (A-M)} \\
    &\qquad\qquad + (1-\theta) B \overline{\Abb_0^*(\pm n_0) \Abb_0(\pm n_0) (B-M)} \dd x,
\end{align*}
where $\sum_\pm$ denotes the summation over both signs for $\pm n_0$. One can compute
\[
  \ker \Abb_0(\pm n_0) = \setb{ a \otimes n_0 }{ a \in \R^m },
\]
and hence condition~\eqref{eq:Aomega0_minimal} is true if
\begin{equation} \label{eq:curl_matrix_cond}
  A - M = a \otimes n_0  \qquad\text{and}\qquad
  B - M = b \otimes n_0
\end{equation}
for some $a,b \in \R^m$. On the other hand, from~\eqref{eq:Aomega0_ext} for $f(x,z,q) := (z-M) \cdot q$ and $\Psi(\xi) := \Abb_0^*(\xi)$, we get that $\Acal u_j \to 0$ in $\Wrm^{-1,p}$ implies
\begin{align*}
  0 &= \theta \sum_\pm \int_\Omega (A-M) \overline{\Abb_0^*(\pm n_0) \Abb_0(\pm n_0) (A-M)} \dd x \\
  &\qquad + (1-\theta) \sum_\pm \int_\Omega (B-M) \overline{\Abb_0^*(\pm n_0) \Abb_0(\pm n_0) (B-M)} \dd x \\
  &= \theta \sum_\pm \int_\Omega \abs{\Abb_0(\pm n_0) (A-M)}^2 \dd x \\
  &\qquad + (1-\theta) \sum_\pm \int_\Omega \abs{\Abb_0(\pm n_0) (B-M)}^2 \dd x,
\end{align*}
whence~\eqref{eq:curl_matrix_cond} holds. Thus, from Theorems~\ref{thm:diff_omega},~\ref{thm:diff_genseq} we conclude that $\omega$ is a gradient MCF if and only if
\[
  B - A = c \otimes n_0  \qquad\text{for some $c \in \R^m$.}
\]
As expected, this is a generalization of the usual Hadamard jump condition requiring rank-one connectedness across the jump surface (in this context also see~\cite{BalJam87FPMM}).
\end{example}

\begin{example}
Again considering the situation of Example~\ref{ex:osc}, but now with $A, B \in \R^d$ and $\Acal = \diverg$, we get by a similar derivation that $\diverg \omega = 0$ if and only if
\[
  (A - M) \perp n_0  \qquad\text{and}\qquad
  (B - M) \perp n_0.
\]
\end{example}

We end this section with an example showing that certain MCFs cannot be generated by \emph{any} sequence. This raises the problem of characterizing the subclass of MCFs that are generated by at least one sequence. We reserve this for future work.

\begin{example} \label{ex:MCF_nogen}
Let
\[
  a := \begin{pmatrix} 1 \\ 0 \end{pmatrix}, \qquad
  b := \begin{pmatrix} 0 \\ 1 \end{pmatrix}, \qquad
  A := a \otimes a,  \qquad
  B := b \otimes b,
\]
and consider the MCF
\[
  \omega = \Lcal^d \restrict \Omega \otimes \biggl[ \frac{1}{2} A \delta_A \otimes \overline{\delta}_{\pm a} + \frac{1}{2} B \delta_B \otimes \overline{\delta}_{\pm b} \biggr].
\]
A quick way to see that this $\omega$ is not generated by any sequence is as follows: Theorem~\ref{thm:diff_genseq} implies that any generating sequence asymptotically would have to be a sequence of gradients. However, the corresponding Young measure is $\frac{1}{2} \delta_A + \frac{1}{2} \delta_B$, which is not a gradient Young measure. This follows for example from the Kinderlehrer--Pedregal theorem~\cite{KinPed91CYMG,KinPed94GYMG}, because $\rank \, (A-B) = 2$.
\end{example}

\subsection{Compactness wavefront set} \label{ssc:WF}

In this section we introduce a notion of wavefront set of microlocal compactness forms $\omega \in \MCF^p(\Omega;\C^N)$: Recall the sphere compactification $\sigma\C^N$ from Section~\ref{ssc:sphere_compact} and also the extension construction for $Eh$ in~\eqref{eq:Eh}. The \term{($p$-compactness) wavefront set} $\WF(\omega) \subset \cl{\Omega} \times \sigma\C^N \times \Sbb^{d-1}$ is the smallest closed subset $A$ of $\cl{\Omega} \times \sigma\C^N \times \Sbb^{d-1}$ with the property that for any $f(x,z,q) = \phi(x) g(z) \cdot q \in \Fbf^p(\Omega;\C^N)$ with $\phi \in \Crm(\cl{\Omega})$, $g \in \Crm(\C^N;\C^N)$ ($S^{p-1}g \in \Crm(\cl{\C\B^N};\C^N)$), and any $\Psi \in \Mcal$,
\[
  \supp \, \phi \otimes Eg(z) \otimes \overline{\Psi} \subset A^c
  \qquad\text{implies}\qquad
  \ddprb{f \otimes \overline{\Psi}, \omega} = 0.
\]
It can be seen that if $\WF(\omega) = \emptyset$, then $\omega = 0$. Since $\omega$ can also be seen as a distribution, see Section~\ref{ssc:ext_repr_distrib}, it is worth pointing out that the wavefront set considered here is different from the one considered by H\"{o}rmander, cf.\ Chapter~VIII of~\cite{Horm90ALPD1}, applied to this distribution. Another notion with related aims, the \enquote{$\Lrm^2$-compactness wavefront set}, was introduced by G\'{e}rard in~\cite{Gera88CCRD}.

\begin{example}
The oscillatory MCF
\[
  \omega = \Lcal^d \restrict \Omega \otimes \bigl[ \theta \overline{(A-M)} \delta_A + (1-\theta) \overline{(B-M)} \delta_B \bigr] \otimes \overline{\delta}_{\pm n_0}
\]
from Example~\ref{ex:osc} has the wavefront set
\[
  \WF(\omega) = \cl{\Omega} \times \{A,B\} \times \{+n_0,-n_0\}.
\]
\end{example}

\begin{example}
A concentration effect is expressed in the MCF
\[
  \omega = \delta_0 \otimes  \overline{Z_0} \delta_{\infty Z_0} \otimes \bar{\mu}
  \quad\in \MCF^p(\Omega;\C^N),
\]
from Example~\ref{ex:conc}. For its wavefront set we get
\[
  \WF(\omega) = (0,\infty \overline{Z_0}) \times \supp \bar{\mu},
\]
where the support of $\bar{\mu}$, which is given in~\eqref{eq:conc_meas}, depends on the cancellation properties of the profile function $w$'s Fourier transform.
\end{example}

The following lemma describes the effect of pointwise constraints on the wavefront set:

\begin{lemma} \label{lem:WF_pointwise_constr}
Let $(u_j) \subset \Lrm^p(\Omega;\C^N)$ generate $\omega \in \MCF^p(\Omega;\C^N)$ and let $Z(x) \subset \sigma\C^N$ for all $x \in \Omega$ satisfy the \emph{continuity property}
\[
  Z := \setb{ (x,z) \in \cl{\Omega} \times \sigma\C^N }{ z \in Z(x) }
  \qquad\text{is closed in $\cl{\Omega} \times \sigma\C^N$.}
\]
If the pointwise constraint
\[
  u_j(x) \in Z(x)  \qquad \text{for a.e.\ $x \in \Omega$, $j \in \N$,}
\]
holds, then
\[
  \WF(\omega) \subset Z \times \Sbb^{d-1} = \setb{ (x,z,\xi) \in \cl{\Omega} \times \sigma\C^N \times \Sbb^{d-1} }{ z \in Z(x) }.
\]
\end{lemma}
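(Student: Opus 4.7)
The plan is to verify that $Z \times \Sbb^{d-1}$ itself satisfies the defining vanishing property of the wavefront set; since $\WF(\omega)$ is by definition the smallest closed subset of $\cl{\Omega} \times \sigma\C^N \times \Sbb^{d-1}$ with that property, the desired inclusion then follows immediately. As a preliminary, I would note that $Z \times \Sbb^{d-1}$ is indeed closed in $\cl{\Omega} \times \sigma\C^N \times \Sbb^{d-1}$, being the product of two closed sets ($Z$ by the continuity hypothesis on the assignment $x \mapsto Z(x)$, and $\Sbb^{d-1}$ trivially), so it is a legitimate candidate.

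Next, I would fix an admissible test function $f(x,z,q) = \phi(x) g(z) \cdot q \in \Fbf^p(\Omega;\C^N)$ (with $\phi \in \Crm(\cl{\Omega})$, $g \in \Crm(\C^N;\C^N)$ such that $Eg \in \Crm(\cl{\Omega} \times \sigma\C^N;\C^N)$) and $\Psi \in \Mcal$ for which $\supp(\phi \otimes Eg \otimes \overline{\Psi}) \subset (Z \times \Sbb^{d-1})^c$. Unpacking this and observing that the third (Fourier) factor ranges over all of $\Sbb^{d-1}$, the support condition is equivalent to $\phi(x)\, Eg(z) = 0$ for every $(x,z) \in Z$. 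Combining this with the pointwise assumption $u_j(x) \in Z(x)$, so that $(x, u_j(x)) \in Z$ for a.e.\ $x \in \Omega$ (interpreting the finite value $u_j(x) \in \C^N$ as an element of $\sigma\C^N$), gives $\phi(x)\, Eg(u_j(x)) = 0$ for a.e.\ $x$. Since $Eg(z) = (1+\abs{z})^{-(p-1)} g(z)$ on the finite stratum $\C^N$ with a nonvanishing weight, this is equivalent to $\phi(x) g(u_j(x)) = 0$ a.e.

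To conclude, I would invoke the representation formula of Theorem~\ref{thm:omega}:
\[
  \ddprb{f \otimes \overline{\Psi}, \omega} = \lim_{R\to\infty}\lim_{j\to\infty} \int_\Omega \phi(x) g(u_j(x)) \cdot \overline{T_{(1-\eta_R)\Psi}[u_j]} \dd x = 0,
\]
since the integrand vanishes pointwise a.e.\ by the previous step. This verifies the defining vanishing property for $Z \times \Sbb^{d-1}$ and completes the proof. I do not foresee any serious technical obstacle; the only conceptual subtlety is that the hypothesis controls $u_j$ only at its finite values, yet this suffices, because the compactification $\sigma\C^N$ and the extension $Eg$ enter the definition of $\WF(\omega)$ only to parametrize test functions on the concentration stratum $\infty\partial\C\B^N$. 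The closedness of $Z$ in the compactified product is used exclusively to ensure that $Z \times \Sbb^{d-1}$ qualifies as a closed candidate, and it is precisely this closedness that allows the argument to account consistently for concentration effects through the continuity of $Eg$ up to the boundary of $\sigma\C^N$.
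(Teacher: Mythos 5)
Your proposal is correct and follows essentially the same route as the paper: test with $f(x,z,q)=\phi(x)g(z)\cdot q$ and $\Psi$ whose tensor product is supported in $Z^c\times\Sbb^{d-1}$, deduce from $u_j(x)\in Z(x)$ that $\phi\, g(u_j)\equiv 0$ a.e., and conclude $\ddprb{f\otimes\overline{\Psi},\omega}=0$ directly from the defining representation, so that $Z\times\Sbb^{d-1}$ is an admissible closed set and the minimality in the definition of $\WF(\omega)$ gives the inclusion. Your extra remarks (closedness of the candidate set, passing from $Eg$ to $g$ via the nonvanishing weight on the finite stratum) only make explicit what the paper leaves implicit.
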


\begin{proof}
Let $f(x,z,q) = \phi(x) g(z) \cdot q \in \Fbf^p(\Omega;\C^N)$ and $\Psi \in \Mcal$ be such that
\[
  \supp \, \phi \otimes Eg \otimes \overline{\Psi} \subset \setb{ (x,z,\xi) \in \cl{\Omega} \times \sigma\C^N \times \Sbb^{d-1} }{ z \notin Z(x) } = Z^c \times \Sbb^{d-1}.
\]
Then,
\[
  \ddprb{f \otimes \overline{\Psi}, \omega} = \lim_{R\to\infty} \lim_{j\to\infty} \int_\Omega \phi \, g(u_j) \cdot \overline{T_{(1-\eta_R)\Psi}[u_j]} \dd x = 0,
\]
because $\phi g(u_j) \equiv 0$.
\end{proof}

The interplay between pointwise and differential constraints, which is related to compensated compactness theory, is more complicated and will be treated in the next section.

\begin{remark}[Comparison to classical wave front set]
Our notion of the $p$-compactness wavefront set is named in analogy to the classical notion from microlocal analysis, as introduced by H\"{o}rmander, see for example in Chapter~VIII of~\cite{Horm90ALPD1}. However, a $p$-MCF measures the difference between weak and strong $\Lrm^p$-compactness and not the directions where $\Crm^\infty$-regularity fails, like the classical wavefront set. Of course, our $p$-compactness wavefront set contains more information than a mere analogue of the classical wavefront set, because the asymptotic value distribution of the generating sequence is also preserved. We remark in passing that in classical microlocal analysis there is no object corresponding to the full MCF and the quantitative information it contains.
\end{remark}

\subsection{Compensated compactness} \label{ssc:comp_compact}

The general philosophy of compensated compactness is to employ additional properties of sequences to improve weak to strong compactness. We here take a slightly broader view and also consider mere restrictions on the direction and value distribution of admissible oscillations and concentrations as \enquote{compensated compactness}. We will focus on the interplay between differential and pointwise constraints and how it affects the wavefront set (and thus compactness).

Let $\Acal$ be a linear constant-coefficient PDE operator of order one as before. From now on we always assume the constant-rank property~\eqref{eq:crp}. Our basic setup follows the general framework of compensated compactness as introduced by Tartar~\cite{Tart79CCAP,Tart83CCMA}: Let $(u_j) \subset \Lrm^p(\Omega;\C^N)$ be a sequence satisfying the \term{differential inclusion}
\[
  \left\{ \begin{aligned}
    \Acal u_j &= 0     \qquad &&\text{in $\Omega$,} \\
    u_j(x) &\in Z(x)          &&\text{for a.e.\ $x \in \Omega$,}
      \qquad\text{$Z \subset \cl{\Omega} \times \sigma \C^N$ closed.}
  \end{aligned} \right.
\]
It turns out that this formulation is very useful for a variety of problems, for instance in the theory of conservation laws~\cite{Tart79CCAP,Tart83CCMA}, rigidity theory~\cite{Kirc03RGM,Rind12LSYM,Rind11LSIF}, and fluid mechanics, see~\cite{DeLSze09EEDI}, where the Euler equation is written in this form (a related reference investigating Young-measure solutions to the Euler equation is~\cite{SzeWie12YMGI}).

The aim now is to investigate how we can use MCFs to efficiently exploit the conditions above in our study of weak--strong compactness. Having already considered pointwise and differential constraints in isolation, we now aim to analyze the interaction between them. The following theorem is a general compensated compactness result:

\begin{theorem}[Compensated Compactness] \label{thm:cc_general}
Let $(u_j) \subset \Lrm^p(\R^d;\C^N)$ generate the microlocal compactness form $\omega \in \MCF^p(\R^d;\C^N)$ and let $\Acal$ be a constant-coefficient PDE operator of order one satisfying the constant-rank property. We furthermore assume:
\begin{itemize}
\item[(i)] $\Acal u_j \to 0$ in $\Wrm^{-1,p}(\R^d;\C^l)$.
\item[(ii)] Let $Z(x) \subset \sigma \C^N$ for all $x \in \cl{\Omega}$ with the property that the orthogonal projection $\Gbb(x) \in \C^{N \times N}$ onto $\spn_\C Z(x)$ is smooth in $x$.
\item[(iii)] $\norm{u_j - \Gbb u_j}_p \to 0$ as $j \to \infty$.
\end{itemize}
Then,
\[
  \WF(\omega) \subset \Xi := \setb{ (x,z,\xi) \in \R^d \times \sigma\C^N \times \Sbb^{d-1} }{ \spn_\C Z(x) \cap \ker \Abb_0(\xi) \neq \{0\} }.
\]
In particular, if $\Xi = \emptyset$, then $\omega = 0$ and $(u_j)$ is strongly compact.
\end{theorem}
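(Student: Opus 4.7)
The goal is to show $\WF(\omega) \subset \Xi$; the final compactness conclusion then follows at once, since $\Xi = \emptyset$ gives $\WF(\omega) = \emptyset$, hence $\omega = 0$, and Lemma~\ref{lem:omega0_strong_conv} (applied to a weakly convergent subsequence of $(u_j)$) upgrades this to strong compactness in $\Lrm^p$. To prove the wavefront-set inclusion, I will show that for every $(x_0, z_0, \xi_0) \notin \Xi$ one has $\ddprb{\phi g \otimes \overline{\chi I}, \omega} = 0$ whenever $\phi \in \Crm^\infty_c$, $g$, $\chi \in \Crm^\infty(\Sbb^{d-1})$ are supported in suitably small neighborhoods of $x_0$, $z_0$, $\xi_0$ respectively and are nonzero there.

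The first step is to build a smooth partial inverse. The assumption $(x_0, z_0, \xi_0) \notin \Xi$ reads precisely as $\Abb_0(\xi_0)$ being injective on $\ran \Gbb(x_0) = \spn_{\C} Z(x_0)$. Smoothness of $\Gbb$ from~(ii), smoothness of $\Abb_0$ on $\Sbb^{d-1}$, and the constant-rank hypothesis (which ensures $\ker \Abb_0(\xi)$ varies smoothly in $\xi$) together propagate this injectivity to a full neighborhood $U \times V \subset \R^d \times \Sbb^{d-1}$ of $(x_0, \xi_0)$. Using a Moore--Penrose-type construction restricted to the constant-rank range, I will produce a smooth symbol $\Psi(x,\xi) \in \Crm^\infty(U \times V; \C^{N \times l})$ satisfying
\[
  \Psi(x,\xi) \Abb_0(\xi) \Gbb(x) = \Gbb(x) \qquad \text{on } U \times V,
\]
and extend $\Psi$ smoothly to $\R^d \times \Sbb^{d-1}$ while preserving the factorization on a slightly smaller neighborhood.

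The second step rewrites the pairing using assumptions~(i) and~(iii). In the defining double limit
\[
  \ddprb{\phi g \otimes \overline{\chi I}, \omega} = \lim_{R \to \infty} \lim_{j \to \infty} \int \phi \, g(u_j) \cdot \overline{T_{(1-\eta_R)\chi}[u_j]} \dd x,
\]
assumption~(iii) allows replacing $u_j$ by $\Gbb u_j$ inside the Fourier multiplier with an error vanishing in $\Lrm^p$ uniformly in $R$, while $\phi g(u_j)$ remains bounded in $\Lrm^{p/(p-1)}$. The Commutation Lemma~\ref{lem:commutation}, applied to the smooth matrix multiplication $M_{\Gbb}$, then yields compact remainders that die off in the $j$-limit on the weakly convergent $(u_j)$. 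Interpreting the outcome through the compound-symbol framework of Section~\ref{ssc:ext_repr_distrib}, I identify the pairing with
\[
  \ddprb{\phi g \otimes \overline{\chi(\xi) \Gbb(x)}, \omega} = \ddprb{\phi g \otimes \overline{\chi(\xi)\Psi(x,\xi) \Abb_0(\xi) \Gbb(x)}, \omega},
\]
the last equality following from Step~1 on the supports of $\phi$ and $\chi$. Because $\Abb_0$ appears as a right factor, Theorem~\ref{thm:diff_omega} (in its compound-symbol version) together with~(i) forces the pairing to vanish. Locality (Lemma~\ref{lem:locality}) and the definition of the wavefront set then give $\WF(\omega) \subset \Xi$.

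The main obstacle will be the middle step: converting the $\Lrm^p$-level information $u_j \approx \Gbb u_j$ provided by~(iii) into a genuine factorization at the symbol level requires two successive rounds of commutator estimates intertwining $M_{\Gbb}$ with $T_{(1-\eta_R)\chi}$, and one must verify that every error term decays through the iterated limit $R \to \infty$, $j \to \infty$. The constant-rank hypothesis is indispensable for the smooth construction of $\Psi(x,\xi)$ in Step~1; without it, the partial inverse could fail to be continuous, let alone admissible as an MCF multiplier.
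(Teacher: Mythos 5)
Your proposal is correct in spirit and leads to the stated conclusion, but it organizes the key algebra differently from the paper. The paper constructs the \emph{non-orthogonal projection} $\Hbb(x,\xi)$ with $\img\Hbb = \ker\Abb_0(\xi)$ and $\ker\Hbb = \spn_\C Z(x)$, together with a map $\Jbb$ realizing the factorization $\rho\Psi(I-\Hbb) = \Jbb\Abb_0$; the symbol $\Hbb$ is then inserted using the differential constraint, and the proof terminates on the identity $\Hbb\Gbb = 0$. You instead construct a \emph{partial left inverse} $\Psi(x,\xi)$ of $\Abb_0(\xi)$ on $\ran\Gbb(x)$ (essentially $(\Abb_0\Gbb)^+$), giving $\Psi\Abb_0\Gbb = \Gbb$, first push $u_j$ onto $\Gbb u_j$ via assumption~(iii), and only then factor out $\Abb_0$. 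These are dual routes through the same linear-algebraic fact $\spn_\C Z(x)\cap\ker\Abb_0(\xi)=\{0\}$: writing $\Gbb = (I-\Hbb)\Gbb$ and applying the paper's $\Jbb\Abb_0$-factorization recovers exactly your identity $\Gbb = (\text{left inverse})\Abb_0\Gbb$ up to the cut-off. What the paper's route buys is a cleaner termination ($\Hbb\Gbb = 0$ is an exact algebraic kill, not a limit), whereas yours buys a more elementary auxiliary symbol (a pseudoinverse rather than a skew projection), at the cost of needing the differential constraint at the very end. One imprecision worth flagging: after rewriting the symbol as $\chi\Psi\Abb_0\Gbb$, your final sentence invokes only~(i). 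But the pseudodifferential operator with this symbol applied to $u_j$ effectively computes $T_{\chi\Psi}[\Acal_b(\Gbb u_j)]$ modulo commutators, and $\Acal_b(\Gbb u_j)\to 0$ genuinely requires \emph{both}~(i) (for $\Acal_b u_j$) and~(iii) (for $\Acal_b(\Gbb u_j - u_j)$); so~(iii) enters twice, once to project $u_j$ and once to transfer the constraint from $u_j$ to $\Gbb u_j$. With that correction, and once the commutator/compound-symbol estimates in your acknowledged ``main obstacle'' step are written out (they are exactly the $E_1,\ldots,E_4$ machinery of the paper and need the constant-rank hypothesis for the smoothness of $\Psi$), the argument closes.
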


A useful special case is:

\begin{corollary} \label{cor:cc_simple}
Let $(u_j) \subset \Lrm^p(\R^d;\C^N)$ generate $\omega \in \MCF^p(\R^d;\C^N)$ and let $\Acal$ be as in Theorem~\ref{thm:cc_general}. Moreover, assume
\[
  \left\{ \begin{aligned}
    \Acal u_j &= 0     \qquad &&\text{in $\Wrm^{-1,p}(\R^d;\C^l)$,} \\
    u_j(x) &\in Z             &&\text{for a.e.\ $x \in \R^d$,}
  \end{aligned} \right.
\]
where $Z \subset \sigma\C^N$ is closed. Then,
\[
  \WF(\omega) \subset \Xi := \setb{ (x,z,\xi) \in \R^d \times \sigma\C^N \times \Sbb^{d-1} }{ \spn_\C Z \cap \ker \Abb_0(\xi) \neq \{0\} }.
\]
In particular, if $\Xi = \emptyset$, then $\omega = 0$ and $(u_j)$ is strongly compact.
\end{corollary}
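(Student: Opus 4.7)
The plan is to deduce this corollary directly from Theorem~\ref{thm:cc_general} by verifying its three hypotheses with the constant choice $Z(x) := Z$. Hypothesis~(i) is given in the corollary's assumptions. For~(ii), since $Z(x) = Z$ does not vary with $x$, the orthogonal projection $\Gbb(x) = \Gbb$ onto $\spn_\C Z$ is a constant $(N \times N)$-matrix, so smoothness in $x$ is trivial.

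The key observation is that hypothesis~(iii) is actually automatic and holds with norm equal to zero. Indeed, since $u_j$ is an $\Lrm^p$-function, $u_j(x) \in \C^N$ almost everywhere, so the pointwise constraint $u_j(x) \in Z$ gives $u_j(x) \in Z \cap \C^N \subset \spn_\C Z$ for a.e.\ $x$. Consequently $\Gbb u_j = u_j$ pointwise a.e., so $\norm{u_j - \Gbb u_j}_p = 0$ for every $j$. Applying Theorem~\ref{thm:cc_general} then immediately yields the asserted inclusion
\[
  \WF(\omega) \subset \setb{ (x,z,\xi) \in \R^d \times \sigma\C^N \times \Sbb^{d-1} }{ \spn_\C Z \cap \ker \Abb_0(\xi) \neq \{0\} } = \Xi.
\]

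For the final assertion, suppose $\Xi = \emptyset$. Then $\WF(\omega) = \emptyset$, which, by the remark in Section~\ref{ssc:WF} immediately following the definition of the wavefront set, implies $\omega = 0$. To conclude strong compactness, extract from the $\Lrm^p$-bounded sequence $(u_j)$ a weakly convergent subsequence $u_j \toweak u$ in $\Lrm^p(\R^d;\C^N)$. Since this subsequence still generates $\omega = 0$ (generation is inherited on subsequences), Lemma~\ref{lem:omega0_strong_conv} yields $u_j \to u$ strongly in $\Lrm^p$. As the argument applies to any weakly convergent subsequence of the original sequence and all yield the same limit characterization, standard subsequence reasoning gives strong compactness of the full sequence $(u_j)$.

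There is no substantial obstacle to overcome here: essentially all the work is already carried out in Theorem~\ref{thm:cc_general} (which in turn relies on Theorem~\ref{thm:diff_omega} and the structure of the constant-rank symbol), together with the characterization of $\omega = 0$ via strong convergence from Lemma~\ref{lem:omega0_strong_conv}. The corollary is simply the specialization of the general framework to the case where the target constraint set does not depend on the spatial variable, and hence the projection step (iii) of the main theorem is rendered trivial.
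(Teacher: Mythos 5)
Your proof is correct and follows what is essentially the only sensible route: the paper presents the corollary as a direct specialization of Theorem~\ref{thm:cc_general} without an explicit argument, and you supply exactly the verification the paper leaves implicit. You correctly note that hypothesis~(ii) is trivial because the constant map $x \mapsto Z$ yields a constant projection $\Gbb$, and that hypothesis~(iii) holds with equality to zero since $u_j(x) \in Z \cap \C^N \subset \spn_\C Z$ a.e.\ forces $\Gbb u_j = u_j$; the passage from $\WF(\omega) = \emptyset$ to $\omega = 0$ and then, via Lemma~\ref{lem:omega0_strong_conv} applied along weakly convergent subsequences, to strong relative compactness is the standard subsequence argument and is also correct.
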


\begin{remark}
One can also interpret this result as a statement about \enquote{ellipticity}: The directions $\xi$ that do not feature in $\WF(\omega)$ are the directions for which the system \enquote{$\Acal \omega = 0$, $\omega \in Z$} is elliptic, cf.~\cite{Mull99VMMP} for more on this kind of ellipticity.
\end{remark}

\begin{remark}[Necessity of constant-rank condition]
The reason why we require the constant-rank property is that we need to ensure that projections onto $\ker \Abb_0(\xi)$ are continuous, which necessitates that this space has constant dimension as a function of $\xi$. Not much is known in the situation where the constant-rank property is violated, but see~\cite{Mull99ROCI} for a situation in which the constant-rank property is violated, but one can still obtain a result.
\end{remark}

\begin{proof}[Proof of Theorem~\ref{thm:cc_general}]
For all $(x,\xi) \in \R^d \times \Sbb^{d-1}$ with $\spn_\C Z(x,\xi) \cap \ker \Abb_0(\xi) = \{0\}$ we denote by $\Hbb(x,\xi) \colon \C^N \to \C^N$ the non-orthogonal projection with
\[
  \img \Hbb(x,\xi) = \ker \Abb_0(\xi)  \qquad\text{and}\qquad
  \ker \Hbb(x,\xi) = \spn_\C Z(x).
\]
Using some linear algebra in conjunction with (ii), from which it immediately follows that $\rank \spn_\C Z(x) = \mathrm{const}$, and also employing the constant-rank property of $\Acal$, one shows
\[
  \Xi^c = \setb{ (x,z,\xi) \in \R^d \times \sigma\C^N \times \Sbb^{d-1} }{ \norm{\Hbb(x,\xi)} < \infty }.
\]
Moreover, $\Hbb$ is smooth in $\Xi^c$ and positively $0$-homogeneous in $\xi$ (since $\Abb_0$ is).

Let $f(x,z,q) = \phi(x) g(z) \cdot q \in \Fbf^p(\R^d;\C^N)$ and $\Psi \in \Mcal$ with $\supp \, \phi \otimes Eg \otimes \overline{\Psi} \subset \Xi^c_K$ for some $K > 0$, where
\begin{align*}
  \Xi^c_K := \setb{ (x,z,\xi) \in \Xi^c }{ &\norm{\partial^\beta_x \partial^\alpha_\xi \Hbb(x,\xi)} \leq K \text{ for all $\alpha,\beta \in \N_0^d$}\\
  &\text{with $\abs{\alpha}, \abs{\beta} \leq d$} }.
\end{align*}
We will show in the sequel that
\[
  \ddpr{f \otimes \overline{\Psi}, \omega} = 0,
\]
proving the theorem.

Take a smooth cut-off function $\rho \in \Crm^\infty(\R^d \times \Sbb^{d-1}, [0,1])$ such that $\norm{\partial^\beta_x \partial^\alpha_\xi \Hbb(x,\xi)} \leq 2K$ for all $(x,\xi) \in \supp \rho$ and all $\alpha,\beta \in \N_0^d$ with $\abs{\alpha}, \abs{\beta} \leq d$, as well as $\rho(x,\xi) = 1$ whenever $(x,z,\xi) \in \Xi^c_K$ for any $z \in \sigma\C^N$. We consider $\rho$ to be extended to all $\xi \in \R^d \setminus \{0\}$ by positive $0$-homogeneity. Let $T_{\rho\Psi\Hbb}$ be the pseudodifferential operator with \emph{compound} symbol $(x,y,\xi) \mapsto \rho(y,\xi)\Psi(\xi)\Hbb(y,\xi)$, i.e.\ for $u \in \Scal(\R^d)$,
\[
  T_{\rho\Psi\Hbb}[u](x) = \int \!\! \int \rho(y,\xi)\Psi(\xi)\Hbb(y,\xi) u(y) \, \ee^{2\pi(x-y)\cdot\xi} \dd y \dd \xi.
\]
Then, our assumptions imply that $T_{\rho\Psi\Hbb}$ extends to a bounded operator from $\Lrm^p$ to itself, see Chapter~VI of~\cite{Stei93HA} (in particular Section VI.6, which entails that $T_{\rho\Psi\Hbb}$ is also a classical pseudodifferential operator).

We further assume without loss of generality that $u_j \toweak u$ in $\Lrm^p(\R^d;\C^N)$. Then,
\begin{align}
  &\int \phi g(u_j) \cdot \overline{T_{(1-\eta_R)\Psi}[u_j]} \dd x  \notag\\
  &\qquad = \int g(u_j) \cdot \overline{T_{(1-\eta_R)\Psi}[\phi u_j]} \dd x
    + E_1(j,R)  \notag\\
  &\qquad = \int g(u_j) \cdot \overline{T_{(1-\eta_R)\rho\Psi}[\phi u_j]} \dd x
    + E_1(j,R)  \notag\\
  &\qquad = \int \phi g(u_j) \cdot \overline{T_{(1-\eta_R)\rho\Psi}[u_j]} \dd x
    + E_1(j,R) + E_2(j,R)  \notag\\
  &\qquad = \int \phi g(u_j) \cdot \overline{T_{(1-\eta_R)\rho\Psi\Hbb}[u_j]} \dd x
    + E_1(j,R) + E_2(j,R) + E_3(j,R)  \notag\\
  &\qquad = \int \phi g(u_j) \cdot \overline{T_{(1-\eta_R)\rho\Psi\Hbb}[\Gbb u_j]} \dd x  \label{eq:Gcc} \\
  &\qquad\qquad + E_1(j,R) + E_2(j,R) + E_3(j,R) + E_4(j,R)  \notag\\
  &\qquad = 0 + E_1(j,R) + E_2(j,R) + E_3(j,R) + E_4(j,R)  \notag
\end{align}
Here, the last equality follows because $\Hbb(y,\frarg)\Gbb(y) = 0$ by construction. If we can show that
\[
  \lim_{R\to\infty} \lim_{j\to\infty} E_k(j,R) = 0,  \qquad k = 1,2,3,4.
\]
then $\ddpr{f \otimes \overline{\Psi}, \omega} = 0$ and the conclusion of the theorem follows. For $E_1$, $E_2$ this is clear by an argument analogous to the proof of Lemma~\ref{lem:phi_exchange} and for $E_4$ it follows from assumption~(iii).

Concerning $E_3$, we define the linear map $\Jbb(y,\xi) \in \C^{N \times N}$, $(y,\xi) \in \R^d \times \R^d \setminus \{0\}$, by
\[
  \Jbb(y,\xi) w := \begin{cases}
    \rho(y,\xi) \Psi(\xi) \bigl( I_{N \times N} - \Hbb(y,\xi) \bigr) z
        & \text{if $w = \Abb_0(\xi) z$, $z \in \C^N$,}  \\
    0,  & \text{if $w \in (\img \Abb_0(\xi))^\perp$.}
  \end{cases}
\]
From our assumptions we get that the symbol $\R^d \times \R^d \times \R^d \setminus \{0\} \ni (x,y,\xi) \mapsto \Jbb(y,\xi)$ is smooth and positively $0$-homogeneous in $\xi$. Hence, $T_{\Jbb}$ is an $(\Lrm^p \to \Lrm^p)$-bounded pseudodifferential operator, and by construction
\[
  \rho(y,\xi) \Psi(\xi) \bigl( I_{N \times N} - \Hbb(y,\xi) \bigr) = \Jbb(y,\xi) \Abb_0(\xi)
  \quad\text{for all $(y,\xi) \in \R^d \times \R^d \setminus \{0\}$.}
\]
Therefore,
\begin{align*}
  \lim_{j\to\infty} \normb{T_{\rho\Psi - \rho\Psi\Hbb}[u_j] }_p
  &= \lim_{j\to\infty} \normb{T_{\Jbb\Abb_0}[u_j]}_p
    = \lim_{j\to\infty} \normb{T_{\Jbb\Abb_b}[u_j]}_p \\
  &= \lim_{j\to\infty} \normb{T_{\Jbb}[\Acal_b u_j]}_p
    = 0,
\end{align*}
because $\Acal_b u_j \to 0$ in $\Lrm^p(\R^d;\C^l)$. Thus, $\lim_{R\to\infty} \lim_{j\to\infty} E_2(j,R) = 0$. This finishes the proof.
\end{proof}

\begin{remark}
We point out that the preceding results still hold in $\Omega \neq \R^d$ under the same assumptions as in Remark~\ref{rem:domain}. For the second part of that remark, in~\eqref{eq:Gcc} we additionally need to use an argument based on Lemma~\ref{lem:commutation} to ensure that $\Gbb$ is only applied to $\phi u$ with $\supp \phi \subset\subset \Omega$. Then, we only get the result for the \term{restricted wavefront set}:
\[
  \WF_\Omega(\omega) := \setb{ (x,z,\xi) \in \WF(\omega) }{ x \in \Omega } \subset \Xi.
\]
\end{remark}

We can put the preceding result to use in the following examples, which are modeled after the situation occurring in blow-up proofs in the Calculus of Variations, see for example~\cite{Rind12LSYM,Rind11LSIF}:

\begin{example}[Gradients] \label{ex:cc_gradients}
Let $(u_j) \subset \Lrm^p(\Omega;\R^{m \times d})$ with
\[
  \left\{ \begin{aligned}
    \curl \, u_j &= 0         \qquad &&\text{in $\Wrm^{-1,p}$,} \\
    u_j(x) &\in \spn \{M\}        &&\text{for a.e.\ $x \in \Omega$,}
  \end{aligned} \right.
\]
for a fixed matrix $M \in \R^{m \times d}$. The latter condition here means
\[
  u_j(x) = M g_j(x)  \qquad\text{for some $g_j \in \Lrm^p(\Omega;\R)$.}
\]
Then, the following conclusions follow from Corollary~\ref{cor:cc_simple}:
\begin{itemize}
  \item[(i)] If $\rank M \geq 2$, then any MCF $\omega \in \MCF^p(\Omega;\R^{m \times d})$ generated by a subsequence of $(u_j)$ must satisfy
\[
  \qquad \WF_\Omega(\omega) \subset \Xi_\Omega,
\]
where
\[
  \Xi_\Omega := \setb{ (x,z,\xi) \in \Omega \times \sigma\C^{m \times d} \times \Sbb^{d-1} }{ \spn_\C \{M\} \cap \ker \Abb_0(\xi) \neq \{0\} }.
\]
But the set on the right hand side is empty since
\[
  \qquad \ker \Abb_0(\xi) = \set{ a \otimes \xi }{ a \in \C^m }.
\]
Thus, $\omega = 0$ and $(u_j)$ is strongly compact; this result is of course well-known, cf.\ Lemma~2.7 in~\cite{Mull99VMMP} and also Section~3.2 in~\cite{Rind12LSYM}.
  \item[(ii)] If $M = a \otimes n_0$ for $a \in \R^m$, $n_0 \in \Sbb^{d-1}$, then we can still obtain some information:
\[
  \qquad \WF_\Omega(\omega) \subset \setb{ (x,z,\xi) \in \Omega \times \sigma\R^{m \times d} \times \Sbb^{d-1} }{ \xi = \pm n_0 }.
\]
So, while in this case we cannot improve $u_j \toweak u$ to strong convergence, at least we know that oscillations and concentrations can only occur \enquote{in direction $n_0$}.
\end{itemize}
\end{example}

\begin{example}[Linear elasticity]
There exists a second-order constant-rank operator $\Acal$ acting on $\R^{d \times d}_\sym$-valued vector fields that characterizes symmetric gradients, i.e.
\[
  u = \frac{1}{2} \bigl( \nabla v + \nabla v^T \bigr)
  \qquad\text{if and only if}\qquad
  \Acal u = 0,
\]
see Example~3.10~(e) in~\cite{FonMul99AQLS}. Applying our preceding compensated compactness result to the situation of a sequence $(u_j) \subset \Lrm^p(\Omega;\R^{d \times d}_\sym)$ with
\[
  \left\{ \begin{aligned}
    \Acal u_j &= 0         \qquad &&\text{in $\Wrm^{-1,p}$,} \\
    u_j(x) &\in \spn \{M\}        &&\text{for a.e.\ $x \in \Omega$,}
  \end{aligned} \right.
\]
for a fixed matrix $M \in \R^{d \times d}_\sym$. It turns out that
\[
  \ker \Abb_0(\xi) = \setBB{ a \odot \xi = \frac{1}{2} \bigl( a \otimes \xi + \xi \otimes a \bigr) }{ a \in \C^N },  \qquad \xi \in \Sbb^{d-1}.
\]
Then, if $(u_j)$ generates an MCF $\omega \in \MCF^p(\Omega;\R^{d \times d}_\sym)$:
\begin{itemize}
  \item[(i)] If $M$ cannot be written as a symmetric tensor product, then $\omega = 0$ and $(u_j)$ is strongly compact.
  \item[(ii)] If $M = c(a \odot b)$ for some $a,b \in \Sbb^{d-1}$ and $c \in \R$, then
\[
  \qquad \WF_\Omega(\omega) \subset \setb{ (x,z,\xi) \in \Omega \times \sigma\R^{d \times d}_\sym \times \Sbb^{d-1} }{ \text{$\xi = \pm a$ or $\xi = \pm b$} }.
\]
This means that oscillations and concentrations can only occur in direction $a$ \emph{or} $b$ and reflects the inherent rigidity in the system, also see~\cite{Rind11LSIF}, where a similar rigidity was exploited to prove weak* lower semicontinuity of integral functionals in the space BD of functions of bounded deformation.
\end{itemize}
\end{example}

\begin{example}
Let $\Omega \subset \R^2$ and take $\Acal$ to be the constant-rank operator corresponding to Tartar's example
\[
  \partial_1 u^1 + \partial_2 u^2 = 0,  \qquad
  \partial_1 u^3 + \partial_2 u^4 = 0,  \qquad
  u \in \Lrm^p(\Omega;\C^4),
\]
for which ($\xi \in \Sbb^1$)
\[
  \ker \Abb_0(\xi) = \setb{ (a,b,c,d) \in \C^4 }{ (a,b) \perp (\xi_1,\xi_2) \text{ and } (c,d) \perp (\xi_1,\xi_2) }.
\]
Again consider the situation
\[
  \left\{ \begin{aligned}
    \Acal u_j &= 0            \qquad &&\text{in $\Wrm^{-1,p}$,} \\
    u_j(x) &\in \spn \{V\}           &&\text{for a.e.\ $x \in \Omega$,}
  \end{aligned} \right.
\]
where $V = (v_1,v_2,v_3,v_4) \in \C^4$ is fixed. Then:
\begin{itemize}
  \item[(i)] If $(v_1,v_2) \nparallel (v_3,v_4)$, then $\omega = 0$ and $(u_j)$ is strongly compact.
  \item[(ii)] If $(v_1,v_2) \parallel (v_3,v_4)$, then
\[
  \qquad \WF_\Omega(\omega) \subset \setb{ (x,z,\xi) \in \Omega \times \sigma\C^4 \times \Sbb^1 }{ \text{$\xi \perp (v_1,v_2)$} }.
\]
\end{itemize}
\end{example}

\section{Applications} \label{sc:appl}

After having developed aspects of the general theory of microlocal compactness forms, we now move to present several applications.

\subsection{Laminates} \label{ssc:laminates}

The first application concerns laminates (nested microstructures). It will turn out that MCFs not only contain the value distribution and the direction of the various oscillations, but also reflect the hierarchical structure of laminates. As seen in the previous section, they furthermore allow one to quickly read off which differential constraints (e.g.\ curl-freeness) are satisfied by generating sequences.

First, we need the following definition: $\omega \in \MCF^p(\Omega;\C^N)$ is called \term{homogeneous} if $\omega_x, \omega_x^\infty$ are constant in $x$ and $\lambda_\omega = \alpha \Lcal^d \restrict \Omega$ for a constant $\alpha \geq 0$. Equivalently, this is the case if for all $f \in \Fbf^p(\Omega;\C^N)$ of the form $f(x,z,q) = \phi(x) g(z) \cdot q$ and all $\Psi \in \Mcal$ it holds that
\begin{equation} \label{eq:hom_MCF}
  \ddprb{f \otimes \overline{\Psi},\omega} = \dashint_\Omega \phi \dd x \cdot \ddprb{g(z) \cdot q \otimes \overline{\Psi},\omega}.
\end{equation}
For instance, all MCFs generated by the simple (one-directional) oscillations in the Oscillation Lemma~\ref{lem:osc} are homogeneous.

One of the properties of a homogeneous MCF $\omega$ is that its (Lipschitz) domain of definition can be chosen arbitrarily and we simply write $\omega \in \MCF^p_\mathrm{hom}(\C^N)$. A proof sketch of this fact is as follows: Let $(u_j) \subset \Lrm^p(\Omega;\C^N)$ generate a homogeneous MCF $\omega \in \MCF^p(\Omega;\C^N)$ and for simplicity assume $u_j \toweak 0$. Now, let $D \subset \R^d$ be another Lipschitz domain. Choose a Vitali cover $\{a_i + r_i \Omega\}_{i \in \N}$ of $\Lcal^d$-almost all of $D$ consisting of scaled copies of $\Omega$, where $a_i \in D$ and $r_i > 0$. Define $w_j \in \Lrm^p(D;\C^N)$ by
\[
  w_j(x) := u_j \Bigl( \frac{x-a_i}{r_i} \Bigr)  \qquad \text{if $x \in a_i + r_i \Omega$.}
\]
Since $w_j \toweak 0$, Lemma~\ref{lem:eliminate_Rlim} and the following lemma imply that for all $f \in \Fbf^p(D;\C^N)$ with $f(x,z,q) = \phi(x) g(z) \cdot q$, and all $\Psi \in \Mcal$, there exists an error term $E(j)$ with $E(j) \to 0$ as $j \to \infty$ such that
\begin{align*}
  I_j &:= \int_D \phi g(w_j) \cdot \overline{T_\Psi[w_j]} \dd x    \\ 
  &\phantom{:}= \sum_i \int_{a_i + r_i \Omega} \phi(x) g \Bigl(u_j \Bigl( \displaystyle\frac{x-a_i}{r_i} \Bigr) \Bigr) \cdot \overline{T_\Psi \biggl[ u_j \Bigl( \displaystyle\frac{x'-a_i}{r_i} \Bigr) \biggr]} \dd x + E(j).
\end{align*}

We here used the following result, which we factor out because of its independent interest:

\begin{lemma} \label{lem:hom_partition}
Let $(u_j) \subset \Lrm^p(\Omega;\C^N)$ generate the homogeneous microlocal compactness form $\omega \in \MCF^p_\mathrm{hom}(\C^N)$ on the domain $\Omega$ and also $u_j \toweak u$. Moreover, let $\{\Omega_i\}_{i \in \N}$ be a (countable) partition of $\Lcal^d$-almost all of $\Omega$ consisting of open sets $\Omega_i$ ($i \in \N$), $f(x,z,q) = h(x,z) \cdot q \in \Fbf^p(\Omega;\C^N)$, and $\Psi \in \Mcal$. Then,
\[
  \ddprb{f \otimes \overline{\Psi},\omega} = \lim_{j\to\infty} \sum_i \int_{\Omega_i} h(\frarg,u_j) \cdot \overline{T_\Psi[(u_j-u)\ONE_{\Omega_i}]} \dd x 
\]
\end{lemma}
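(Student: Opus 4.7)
The plan is to apply Lemma~\ref{lem:eliminate_Rlim} to recast $\ddpr{f\otimes\overline{\Psi},\omega}$ as a single $j$-limit, decompose the argument of $T_\Psi$ via the partition $\{\Omega_i\}_{i\in\N}$, and use the Commutation Lemma~\ref{lem:commutation} to eliminate the resulting off-diagonal cross-terms. Writing $v_j := u_j - u \toweak 0$ in $\Lrm^p(\Omega;\C^N)$, Lemma~\ref{lem:eliminate_Rlim} yields
\[
\ddprb{f\otimes\overline{\Psi},\omega} = \lim_{j\to\infty}\int_\Omega h(\frarg,u_j)\cdot\overline{T_\Psi[v_j]}\dd x.
\]
Since $v_j = \sum_i v_j\ONE_{\Omega_i}$ in $\Lrm^p$ (dominated convergence with dominant $|v_j|$) and $T_\Psi$ is bounded on $\Lrm^p$, we have $T_\Psi[v_j] = \sum_i T_\Psi[v_j\ONE_{\Omega_i}]$. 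Splitting $\int_\Omega = \sum_k\int_{\Omega_k}$ produces
\[
\int_\Omega h(\frarg,u_j)\cdot\overline{T_\Psi[v_j]}\dd x = \sum_{k,i}\int_{\Omega_k}h(\frarg,u_j)\cdot\overline{T_\Psi[v_j\ONE_{\Omega_i}]}\dd x.
\]
The diagonal part ($k=i$) is precisely the sum in the statement, so the claim reduces to showing that the off-diagonal remainder
\[
\Delta_j := \sum_{k\neq i}\int_{\Omega_k}h(\frarg,u_j)\cdot\overline{T_\Psi[v_j\ONE_{\Omega_i}]}\dd x
\]
tends to $0$ as $j\to\infty$.

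For each pair $(k,i)$ with $k\neq i$, I approximate $\ONE_{\Omega_k},\ONE_{\Omega_i}$ in $\Lrm^p\cap\Lrm^{p'}$ by inner smooth cutoffs $\rho_k\in\Crm_c^\infty(\Omega_k)$ and $\rho_i\in\Crm_c^\infty(\Omega_i)$, which automatically satisfy $\rho_k\rho_i\equiv 0$ since $\Omega_k\cap\Omega_i=\emptyset$. Modulo approximation errors controlled by $\|\rho_k-\ONE_{\Omega_k}\|_{p'}$ and $\|\rho_i-\ONE_{\Omega_i}\|_p$ times uniform $\Lrm^p/\Lrm^{p'}$-bounds, the cross-term reduces to $\int h(\frarg,u_j)\cdot\overline{\rho_k T_\Psi[\rho_i v_j]}\dd x$. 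By Lemma~\ref{lem:commutation},
\[
\rho_k T_\Psi[\rho_i v_j] = T_\Psi[\rho_k\rho_i v_j] + [M_{\rho_k},T_\Psi][\rho_i v_j] = [M_{\rho_k},T_\Psi][\rho_i v_j],
\]
and since $[M_{\rho_k},T_\Psi]$ is compact on $\Lrm^p$ while $\rho_i v_j\toweak 0$, the right-hand side converges to $0$ strongly in $\Lrm^p$; testing against the uniformly $\Lrm^{p'}$-bounded $h(\frarg,u_j)$ makes each individual cross-term vanish as $j\to\infty$.

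The main obstacle is the uniform control of the \emph{infinite} double sum $\Delta_j$, which prevents naively interchanging sum and limit. Hölder's inequality for series supplies the a priori bound $\sum_i\|h(\frarg,u_j)\|_{\Lrm^{p'}(\Omega_i)}\|v_j\|_{\Lrm^p(\Omega_i)}\le\|h(\frarg,u_j)\|_{p'}\|v_j\|_p \le C$, uniformly in $j$, underwriting a standard $\eps/3$-argument: one truncates to the finitely many indices $k,i\le N$, for which Step~2 applies directly, and bounds the tail by $C\|h(\frarg,u_j)\|_{\Lrm^{p'}(E_N)}\|v_j\|_{\Lrm^p(E_N)}$ with $E_N:=\bigcup_{i>N}\Omega_i$. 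Using $|E_N|\to 0$ together with the $(p-1)$-growth estimate~\eqref{eq:h_est} and the uniform $\Lrm^p$-bound on $(u_j)$, the tail can be made uniformly small in $j$; the homogeneity of $\omega$ enters here via $\lambda_\omega\ll\Lcal^d$, which guarantees that no concentration mass is dropped on the Lebesgue-negligible interfaces $\partial\Omega_i$ and hence legitimizes the above truncation.
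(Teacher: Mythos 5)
Your overall strategy --- invoke Lemma~\ref{lem:eliminate_Rlim}, split via the partition, and use the Commutation Lemma~\ref{lem:commutation} to annihilate the off-diagonal terms --- is the same in spirit as the paper's, which localizes to finite subunions $D_N = \bigcup_{i\le N}\Omega_i$ and then inserts, commutes across $T_\Psi$, and removes cutoffs $\zeta_i\in\Crm_c^\infty(\Omega_i)$. However, two of your error estimates do not hold as written, and both in fact require the homogeneity hypothesis to be repaired, not just the tail estimate where you currently invoke it.

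\emph{Approximation error.} The error of replacing $\ONE_{\Omega_k}$ by $\rho_k$ in a cross term is
\[
\int (\ONE_{\Omega_k}-\rho_k)\,h(\frarg,u_j)\cdot\overline{T_\Psi[v_j\ONE_{\Omega_i}]}\dd x
\le \normb{(\ONE_{\Omega_k}-\rho_k)\,h(\frarg,u_j)}_{p'}\normb{T_\Psi[v_j\ONE_{\Omega_i}]}_p,
\]
and by~\eqref{eq:h_est} the factor $h(\frarg,u_j)$ is only uniformly bounded in $\Lrm^{p'}$, so there is no remaining H\"{o}lder exponent for $\ONE_{\Omega_k}-\rho_k$ except $\infty$, where its norm is $1$. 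Thus the error is \emph{not} controlled by $\norm{\rho_k-\ONE_{\Omega_k}}_{p'}$ times a uniform bound. To make $\limsup_j\norm{(\ONE_{\Omega_k}-\rho_k)h(\frarg,u_j)}_{p'}$ small as $\rho_k\toup\ONE_{\Omega_k}$, and analogously $\limsup_j\norm{(\ONE_{\Omega_i}-\rho_i)v_j}_p$, one must rule out asymptotic mass of $|u_j|^p\Lcal^d$ near the interfaces $\partial\Omega_i$, which is precisely $\lambda_\omega=\alpha\Lcal^d\restrict\Omega$. The paper builds this in from the start by choosing $\zeta_i$ with $\sum_i\limsup_j\norm{h(\frarg,u_j)(\zeta_i-\ONE_{\Omega_i})}_{p/(p-1)}<\eps$ and $\sum_i\limsup_j\norm{(u_j-u)(\zeta_i-\ONE_{\Omega_i})}_p<\eps$.

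\emph{Tail bound.} The claimed bound $C\norm{h(\frarg,u_j)}_{\Lrm^{p'}(E_N)}\norm{v_j}_{\Lrm^p(E_N)}$ arises from discrete H\"{o}lder on the \emph{diagonal}, but the off-diagonal remainder also contains the blocks $k\le N,\ i>N$ (and symmetrically), where only one of the two factors lives on $E_N$. After resumming $\sum_{i>N}T_\Psi[v_j\ONE_{\Omega_i}]=T_\Psi[v_j\ONE_{E_N}]$, the correct bound for these blocks is $C\norm{h(\frarg,u_j)}_{p'}\norm{v_j}_{\Lrm^p(E_N)}$ respectively $C\norm{h(\frarg,u_j)}_{\Lrm^{p'}(E_N)}\norm{v_j}_p$; note that one cannot apply discrete H\"{o}lder directly to the double sum, since $\sum_i\norm{v_j}_{\Lrm^p(\Omega_i)}$ need not converge. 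These corrected bounds still vanish as $N\to\infty$ by the same homogeneity argument, so the conclusion survives, but the stated estimate is incorrect. This bookkeeping is why the paper reduces to the finite subpartition $D_N$ at the outset (on both sides of the identity), so that only the cleanly H\"{o}lderable diagonal tail appears.
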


\begin{proof}
By the Locality Lemma~\ref{lem:locality} and the assumption that
\begin{equation} \label{eq:lambda_omega_Leb}
  \lambda_\omega = \wslim_{j\to\infty} \, \abs{u_j}^p \, \Lcal^d \restrict \Omega = \alpha \Lcal^d \restrict \Omega,
\end{equation}
it suffices to show the assertion on the finite union $D_N := \bigcup_{i=1}^N \Omega_i$ for all $N \in \N$. Indeed, from H\"{o}lder's inequality together with~\eqref{eq:lambda_omega_Leb}, we get that the error of restricting to $D_N$ vanishes as $N \toup \infty$.

Let $\eps > 0$ and choose $\zeta_i \in \Crm_c^\infty(\Omega_i)$, $i = 1,\ldots,N$, such that
\[
  \sum_i \limsup_{j\to\infty} \, \normb{h(\frarg,u_j)(\zeta_i - \ONE_{\Omega_i})}_p < \eps
\]
and
\[
  \sum_i \limsup_{j\to\infty} \, \normb{(u_j-u)(\zeta_i - \ONE_{\Omega_i})}_p < \eps.
\]
This is possible again because of~\eqref{eq:lambda_omega_Leb}. Now use Lemmas~\ref{lem:eliminate_Rlim},~\ref{lem:commutation} to derive
\begin{align*}
  \ddprb{f \otimes \overline{\Psi},\omega}_{D_N} &= \lim_{j\to\infty} \int_{D_N} h(\frarg,u_j) \cdot \overline{T_\Psi[u_j-u]} \dd x \\
  &= \lim_{j\to\infty} \sum_i \int_{\Omega_i} \zeta_i h(\frarg,u_j) \cdot \overline{T_\Psi[u_j-u]} \dd x + \BigO(\eps) \\
  &= \sum_i \lim_{j\to\infty} \int_{\Omega_i} h(\frarg,u_j) \cdot \overline{T_\Psi[(u_j-u)\zeta_i]} \dd x + \BigO(\eps) \\
  &= \lim_{j\to\infty} \sum_i \int_{\Omega_i} h(\frarg,u_j) \cdot \overline{T_\Psi[(u_j-u)\ONE_{\Omega_i}]} \dd x + \BigO(\eps)
\end{align*}
Letting $\eps \to 0$, the conclusion of the lemma follows.
\end{proof}

Returning to our previous argument, we get from a short computation using the positive $0$-homogeneity of $\Psi$, and changing variables,
\begin{align*}
  I_j &= \sum_i \int_{a_i + r_i \Omega} \phi(x) g \Bigl(u_j \Bigl( \frac{x-a_i}{r_i} \Bigr) \Bigr) \cdot \overline{T_\Psi[u_j] \Bigl( \frac{x-a_i}{r_i} \Bigr)} \dd x + E(j) \\
  &= \sum_i r_i^d \int_\Omega \phi(a_i + r_i y) g(u_j(y)) \cdot \overline{T_\Psi[u_j](y)} \dd y + E(j).
\end{align*}
Letting $j\to\infty$, we infer from the homogeneity of $\omega$ (with $\omega := \omega_x$ and $\omega^\infty := \omega_x^\infty$),
\begin{align*}
  \lim_{j\to\infty} I_j &= \biggl( \sum_i r_i^d \int_\Omega \phi(a_i + r_i y) \dd y \biggr) \cdot \bigl(\dprb{g \otimes \overline{\Psi},\omega} + \alpha \dprb{g^\infty \otimes \overline{\Psi},\omega^\infty}\bigr) \\
  &= \int_D \phi \dd x \cdot \frac{\ddprb{g(z) \cdot q \otimes \overline{\Psi},\omega}_\Omega}{\abs{\Omega}}.
\end{align*}
On the other hand, if $(w_j)$ generates $\bar{\omega} \in \MCF^p(D;\C^N)$, then $\lim_{j\to\infty} I_j = \ddpr{f \otimes \overline{\Psi}, \bar{\omega}}$. Thus, also $\bar{\omega}$ is homogeneous and
\[
  \frac{\ddprb{g(z) \cdot q \otimes \overline{\Psi},\omega}_\Omega}{\abs{\Omega}} = \frac{\ddprb{g(z) \cdot q \otimes \overline{\Psi},\bar{\omega}}_D}{\abs{D}},
\]
which is what we claimed.

As building blocks for the laminates considered in this section, we will use the Oscillation Lemma~\ref{lem:osc}, also see Example~\ref{ex:osc}. To build up higher-order laminations we will employ the following result:

\begin{proposition}[Laminations] \label{prop:lamination}
Let $A,B \in \C^N$ and assume
\begin{itemize}
  \item[(i)] $u_j \toweak A = \mathrm{const}$ and $v_j \toweak B = \mathrm{const}$ in $\Lrm^p$,
  \item[(ii)] $(u_j)$, $(v_j)$ are $p$-equiintegrable,
  \item[(iii)] $(u_j)$, $(v_j)$ generate the homogeneous MCFs $\omega_1, \omega_2 \in \MCF^p_\mathrm{hom}(\C^N)$ and the homogeneous Young measures $\nu_1,\nu_2 \in \Mbf^1(\C^N)$, respectively.
\end{itemize}
Then, for any $n_0 \in \Sbb^{d-1}$, $\theta \in (0,1)$ there exists an $\Lrm^p$-bounded sequence that generates the homogeneous microlocal compactness form $\bar{\omega} \in \MCF^p_\mathrm{hom}(\C^N)$,
\begin{equation} \label{eq:omega_lamination}
  \bar{\omega} = \theta \omega_1 + (1-\theta) \omega_2 + \Lcal^d \otimes \bigl[ \theta \overline{(A-X)} \nu_1 + (1-\theta) \overline{(B-X)} \nu_2 \bigr] \otimes \overline{\delta}_{\pm n_0},
\end{equation}
where $X := \theta A + (1-\theta) B$.
\end{proposition}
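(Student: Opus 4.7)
My plan is to construct a generating sequence for $\bar{\omega}$ by nesting two scales: a coarse simple laminate between the constants $A$ and $B$ in direction $n_0$ with volume fractions $\theta, 1-\theta$, and, inside every thin strip of this laminate, rescaled copies of the fine-scale sequences $(u_j)$ and $(v_j)$. Concretely, for $k \in \N$ let $\chi_\theta$ be the $1$-periodic extension of $\ONE_{(0,\theta)}$, set $S_k^A := \{x \in \Omega : \chi_\theta(kx\cdot n_0)=1\}$ and $S_k^B := \Omega \setminus S_k^A$, and cover each of these open sets by Vitali families of scaled copies $a_i + r_i \Omega$ of $\Omega$ with radii $r_i \ll 1/k$. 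Define
\[
  w_{k,j}(x) := \begin{cases} u_j\bigl((x-a_i)/r_i\bigr) & \text{if $x \in a_i + r_i \Omega \subset S_k^A$,} \\ v_j\bigl((x-a_i)/r_i\bigr) & \text{if $x \in a_i + r_i \Omega \subset S_k^B$.} \end{cases}
\]
A diagonal extraction $w_n := w_{k_n,j_n}$ with $k_n\to\infty$ first and $j_n\to\infty$ sufficiently fast afterwards should yield an $\Lrm^p$-bounded, $p$-equiintegrable sequence with $w_n \toweak X$.

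To verify \eqref{eq:omega_lamination} I would fix $f(x,z,q) = h(x,z) \cdot q \in \Fbf^p(\Omega;\C^N)$ and $\Psi \in \Mcal$, and by Lemma~\ref{lem:eliminate_Rlim} I need only identify the limit of $\int h(\frarg,w_n) \cdot \overline{T_\Psi[w_n-X]}\dd x$. The key step is to split $w_n-X = (w_n-W_n)+(W_n-X)$, where $W_n := A\ONE_{S^A_{k_n}}+B\ONE_{S^B_{k_n}}$ is the coarse laminate. The coarse piece $W_n-X$ has Fourier content concentrated on the line $\R n_0$ (exactly as in the proof of Lemma~\ref{lem:osc}), whereas the fine piece $w_n-W_n$ contributes only through frequencies that, in the MCF's double limit $j_n\to\infty$, $R\to\infty$, can be made arbitrarily high. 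Inserting the conical cut-offs $\rho_\pm(\xi/\abs{\xi})$ from that proof together with $1-\eta_R$ then decouples the two pieces. The coarse contribution yields, by an application of the Oscillation Lemma to $W_n$ combined with the homogeneous Young measure convergence $h(\frarg,u_{j_n}) \toweak \int h(\frarg,z)\dd\nu_1(z)$ inside each $A$-cell (and analogously for $B$-cells), the last term of \eqref{eq:omega_lamination}. The fine contribution I would evaluate cell by cell via the Locality Lemma~\ref{lem:locality} and Lemma~\ref{lem:hom_partition}; domain-independence of homogeneous MCFs (as discussed after the definition of $\MCF^p_\mathrm{hom}$) together with the positive $0$-homogeneity of $\Psi$, which ensures $T_\Psi$ commutes with spatial rescaling, should produce $\theta\,\ddprb{f\otimes\overline{\Psi},\omega_1} + (1-\theta)\,\ddprb{f\otimes\overline{\Psi},\omega_2}$.

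The hard part will be calibrating the three scales $k_n$, $j_n$, and $\sup_i r_i$ and threading the $R\to\infty$ limit from the definition of MCFs through this hierarchy. One must guarantee that the fine-scale oscillations of the rescaled $u_{j_n},v_{j_n}$ escape the band-limiter $T_{\eta_R}$ for each fixed $R$ before $R$ is sent to infinity; simultaneously, the multiplication by the strip indicator $\chi_\theta(k_n x\cdot n_0)$ naively couples coarse and fine Fourier regimes, and this coupling has to be controlled. I would address it by mollifying $\chi_\theta$ at a scale $\ll 1/k_n$ but $\gg r_i$, so that the Commutation Lemma~\ref{lem:commutation} turns products into asymptotically disjoint sums, and then verify — using Lemmas~\ref{lem:bandlim_compact} and~\ref{lem:TetaR_conv} — that the mollification does not alter any of the limits. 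This bookkeeping of scales, rather than any conceptual difficulty, is where the real work will lie.
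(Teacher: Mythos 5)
Your plan is correct and follows essentially the same route as the paper's proof: the identical two-scale construction (fine copies of $(u_j)$, $(v_j)$ placed inside a coarse $\theta$-laminate in direction $n_0$), the same diagonal extraction realizing the iterated limit $j\to\infty$ then $k\to\infty$, the same linear splitting $w-X=(w-W)+(W-X)$, with the fine part evaluated via Lemma~\ref{lem:hom_partition}, rescaling invariance of $T_\Psi$ and homogeneity, and the coarse part via the Young measures followed by the Fourier-support argument of Lemma~\ref{lem:osc}. The scale-calibration and indicator-mollification issues you flag are exactly what is packaged in the proof of Lemma~\ref{lem:hom_partition} (smooth cutoffs plus the Commutation Lemma), and the $R\to\infty$ bookkeeping disappears once Lemma~\ref{lem:eliminate_Rlim} is invoked, as you already do at the outset.
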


Before we come to the proof, let us interpret the conclusion. The microlocal compactness form $\bar{\omega}$ has two parts: The first is the expected convex combination of $\omega_1$ and $\omega_2$ resulting from the \enquote{fast} oscillations generating $\omega_1$ and $\omega_2$. In contrast to the corresponding result in Young measure theory, however, $\bar{\omega}$ also contains a second part reflecting the \enquote{slowly} oscillating lamination construction in direction $n_0$. In this context recall that the Young measures $\nu_1, \nu_2$ can essentially be calculated from the MCFs $\omega_1, \omega_2$ and the weak limits, see Proposition~\ref{prop:MCF_YM}.

\begin{proof}[Proof of Proposition~\ref{prop:lamination}]
Without loss of generality we assume $n_0 = \ee_d$ and work in $Q = (0,1)^d$, which is possible by the preceding comments on the arbitrary choice of domain for homogeneous MCFs.

\begin{figure}[tb]
\def\svgscale{0.62}
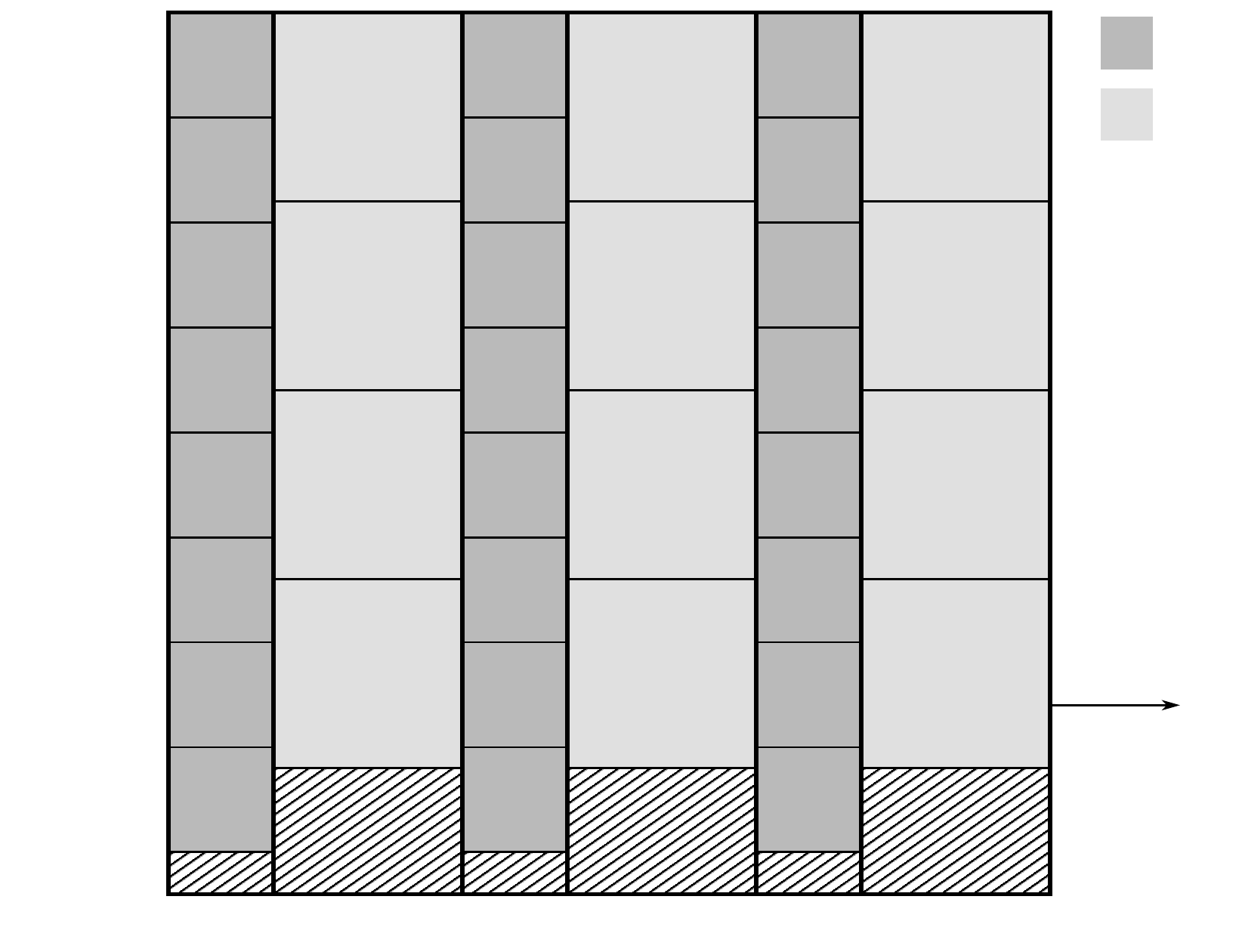
\caption{Construction in the proof of Proposition~\ref{prop:lamination}.}
\label{fig:lamination_proof}
\end{figure}

\proofstep{Step 1.}
For every $k \in \N$ divide $Q$ into $k$ strips of length $1/k$ in direction $n_0 = \ee_d$ as in Figure~\ref{fig:lamination_proof}. Then subdivide every such strip into a strip of length $\theta/k$ and one of length $(1-\theta)/k$. Cover as much as possible of all strips having side length $\theta/k$ with regular cubes $Q(a_i,s_k) := a_i + s_k Q$, where $a_i \in Q$, $i = 1,\ldots, k \cdot \floor{k/\theta}$, and $s_k = \theta/k$. Similarly, cover all strips of side length $(1-\theta)/k$ with cubes $Q(b_i,t_k)$, where $b_i \in Q$, $i = 1,\ldots, k \cdot \floor{k/(1-\theta)}$, and $t_k = (1-\theta)/k$. This of course leaves some fraction of $Q$ uncovered, but the total uncovered volume is less than $1/k$ and so vanishes as $k\to\infty$. Further, set
\[
  D_k^1 := \bigcup_i Q(a_i,s_k),  \qquad
  D_k^2 := \bigcup_i Q(b_i,t_k)
\]
and define
\[
  w_{j,k}(x) := \begin{cases}
    u_j \Bigl( \displaystyle\frac{x-a_i}{s_k} \Bigr)  & \text{if $x \in Q(a_i,s_k)$,} \\
    v_j \Bigl( \displaystyle\frac{x-b_i}{t_k} \Bigr)  & \text{if $x \in Q(b_i,t_k)$,} \\
    0                                    & \text{otherwise.}
  \end{cases}
\]
It is easy to see that (recall $X := \theta A + (1-\theta) B$)
\[
  \bar{w}_k := \wlim_{j\to\infty} w_{j,k} = \begin{cases}
    A  & \text{in $D_k^1$,} \\
    B  & \text{in $D_k^2$,} \\
    0  & \text{otherwise,}
  \end{cases}
  \qquad\qquad
  \wlim_{k\to\infty} \bar{w}_k = X \ONE_Q,
\]
and
\begin{align*}
  \norm{w_{j,k}}^p_p &\leq \theta \norm{u_j}^p_p + (1-\theta) \norm{v_j}^p_p,  \\
  \norm{\bar{w}_k}^p_p &\leq \limsup_{j\to\infty} \, \bigl[ \theta \norm{u_j}^p_p + (1-\theta) \norm{v_j}^p_p \bigr].
\end{align*}

Let $f(x,z,q) = \phi(x) g(z) \cdot q \in \Fbf^p(Q;\C^N)$ with $\phi \in \Crm^\infty(\cl{Q})$, $g \in \Crm^\infty(\C^N)$, and $\Psi \in \Mcal$ be from the collection exhibited in Lemma~\ref{lem:countable_test}. We have
\begin{align}
  &\int_Q \phi g(w_{j,k}) \cdot \overline{T_\Psi[w_{j,k} - X \ONE_Q]} \dd x  \notag\\
  &\qquad = \sum_{m=1,2} \int_{D_k^m} \phi g(w_{j,k}) \cdot \overline{T_\Psi[w_{j,k}-\bar{w}_k]} \dd x \notag\\
  &\qquad\qquad + \sum_{m=1,2} \int_{D_k^m} \phi g(w_{j,k}) \cdot \overline{T_\Psi[\bar{w}_k - X \ONE_Q]} \dd x + E(j,k) \notag\\
  &\qquad =: I_1(j,k) + I_2(j,k) + J_1(j,k) + J_2(j,k) + E(j,k).  \label{eq:int_split}
\end{align}
Here and in all of the following, $E(j,k)$ denotes a generic error term that may change from line to line and satisfies
\[
  \lim_{k\to\infty} \lim_{j\to\infty} E(j,k) = 0.
\]
Above, this property is satisfied because
\begin{align*}
  \abs{E(j,k)} &= \absBB{\int_{Q \setminus (D_k^1 \cup D_k^2)} \phi g(w_{j,k}) \cdot \overline{T_\Psi[w_{j,k} - X \ONE_Q]} \dd x} \\
  &\leq C \norm{\phi}_\infty \cdot \abs{g(0)} \cdot \abs{Q \setminus (D_k^1 \cup D_k^2)}^{(p-1)/p}
  \quad \to \quad 0  \qquad\text{as $k \to \infty$.}
\end{align*}

\proofstep{Step 2.}
We first investigate $I_1$. Applying an argument as in Lemma~\ref{lem:hom_partition}, we get
\begin{align*}
  I_1(j,k) &= \int_{D_k^1} \phi g(w_{j,k}) \cdot \overline{T_\Psi[w_{j,k}-\bar{w}_k]} \dd x \\
  &= \sum_i \int_{Q(a_i,s_k)} \phi(x) g \Bigl(u_j \Bigl(\frac{x-a_i}{s_k}\Bigr)\Bigr) \cdot \overline{T_\Psi\Bigl[ u_j \Bigl(\frac{x'-a_i}{s_k}\Bigr) - A \Bigr](x)} \dd x \\
  &\qquad + E(j,k) \\
  &= \sum_i s_k^d \int_Q \phi(a_i + s_k y) g(u_j(y)) \cdot \overline{T_\Psi[u_j-A](y)} \dd y + E(j,k)
\end{align*}
For the last equality we changed variables and used
\[
  T_\Psi\Bigl[ u_j \Bigl(\frac{x'-a_i}{s_k}\Bigr) \Bigr](x)
  = T_\Psi[u_j]\Bigl(\frac{x-a_i}{s_k}\Bigr),
\]
which can be verified by a simple calculation. Then use the uniform continuity of $\phi$ to get
\begin{align*}
  I_1(j,k) &= \sum_i s_k^d \int_Q \phi(a_i) g(u_j) \cdot \overline{T_\Psi[u_j-A]} \dd y + E(j,k)
\end{align*} 

Passing to the limits $j\to\infty$, $k\to\infty$, we arrive at
\begin{align*}
  \lim_{k\to\infty} \lim_{j\to\infty} I_1(j,k)
    &= \lim_{k\to\infty} \sum_i s_k^d \phi(a_i) \cdot \lim_{j\to\infty} \int_Q g(u_j) \cdot \overline{T_{\Psi}[u_j-A]} \dd y \\
  &= \biggl(\lim_{k\to\infty} \sum_i s_k^d \phi(a_i)\biggr) \cdot \ddprb{g(z) \cdot q \otimes \overline{\Psi}, \omega_1}_Q.
\end{align*}
We further compute the Riemann sums
\[
  \lim_{k\to\infty} \sum_i s_k^d \phi(a_i) = \theta \dashint_Q \phi \dd x.
\]
Thus, also employing~\eqref{eq:hom_MCF},
\begin{equation} \label{eq:I1_lim}
  \lim_{k\to\infty} \lim_{j\to\infty} I_1(j,k) = \theta \, \dashint_Q \phi \dd x \cdot \ddprb{g(z) \cdot q \otimes \overline{\Psi}, \omega_1}_Q
  = \theta \, \ddprb{f \otimes \overline{\Psi}, \omega_1}.
\end{equation}
Analogously, one shows that
\begin{equation} \label{eq:I2_lim}
  \lim_{k\to\infty} \lim_{j\to\infty} I_2(j,k) = (1-\theta) \, \ddprb{f \otimes \overline{\Psi}, \omega_2}.
\end{equation}

\proofstep{Step 3.}
Turning to $J_1$, we first observe
\begin{align*}
  J_1(j,k) &= \int_{D_k^1} \phi g(w_{j,k}) \cdot \overline{T_\Psi[\bar{w}_k - X \ONE_Q]} \dd x \\
  &= \sum_i \int_{Q(a_i,s_k)} \phi(x) g \Bigl(u_j \Bigl(\frac{x-a_i}{s_k}\Bigr)\Bigr) \cdot \overline{T_\Psi[\bar{w}_k - X \ONE_Q](x)} \dd x \\
  &= \sum_i s_k^d \int_Q \phi(a_i + s_k y) g(u_j(y)) \cdot \overline{T_\Psi[\bar{w}_k - X \ONE_Q](a_i + s_k y)} \dd y.
\end{align*}
Now let $j \to \infty$ and represent the limit using the Young measure from assumption~(iii), using the equiintegrability from assumption~(ii):
\begin{align*}
  &\lim_{j\to\infty} J_1(j,k) \\
  &\qquad = \sum_i s_k^d \int_Q \phi(a_i + s_k y) \cdot \biggl[\int g(z) \dd \nu_1(z)\biggr] \cdot \overline{T_\Psi[\bar{w}_k - X \ONE_Q](a_i + s_k y)} \dd y \\
  &\qquad = \int_{D_k^1} \phi(x) \cdot \biggl[\int g(z) \dd \nu_1(z)\biggr] \cdot \overline{T_\Psi[\bar{w}_k - X \ONE_Q](x)} \dd x.
\end{align*}
Clearly, an analogous assertion holds for $J_2(j,k)$. Hence, with
\[
  Y_k(x) := \sum_{m = 1,2} \ONE_{D_k^m}(x) \int g(z) \dd \nu_m(z)
\]
we have
\[
  \lim_{j\to\infty} \bigl( J_1(j,k) + J_2(j,k) \bigr) = \int_Q \phi Y_k \cdot \overline{T_\Psi[\bar{w}_k - X \ONE_Q]} \dd x.
\]

By a geometric argument in Fourier space analogous to the one in the proof of the Oscillation Lemma~\ref{lem:osc}, we may further derive
\begin{align}
  &\lim_{k\to\infty} \lim_{j\to\infty} \bigl( J_1(j,k) + J_2(j,k) \bigr)  \notag\\
  &\qquad = \lim_{k\to\infty} \int_Q \phi Y_k \cdot \overline{(\Psi(+n_0)+\Psi(-n_0)) (\bar{w}_k - X \ONE_Q)} \dd x  \notag\\
  &\qquad = \lim_{k\to\infty} \int_{D_k^1} \phi \cdot \biggl[\int g \dd \nu_1\biggr] \cdot \overline{(\Psi(+n_0)+\Psi(-n_0)) (A - X)} \dd x  \notag\\
  &\qquad\quad + \lim_{k\to\infty} \int_{D_k^2} \phi \cdot \biggl[\int g \dd \nu_2\biggr] \cdot \overline{(\Psi(+n_0)+\Psi(-n_0)) (B - X)} \dd x  \notag\\
  &\qquad = \theta \int_Q \phi \dd x \cdot \int g \dd \nu_1 \cdot  \overline{(\Psi(+n_0)+\Psi(-n_0)) (A - X)} \notag\\
  &\qquad\quad + (1-\theta) \int_Q \phi \dd x \cdot \int g \dd \nu_2 \cdot \overline{(\Psi(+n_0)+\Psi(-n_0)) (B - X)}.  \label{eq:I34_lim}
\end{align}
This corresponds to the action of the microlocal compactness form
\[
  \omega_\mathrm{mix} := \Lcal^d \restrict Q \otimes \bigl[ \theta \overline{(A-X)} \nu_1 + (1-\theta) \overline{(B-X)} \nu_2 \bigr] \otimes \overline{\delta}_{\pm n_0}.
\]

\proofstep{Step 4.}
Now select a diagonal subsequence $(w_\ell) = (w_{j(\ell),k(\ell)})$ of $(w_{j,k})$ such that
\begin{align*}
  &\lim_{\ell\to\infty} \int_Q \phi g(w_\ell) \cdot \overline{T_\Psi[w_\ell - X \ONE_Q]} \dd x \\
  &\qquad = \lim_{k\to\infty} \lim_{j\to\infty} \int_Q \phi g(w_{j,k}) \cdot \overline{T_\Psi[w_{j,k} - X \ONE_Q]} \dd x.
\end{align*}
Combining this with~\eqref{eq:int_split},~\eqref{eq:I1_lim},~\eqref{eq:I2_lim}, and~\eqref{eq:I34_lim}, we have
\begin{align*}
  &\lim_{\ell\to\infty} \int_Q \phi g(w_\ell) \cdot \overline{T_\Psi[w_\ell - X \ONE_Q]} \dd x \\
  &\hspace{-2pt}\qquad = \theta \ddprb{f \otimes \overline{\Psi},\omega_1} + (1-\theta) \ddprb{f \otimes \overline{\Psi},\omega_2} + \ddprb{f \otimes \overline{\Psi},\omega_\mathrm{mix}} = \ddprb{f \otimes \overline{\Psi},\bar{\omega}},
\end{align*}
with $\bar{\omega}$ given in~\eqref{eq:omega_lamination}. An application of Lemma~\ref{lem:eliminate_Rlim} shows that $(w_\ell)$ generates $\bar{\omega}$ and the proof is finished.
\end{proof}

As mentioned before, the preceding result is particularly useful in combination with the Oscillation Lemma~\ref{lem:osc} as we can then compute the MCF generated by laminates of any order. We only illustrate this with the following example of a second-order laminate. Since all sequences will be uniformly bounded, $p \in (1,\infty)$ (and the domain) can be chosen arbitrarily.

\begin{figure}[tb]
\def\svgscale{0.62}
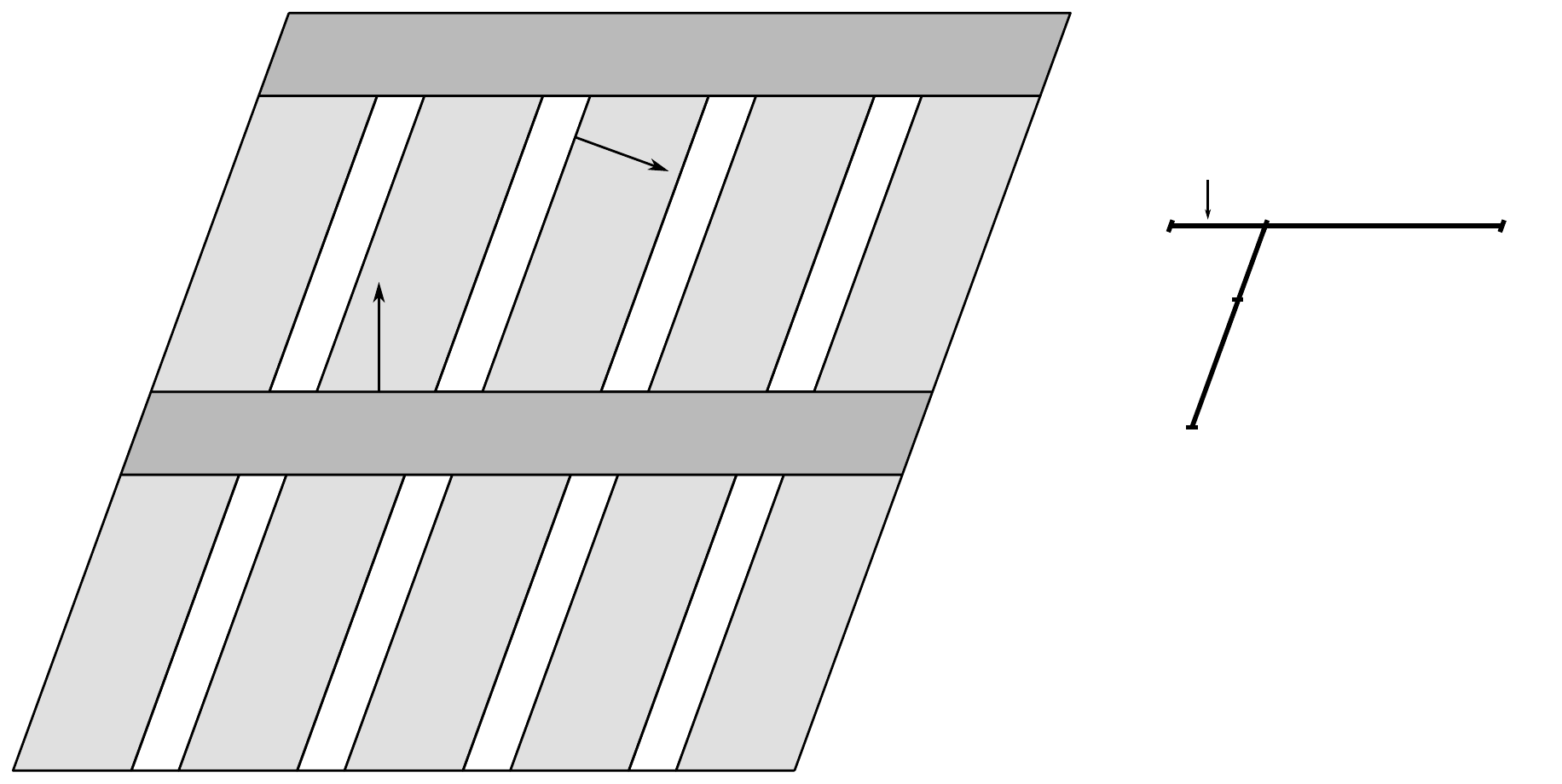
\caption{Second-order lamination in Example~\ref{ex:2nd_order_lam}.}
\label{fig:lamination_ex}
\end{figure}

\begin{example} \label{ex:2nd_order_lam}
Let $A,B,C \in \C^N$, $n_1, n_2 \in \Sbb^{d-1}$ and $\theta_1, \theta_2 \in (0,1)$. We want to construct the laminate shown schematically in Figure~\ref{fig:lamination_ex}. Define
\[
  u_j(x) := \begin{cases}
    A  & \text{if $\floor{j x \cdot n_1} \in (0,\theta_1)$,} \\
    B  & \text{if $\floor{j x \cdot n_1} \in (\theta_1,1)$,}
  \end{cases}
\]
and $v_j(x) := C$ constant. It follows from Lemma~\ref{lem:osc} (also see Example~\ref{ex:osc}) that $(u_j)$ generates the homogeneous MCF
\[
  \omega_1 = \Lcal^d \otimes \bigl[ \theta_1 \overline{(A-M)} \delta_A + (1-\theta_1) \overline{(B-M)} \delta_B \bigr] \otimes \overline{\delta}_{\pm n_1},
\]
where
\[
  M := \theta_1 A + (1-\theta_1) B.
\]
Trivially, $(v_j)$ generates the zero MCF $\omega_2 = 0$. It can also be shown easily that the sequences $(u_j)$ and $(v_j)$ generate the Young measures $\nu_1 = \theta_1 \delta_A + (1-\theta_1) \delta_B$ and $\nu_2 = \delta_C$, respectively. Therefore, with
\[
  X := \theta_2 M + (1-\theta_2) C,
\]
an application of Proposition~\ref{prop:lamination} yields the existence of the MCF
\begin{align*}
  \bar{\omega} &= \theta_2 \omega_1 + (1-\theta_2) \omega_2 + \Lcal^d \otimes \bigl[ \theta_2 \overline{(M-X)} \nu_1 + (1-\theta_2) \overline{(C-X)} \nu_2 \bigr] \otimes \overline{\delta}_{\pm n_2} \\
  &= \Lcal^d \otimes \Bigl\{ \bigl[ \theta_2 \theta_1 \overline{(A-M)} \delta_A + \theta_2 (1-\theta_1) \overline{(B-M)} \delta_B \bigr] \otimes \overline{\delta}_{\pm n_1} \\
  &\qquad\quad\hspace{2pt} + \bigl[ \theta_2 \overline{(M-X)} (\theta_1 \delta_A + (1-\theta_1) \delta_B)    + (1-\theta_2) \overline{(C-X)} \delta_C\bigr] \otimes \overline{\delta}_{\pm n_2} \Bigr\}.
\end{align*}
Consequently, from this MCF we can not only read off the distribution of values and the oscillation directions, but also the \emph{hierarchy} of the laminate.

Finally, if $A,B,C \in \R^{m \times d}$ and we impose the constraint that $\curl \, w_j = 0$, we infer from Theorems~\ref{thm:diff_omega},~\ref{thm:diff_genseq}, also see Example~\ref{ex:osc_curl0}, the necessary and sufficient conditions
\[
  A-B = a \otimes n_1  \quad\text{and}\quad
  M-C = b \otimes n_2  \qquad\text{for some $a,b \in \R^m$,}
\]
for $w_j$ to be asymptotically a sequence of gradients. This conclusion is of course in agreement with Hadamard's jump condition (in conjunction with a boundary adjustment), also see~\cite{BalJam87FPMM}.
\end{example}

However, we note that here the \enquote{jump condition} is only imposed asymptotically, not for all elements of the sequence.

\subsection{Relaxation} \label{ssc:relax}

Just like in Young's original works~\cite{Youn37GCEA,Youn42GSCV_1,Youn42GSCV_2,Youn80LCVO}, we now consider as an application the relaxation of integral functionals. More precisely, we extend integral functionals depending on $\Lrm^p$-gradients to functionals on $p$-growth gradient MCFs and study microstructures of minimizing sequences.

In all of the following, $\Omega \subset \R^d$ is an open Lipschitz domain. Let $f(x,A) = h(x,A) : A \in \Fbf^p(\Omega;\R^{m \times d})$, where \enquote{$:$} denotes the scalar product between matrices (when considered as vectors in $\R^{md}$). We consider the functional
\[
  \Jcal[\nabla u] := \int_\Omega f(x, \nabla u(x)) \dd x,
  \qquad u \in \Wrm^{1,p}(\Omega;\R^m).
\]
For relaxation problems involving non-(quasi)convex $f$, usually the following three questions are of interest:
\begin{itemize}
  \item[(Q1)] What is the \emph{relaxation} of the functional, i.e.\ the $\Wrm^{1,p}$-weakly lower semicontinuous envelope of $\Jcal$ (defined to be the largest $\Wrm^{1,p}$-weakly lower semicontinuous function below $\Jcal$)?
  \item[(Q2)] Can one extend the functional $\Jcal$ to a larger space, in which the minimization problem always has a solution?
  \item[(Q3)] What is the \emph{microstructure} that develops in minimizing sequences to reach that relaxed value?
\end{itemize}

As is well-known, see for example Chapter~9 of~\cite{Daco08DMCV}, the \term{relaxation} is given by
\[
  \Jcal_*[\nabla u] = \int_\Omega Qf(x, \nabla u(x)) \dd x,
  \qquad u \in \Wrm^{1,p}(\Omega;\R^m),
\]
where $Qf \colon \Omega \times \R^{m \times d} \to \R$ denotes the quasiconvex envelope of $f$ with respect to the second argument. We recall that a locally bounded Borel function $g \colon \R^{m \times d} \to \R$ is called \term{quasiconvex} if
\[
  g(A) \leq \dashint_{B(0,1)} g(A + \nabla \phi(y)) \dd y
\]
for all $A \in \R^{m \times d}$ and all $\phi \in \Crm_c^\infty(B(0,1);\R^m)$. It can be shown that the domain $B(0,1)$ can equivalently be replaced by any other bounded Lipschitz domain and, if $g$ has $p$-growth, $\phi$ can be chosen from $\Wrm^{1,p}(B(0,1);\R^m)$ instead. The \term{quasiconvex envelope} $Qg$ is defined as the largest quasiconvex function below $g$. If $g$ is continuous and bounded from below (which we assume), it can be expressed via Dacorogna's formula
\[
  Qg(A) = \inf \, \setBB{ \dashint_{B(0,1)} g(A + \nabla \phi(y)) \dd y }{ \phi \in \Crm_c^\infty(B(0,1);\R^m) }.
\]
We here use the convention that $Qg$ and hence also $\Jcal_*$ are allowed to take the value $-\infty$. See Chapters~5,~6 in~\cite{Daco08DMCV} for details on these notions. This classical relaxation formula answers the first question~(Q1): The relaxation of $\Jcal$ is simply $\Jcal_*$.

To investigate the second question~(Q2), we define $\MCF_\mathrm{gen}^p(\Omega;\R^{m \times d})$ to denote the subset of all $\omega \in \MCF^p(\Omega;\R^{m \times d})$ with the additional property that there exists a sequence $(W_j) \subset \Lrm^p(\Omega;\R^{m \times d})$ generating $\omega$ (this is not automatic, see Example~\ref{ex:MCF_nogen}). Then we consider the following relaxation problem:

\begin{problem}[Relaxation] \label{pr:gamma}
Find an \enquote{extension} $\bar{\Jcal}$ of $\Jcal$ onto the larger space $(\Lrm^p \times \MCF^p)(\Omega;\R^{m \times d})$. More precisely, find a functional $\bar{\Jcal} \colon (\Lrm^p \times \MCF_\mathrm{gen}^p)(\Omega;\R^{m \times d})$ such that
\begin{align*}
  &\min \, \setB{ \bar{\Jcal}[W,\omega] }{ \text{$(W,\omega) \in (\Lrm^p \times \MCF_\mathrm{gen}^p)(\Omega;\R^{m \times d})$, $\curl \, W = 0$, and}\\
  &\hspace{115pt}\text{$\omega$ is a gradient MCF in the sense of~\eqref{eq:Aomega0_ext},~\eqref{eq:A_curl}} } \\
  &\qquad = \inf \, \setB{ \Jcal[\nabla u] }{ u \in \Wrm^{1,p}(\Omega;\R^m) }
\end{align*}
and
\[
  \Jcal[\nabla u_j] \to \bar{\Jcal}[\nabla u,\omega]
  \text{ whenever $\nabla u_j \toweak \nabla u$ and $(\nabla u_j)$ generates the MCF $\omega$.}
\]
\end{problem}

We compute, using Lemma~\ref{lem:bandlim_compact} and employing the (classical) Young measure $(\nu_x)_{x \in \Omega}$ generated by the sequence $(\nabla u_j)$,
\begin{align*}
  \lim_{j\to\infty} \Jcal[\nabla u_j] &= \lim_{R\to\infty} \lim_{j\to\infty} \int_\Omega h(\frarg,\nabla u_j) : \overline{T_{\eta_R}[\nabla u_j]} \dd x \\
  &\qquad + \lim_{R\to\infty} \lim_{j\to\infty} \int_\Omega h(\frarg,\nabla u_j) : \overline{T_{1-\eta_R}[\nabla u_j]} \dd x \biggr] \\
  &= \int_\Omega \int h(x,z) : \overline{\nabla u(x)} \dd \nu_x(z) \dd x
    + \ddprb{f \otimes I,\omega} \\
  &= \int_\Omega \dprb{h(x,\frarg) : \overline{\nabla u(x)}, \nu_x} + \dprb{h(x,\frarg) \otimes I,\omega_x} \dd x \\
  &\qquad + \int_{\cl{\Omega}} \dprb{h^\infty(x,\frarg) \otimes I,\omega_x^\infty} \dd \lambda_\omega(x).
\end{align*}
By Proposition~\ref{prop:MCF_YM}, the Young measure $(\nu_x)$ can be computed from $\nabla u$ and $\omega$, whence we infer the existence of $\bar{\Jcal}$ that only depends on $\nabla u$ and $\omega$. In principle, we can also write down an expression for $\bar{\Jcal}$ only depending on $\nabla u$ and $\omega$, using the procedure from the proof of Proposition~\ref{prop:MCF_YM}. However, in general this is rather cumbersome, so we stick to the above expression involving both the MCF and the Young measure. 

Trivially, $\inf \bar{\Jcal} \leq \inf \Jcal$. On the other hand, the minimization of $\bar{\Jcal}$ cannot yield a value strictly lower than the infimum of $\Jcal$, because, by assumption all admissible MCFs are generated by sequences of gradients (after projecting, see Remark~\ref{rem:Afree_proj}). Finally, if $\nabla u_j \toweak \nabla u$ and $(\nabla u_j)$ generates the MCF $\omega$, then $\Jcal[\nabla u_j] \to \bar{\Jcal}[\nabla u,\omega]$. Thus, for the extended functional $\bar{\Jcal}$ the minimum value is always attained.  This shows that $\bar{\Jcal}$ satisfies all the requirements of Problem~\ref{pr:gamma}

Turning to~(Q3), we follow an approach to study the formation of microstructure when approaching the minimum popularized in M\"{u}ller's influential lecture notes~\cite{Mull99VMMP}, and investigate the following related problem involving \emph{pointwise} minimizers: Define the set of pointwise minimizers $Z(x) := \mathrm{argmin} \, f(x) \subset \sigma\R^{m \times d}$ of $f$, i.e.\
\[
  A_* \in Z(x) \qquad\text{if and only if}\qquad
  f(x,A_*) \leq f(x,A) \quad\text{for all $A \in \sigma\R^{m \times d}$.}
\]
Moreover, assume that $Z(x)$ satisfies the continuity condition (ii) in Theorem~\ref{thm:cc_general}, which is for example trivially satisfied if $Z(x) = Z$ for a.e.\ $x \in \cl{\Omega}$. Now assume that a sequence $(u_j) \subset \Wrm^{1,p}(\Omega;\R^m)$ is not only minimizing, but even satisfies the following stronger differential inclusion:
\begin{equation} \label{eq:min_diff_incl}
  \nabla u_j(x) \in Z(x)  \qquad\text{for $\Lcal^d$-a.e.\ $x \in \Omega$.}
\end{equation}
Alternatively, we can assume the weaker condition~(iii) from Theorem~\ref{thm:cc_general}. By said theorem (also cf.\ Example~\ref{ex:cc_gradients}), we can conclude
\begin{equation} \label{eq:WF_relax}
\begin{aligned}
  \WF_\Omega(\omega) \subset \setb{ (x,z,\xi) &\in \Omega \times \sigma\R^{m \times d} \times \Sbb^{d-1} }{ \text{There exists $a \in \R^m \setminus \{0\}$} \\
  &\text{such that $a \otimes \xi \in \spn_\C Z(x)$} }.
\end{aligned}
\end{equation}
This in particular means that oscillations and concentrations can only occur \emph{in fixed directions} or even not at all if $\spn_\C Z(x)$ does not contain any rank-one matrix.

The considerations so far in particular imply that the theory of microlocal compactness forms retains enough information to study relaxations in the presence of anisotropic effects. This will be demonstrated in the following example, which shows that the MCF allows us to infer directional properties of microstructure:

\begin{example}
Let $n_0 \in \Sbb^{d-1}$ be a unit vector and consider the following integrand with critical anisotropy:
\[
  f(A) := \abs{A}^2 - n_0^T A^T A n_0,  \qquad A \in \R^{m \times d}.
\]
Note that this is the only integrand in the family $\abs{A}^2 - \gamma n_0^T A^T An_0$, $\gamma \in \R$, that is both bounded from below and has non-trivial pointwise minima.
Alternatively, one can write $f$ in the form
\[
  f(A) = A : A - A (n_0 \otimes n_0) : A,
\]
and so it follows that $f \in \Fbf^p(\Omega;\R^{m \times d})$. We can also estimate that $f \geq 0$ and by an elementary computation, $(\partial f/\partial A)(A_0) = 2 A_0 - 2 A_0(n_0 \otimes n_0)$ vanishes if and only if $A_0 = a \otimes n_0$ for some $a \in \R^m$. 

Assume that $(u_j) \subset \Wrm^{1,2}(\Omega;\R^m)$ such that $(\nabla u_j)$ generates the (generalized) Young measure $\nu = (\nu_x,\lambda_\nu,\nu_x^\infty) \in \Ybf^2(\Omega;\R^{m \times d})$ as well as the MCF $\omega \in \MCF^2(\Omega;\R^{m \times d})$. Then, we can write $\bar{\Jcal}$ as a function of $\nu$,
\begin{align*}
  \bar{\Jcal}[\nu] &= \int_\Omega \int_{\R^{m \times d}} \abs{A}^2 - n_0^T A^T A n_0 \dd \nu_x(A) \dd x\\
  &\qquad + \int_{\cl{\Omega}} \int_{\partial \B^{m \times d}} 1 - n_0^T A^T A n_0 \dd \nu_x^\infty(A) \dd \lambda_\nu(x).
\end{align*}

To investigate minimizing microstructure, assume furthermore that $(\nabla u_j)$ satisfies the differential inclusion~\eqref{eq:min_diff_incl} with
\[
  Z := \mathrm{argmin} \, f = \set{ a \otimes n_0 }{ a \in \R^m }.
\]
Then, the wavefront condition~\eqref{eq:WF_relax} entails
\[
  \WF_\Omega(\omega) \subset \setb{ (x,z,\xi) \in \Omega \times \sigma\R^{m \times d} \times \Sbb^{d-1} }{ \xi = n_0 }.
\]
This allows us to draw the following conclusions:
\begin{enumerate}
  \item Oscillations and concentrations asymptotically have to be in direction $n_0$.
  \item Since the Fourier support of a concentration profile (cf.\ Example~\ref{ex:conc}) is asymptotically close to the set $\{\pm n_0\}$, concentrations have to look like \enquote{ridges} with normal $n_0$; single poles for example are not possible.
\end{enumerate}
We conclude the discussion by noting that the theory of MCFs provides a way to assign a precise meaning to statements about the \enquote{asymptotic direction} of oscillations and concentrations, which otherwise is only an intuitive notion.
\end{example}

\subsection{Propagation of singularities} \label{ssc:propagation}

In this last section we briefly explore an application to propagation of singularities in semilinear PDE systems, which was also one of Tartar's main motivations behind H-measures~\cite{Tart90HMNA}, also cf.~\cite{Miel99FPYM,McPaTa85WLSH}.

Define the linear PDE operator
\[
  \Acal := \sum_k A^{(k)} \partial_k,
  \qquad \text{where $A^{(k)} \in \R^{m \times m}$, $k = 1,\ldots,d$,}
\]
and consider for $u \colon (0,T) \times \Omega \to \R^m$ ($\Omega$ a Lipschitz domain) the semilinear system
\begin{equation}  \label{eq:semlin}
  \partial_t u - \Acal u = g(x,u),
\end{equation}
where $g \colon \Omega \times \R^m \to \R^m$ is a given semilinearity, which we assume to be a Carath\'{e}odory function with $\abs{g(x,z)} \leq C(1+\abs{z})$ for a constant $C > 0$ and all $(x,z) \in \Omega \times \R^m$.

The system is \term{hyperbolic} if for all $\xi \in \R^d \setminus \{0\}$ the matrix $\sum_k A^{(k)} \xi_k$ has only real eigenvalues or, equivalently, the symbol $\Abb(\xi)$ has only purely imaginary eigenvalues.

The aim of this section is to \enquote{replace} $u$ by an MCF $\omega$ in the above system and derive the \enquote{equations} that $\omega$ satisfies. This then allows us to understand better the propagation of singularities along the flow governed by~\eqref{eq:semlin}.

Let $f(x,z,q) = h(x,z) \cdot q \in \Fbf^p((0,T) \times \Omega;\R^m)$ be of class $\Crm^1$ and $\Psi \in \Mcal$. Abbreviating the Fourier multiplier $T_{(1-\eta_R) \Psi}$ by $T$, and suppressing the $x$-dependence of various quantities for ease of notation, we compute for $\phi \in \Crm_c^1((0,T) \times \Omega)$:
\begin{align}
  &-\int \!\! \int (\partial_t \phi) \, h(u) \cdot \overline{T[u]} \dd x \dd t \notag\\
  &\qquad = \int \!\! \int \phi \, \bigl( Dh(u) \partial_t u \bigr) \cdot \overline{T[u]} + \phi \, h(u) \cdot \overline{T[\partial_t u]} \dd x \dd t \notag\\
  &\qquad = \int \!\! \int \phi \, \bigl( Dh(u) \Acal u \bigr) \cdot \overline{T[u]} + \phi \, \bigl( Dh(u) g(u) \bigr) \cdot \overline{T[u]}   \label{eq:semlin_PDE_omega_int}\\
  &\qquad\qquad\qquad + \phi \, h(u) \cdot \overline{T[\Acal u]} + \phi \, h(u) \cdot \overline{T[g(u)]} \dd x \dd t.  \notag
\end{align}
For the third term we get
\begin{align*}
  &\int \!\! \int \phi \, h(u) \cdot \overline{T[\Acal u]} \dd x \dd t
    = - \sum_k \int \!\! \int \partial_k \bigl( \phi \, h(u) \bigr) \cdot \overline{A^{(k)} T[u]} \dd x \dd t \\
  &\qquad= - \sum_k \int \!\! \int (\partial_k \phi) \, h(u) \cdot \overline{A^{(k)} T[u]}
    + \phi \, \bigl( Dh(u) \partial_k u \bigr) \cdot \overline{A^{(k)} T[u]} \dd x \dd t.
\end{align*}
At this point of the discussion we further need to assume on $h$ the structural \term{commutation relations}
\begin{equation} \label{eq:Ak_commute}
  [A^{(k)}]^* Dh(u) = Dh(u) A^{(k)}  \qquad\text{for $k = 1,\ldots,d$.}
\end{equation}
Note that these are clearly satisfied if all $A^{(k)}$ are real and scalar, i.e.\ $A^{(k)} = a^{(k)} I_{m \times m}$ for $a^{(k)} \in \R$. Then, with~\eqref{eq:Ak_commute}, we may continue the preceding transformations to arrive at
\[
  \int \!\! \int \phi \, h(u) \cdot \overline{T[\Acal u]} \dd x \dd t
  = - \int \!\! \int (\Acal^* \phi) \, h(u) \cdot \overline{T[u]} + \phi \, \bigl( Dh(u) \Acal u \bigr) \cdot \overline{T[u]} \dd x \dd t,
\]
where
\[
  \Acal^* := \sum_k [A^{(k)}]^* \partial_k.
\]
Plugging this into~\eqref{eq:semlin_PDE_omega_int} and observing the surprising fact that we may cancel terms, we get
\begin{align*}
  &- \int \!\! \int (\partial_t \phi) \, h(u) \cdot \overline{T[u]} \dd x \dd t \\
    &\quad\;\; = \int \!\! \int \phi \, \bigl( Dh(u) g(u) \bigr) \cdot \overline{T[u]} - (\Acal^* \phi) \, h(u) \cdot \overline{T[u]} + \phi \, h(u) \cdot \overline{T[g(u)]} \dd x \dd t.
\end{align*}

Now assume we are given a sequence $(u_j) \subset (\Wrm^{1,p} \cap \Crm^1)((0,T) \times \Omega;\R^m)$ with every $u_j$ solving~\eqref{eq:semlin} in the above sense for a semilinearity $g_j$, that is,
\[
  \partial_t u_j - \Acal u_j = g_j(x,u_j).
\]
We also assume $u_j \toweak u$. In fact, it suffices to assume that the preceding equation is satisfied in the weaker sense that the $\Lrm^p$-norm of $\partial_t u_j - \Acal u_j - g_j(x,u_j)$ vanishes as $j\to\infty$. Up to a subsequence, the pairs $(u_j,g_j(\frarg,u_j))$ generate a microlocal compactness form $\omega \in \MCF^p((0,T) \times \Omega;\C^m \times \C^m)$. Write the preceding equation for $u_j$ and let first $j\to\infty$, then $R\to\infty$, to derive
\begin{align*}
  &-\ddprb{(\partial_t \phi) \, h(z_1) \cdot q_1 \otimes \overline{\Psi}, \omega} \\
  &\qquad = \ddprb{\phi \, \bigl[ Dh(z_1) z_2 \cdot q_1 + h(z_1) \cdot q_2 \bigr] \otimes \overline{\Psi}, \omega} 
  -\ddprb{(\Acal^* \phi) \, h(z_1) \cdot q_1 \otimes \overline{\Psi}, \omega}.
\end{align*}
Here, the splittings $z = (z_1,z_2), q = (q_1,q_2) \in \C^m \times \C^m$ correspond to the two parts of the generating sequence.

Rearranging, we have arrived at:

\begin{theorem} \label{thm:MCF_system}
The MCF $\omega \in \MCF^p((0,T) \times \Omega;\R^m \times \R^m)$ satisfies the \term{extended (MCF) system} associated to~\eqref{eq:semlin}, that is
\[
  \ddprb{(\partial_t \phi - \Acal^* \phi) \, h(z_1) \cdot q_1 \otimes \overline{\Psi}, \omega} = -\ddprb{\phi \, \bigl[ Dh(z_1) z_2 \cdot q_1 + h(z_1) \cdot q_2 \bigr] \otimes \overline{\Psi}, \omega}
\]
for all $\phi \in \Crm_c^1((0,T) \times \Omega)$, $h \in \Crm^1(\R^m;\R^m)$ such that $h^\infty$ exists in the sense of~\eqref{eq:h_infty} and the commutation relations~\eqref{eq:Ak_commute} are satisfied, and all $\Psi \in \Mcal$.
\end{theorem}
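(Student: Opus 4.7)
The strategy is essentially to run the formal computation performed in the paragraphs preceding the theorem in full rigor, then pass to the limit $j\to\infty$ and $R\to\infty$ using the definition of $\omega$ together with Lemma~\ref{lem:phi_exchange} and Lemma~\ref{lem:eliminate_Rlim}. Concretely, fix $\phi \in \Crm_c^1((0,T)\times\Omega)$, $h\in\Crm^1(\R^m;\R^m)$ with recession function $h^\infty$, and $\Psi\in\Mcal$; write $T := T_{(1-\eta_R)\Psi}$. The first step is to integrate by parts in $t$ in
\[
  I_j^R := -\int_0^T\!\!\int_\Omega (\partial_t\phi)\,h(u_j)\cdot\overline{T[u_j]} \dd x \dd t,
\]
which, because $h\in\Crm^1$ with the appropriate growth and $u_j\in\Wrm^{1,p}\cap\Crm^1$, produces the two terms $\int\!\!\int \phi (Dh(u_j)\partial_t u_j)\cdot \overline{T[u_j]}$ and $\int\!\!\int \phi h(u_j)\cdot \overline{T[\partial_t u_j]}$. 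Substituting $\partial_t u_j = \Acal u_j + g_j(x,u_j)$ (valid up to an $\Lrm^p$-small error that disappears as $j\to\infty$) yields four terms, of which two already have the form needed in the conclusion.

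The second step handles the term $\int\!\!\int \phi h(u_j)\cdot \overline{T[\Acal u_j]}$. Here one uses that $T$ is a translation-invariant Fourier multiplier and hence commutes with each $\partial_k$, so that $T[\Acal u_j] = \Acal T[u_j]$. An integration by parts in $x$ then moves the derivative onto $\phi h(u_j)$, producing $-\sum_k\int\!\!\int (\partial_k\phi) h(u_j)\cdot\overline{A^{(k)}T[u_j]} - \sum_k\int\!\!\int \phi (Dh(u_j)\partial_k u_j)\cdot \overline{A^{(k)}T[u_j]}$. At this point the commutation hypothesis~\eqref{eq:Ak_commute} is invoked: it lets one rewrite $(Dh(u_j)\partial_k u_j)\cdot\overline{A^{(k)}T[u_j]} = (A^{(k)}Dh(u_j)\partial_k u_j)\cdot\overline{T[u_j]} = (Dh(u_j)A^{(k)}\partial_k u_j)\cdot\overline{T[u_j]}$, summing to $(Dh(u_j)\Acal u_j)\cdot \overline{T[u_j]}$. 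This is exactly the cancelling term that appears with the opposite sign from the time integration by parts; the remaining first-order term collects as $(\Acal^*\phi)h(u_j)\cdot\overline{T[u_j]}$.

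The third step is the passage to the limit. After the cancellations we are left with the identity
\[
  -\int\!\!\int (\partial_t\phi - \Acal^*\phi)\, h(u_j)\cdot \overline{T[u_j]} \dd x \dd t = \int\!\!\int \phi\bigl[(Dh(u_j)g_j(x,u_j))\cdot\overline{T[u_j]} + h(u_j)\cdot\overline{T[g_j(x,u_j)]}\bigr] \dd x \dd t + o(1),
\]
where the error collects the contributions from $\partial_t u_j - \Acal u_j - g_j(x,u_j)\to 0$ in $\Lrm^p$ and is controlled by Mihlin-type multiplier estimates. Letting $j\to\infty$ and then $R\to\infty$, Theorem~\ref{thm:omega} applied to the joint sequence $(u_j,g_j(\cdot,u_j))$ identifies each of the four bilinear brackets with the corresponding duality pairing against $\omega\in\MCF^p((0,T)\times\Omega;\R^m\times\R^m)$, giving the asserted identity for integrands built from $z=(z_1,z_2)$, $q=(q_1,q_2)$.

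The main obstacle is the third step: the integrands $Dh(u_j)\partial_t u_j$, $Dh(u_j)\Acal u_j$ need not be bounded in any dual $\Lrm^{p'}$ space uniformly in $j$, so one cannot directly pair them against $\overline{T[u_j]}$ and take limits. The cancellation in the second step is therefore not a cosmetic simplification but essential; it must be executed at finite $j$ and $R$ before any limit is taken. A secondary technical point is that Theorem~\ref{thm:omega} is stated for $h\in\Crm(\cl{\Omega}\times\C^N)$ satisfying the growth condition in $\Fbf^p$, whereas our test $h$ is only assumed $\Crm^1$ with $h^\infty$ in the sense of~\eqref{eq:h_infty}; the latter class is contained in $\Fbf^p$ in the relevant tensor sense, and both $Dh(z_1)z_2\cdot q_1$ and $h(z_1)\cdot q_2$ are legitimate test integrands in $\Fbf^p((0,T)\times\Omega;\C^m\times\C^m)$, so this is only a bookkeeping matter.
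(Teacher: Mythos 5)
Your proposal reproduces the paper's own derivation, which appears verbatim in the paragraphs immediately preceding the theorem: integration by parts in $t$, substitution of the PDE, commutation of $T$ with $\partial_k$, integration by parts in $x$, invocation of~\eqref{eq:Ak_commute} to cancel the $Dh(u_j)\Acal u_j$ terms, and then the $j\to\infty$, $R\to\infty$ limits against $\omega$; your remarks on why the cancellation must precede the limit and on the test-function bookkeeping are also consistent with the paper. The only (inconsequential) slip is in the first intermediate identity of your second step, where you write $(A^{(k)}Dh(u_j)\partial_k u_j)\cdot\overline{T[u_j]}$: since $a\cdot\overline{Ab}=([A^{(k)}]^*a)\cdot\overline{b}$ for real $A^{(k)}$, this should read $([A^{(k)}]^*Dh(u_j)\partial_k u_j)\cdot\overline{T[u_j]}$, which is exactly where~\eqref{eq:Ak_commute} is applied to produce $Dh(u_j)A^{(k)}\partial_k u_j$.
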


Notice that the right-hand side in this definition takes the role of a semilinear interaction term and is zero if $g_j \equiv 0$ for all $j$. This extended system now defines a flow on the level of MCFs and hence describes the propagation of oscillations and concentrations or, equivalently, of (lack of) compactness. Consequently, a solution to the extended system could be called a \enquote{compactness flow}.

If the original system is linear and $g_j = g$ is just a function of $x$, we can simplify the above extended system to
\[
  \ddprb{(\partial_t \phi - \Acal^* \phi) \, h(z) \cdot q \otimes \overline{\Psi}, \omega_u} = -\ddprb{\phi \, Dh(z) g \cdot q \otimes \overline{\Psi}, \omega_u},
\]
where now $\omega_u$ is only the $u$-part of $\omega$. This shows that in this situation oscillations and concentrations are only transported according to a similar law as in the original system and the more complicated interactions between the solution and the semilinearity, which are present in the full extended semilinear system above, do not occur.

\begin{remark}[Comparison to PDEs for measures,~\cite{JoMeRa95TCCN}]
To illustrate the differences between the preceding result and previous works on  \enquote{PDEs for measures} or \enquote{Young measure solutions} to PDEs as for instance in Theorem~1.5 of~\cite{JoMeRa95TCCN}, we write the Young measure system corresponding to  our system~\eqref{eq:semlin}: Consider a sequence of solutions $u_j$ and a corresponding sequence $v_j(x) = g(x,u_j(x))$ of semilinearities, which we further assume to generate Young measures $\mu_{t,x}$ and $\nu_{t,x}$, respectively (this can be achieved after selecting a subsequence). We arrive at the system
\[
  \partial_t \mu_{t,x} - \Acal \mu_{t,x} = \nu_{t,x}  \qquad\text{in $\Mbf(\R^m)$.}
\]
Many works on this topic such as~\cite{JoMeRa95TCCN} now focus on the relationship between $\mu_{t,x}$ and $\nu_{t,x}$: Indeed, it is one of the main achievements of Theorem~1.5 in \textit{loc.\ cit.} that, in their more restricted situation, this system can be written in terms of $\mu_{t,x}$ only, thus expressing a compensated compactness fact (in the sense that products of weakly converging sequences converge to the product of the weak limits, which is the main thrust of~\cite{JoMeRa95TCCN}). Our result, however, goes further than the above Young measure system: We are not only interested in the flow of the \emph{value distributions} expressed in the equations for $\mu_{t,x}$ and $\nu_{t,x}$ (and in fact one can find equivalent equations for the \emph{distribution functions}), but moreover in a flow describing the dynamics of the \emph{frequency} and \emph{directional} properties of asymptotic oscillations. In the MCF system in Theorem~\ref{thm:MCF_system} this is expressed through the \enquote{frequency/directional test function} $\Psi$.
\end{remark}

We leave finer investigations into the propagation of singularities and compactness to future work. For now we only remark in closing that in the context of the above extended (MCF) system, the Compensated Compactness Theorem~\ref{thm:cc_general} allows one to prove statements about the \enquote{ellipticity} present in the system.




\providecommand{\bysame}{\leavevmode\hbox to3em{\hrulefill}\thinspace}
\providecommand{\MR}{\relax\ifhmode\unskip\space\fi MR }
\providecommand{\MRhref}[2]{%
  \href{http://www.ams.org/mathscinet-getitem?mr=#1}{#2}
}
\providecommand{\href}[2]{#2}

\end{document}